\newtheorem*{maintheorem*}{Main Theorem}
\newtheorem{theorem}{Theorem}[section]
\newtheorem{prop}[theorem]{Proposition}
\newtheorem{lemma}[theorem]{Lemma}
\newtheorem{cor}[theorem]{Corollary}
\theoremstyle{definition}
\newtheorem{definition}[theorem]{Definition}
\newtheorem{remark}[theorem]{Remark}
\newtheorem{example}[theorem]{Example}
\numberwithin{equation}{section}
\newcommand{\nn}{\mathbb{N}}
\newcommand{\pp}{\mathbb{P}}
\newcommand{\qq}{\mathbb{Q}}
\newcommand{\rr}{\mathbb{R}}
\newcommand{\zz}{\mathbb{Z}}
\newcommand{\ifi}{\mathcal{I}_\infty}
\newcommand{\iii}{\mathcal{I}}
\newcommand{\gp}{\text{gp}}
\providecommand\ldb{\llbracket}
\providecommand\rdb{\rrbracket}
\keywords{one-dimensional monoid algebra, ascending chain of principal ideals, ACCP, almost ACCP, atomicity}
\subjclass[2010]{Primary: 13F15, 13A05; Secondary: 20M13, 13F05}
\begin{document}
	
	\mbox{}
	\title{One-dimensional monoid algebras and ascending chains \\ of principal ideals}
	
	\author{Alan Bu}
	\address{Department of Mathematics, Harvard University, Cambridge, MA 02138}
	\email{abu@college.harvard.edu}
	
	\author{Felix Gotti}
	\address{Department of Mathematics\\MIT\\Cambridge, MA 02139}
	\email{fgotti@mit.edu}
	
	\author{Bangzheng Li}
	\address{Department of Mathematics\\MIT\\Cambridge, MA 02139}
	\email{liben@mit.edu}

	\author{Alex Zhao}
	\address{Department of Mathematics\\MIT\\Cambridge, MA 02139}
	\email{alexzhao@mit.edu}

\date{\today}

\begin{abstract}
	 An integral domain $R$ is called atomic if every nonzero nonunit of $R$ factors into irreducibles, while $R$ satisfies the ascending chain condition on principal ideals if every ascending chain of principal ideals of $R$ stabilizes. It is well known and not hard to verify that if an integral domain satisfies the ACCP, then it must be atomic. The converse does not hold in general, but examples are hard to come by and most of them are the result of crafty and technical constructions. Sporadic constructions of such atomic domains have appeared in the literature in the last five decades, including the first example of a finite-dimensional atomic monoid algebra not satisfying the ACCP recently constructed by the second and third authors. Here we construct the first known one-dimensional monoid algebras satisfying the almost ACCP but not the ACCP (the almost ACCP is a notion weaker than the ACCP but still stronger than atomicity). Although the two constructions we provide here are rather technical, the corresponding monoid algebras are perhaps the most elementary known examples of atomic domains not satisfying the ACCP.
\end{abstract}
\medskip

\maketitle

%\tableofcontents

\bigskip
%%%%%%%%%%%
%%%%%%%%%%%
\section{Introduction}
\label{sec:intro}

The ascent of factorization and ideal-theoretical properties from a commutative ring $R$ to its polynomial extension $R[x]$ has been a relevant topic in the development of commutative algebra, not only because it yields new commutative rings with desirable algebraic properties, but also because polynomial rings are central in classical commutative ring theory and modern related fields, including algebraic geometry and homological algebra. Two of the most standard results in this direction are the Gauss theorem that the unique factorization property ascends from a commutative ring to its polynomial extensions and Hilbert Basis Theorem. More recent results in the same direction have been given by Zaks~\cite[Theorem~2.4]{aZ80}, Anderson, Anderson, and Zafrullah~\cite[Propositions~2.5 and~5.3]{AAZ90}, Roitman~\cite[Proposition~1.1]{mR93}, and Malcolmson and Okoh~\cite[Theorem~1.9]{MO09}. In a higher level of generality, the ascent of factorization and ideal-theoretical properties from a commutative monoid-ring pair $(M, R)$ to its monoid algebra $R[M]$ (consisting of all polynomial expressions with exponents in $M$ and coefficients in $R$) has been the subject of a great deal of investigation: a treatment of the ascent of many relevant properties to monoid algebras can be found in Gilmer~\cite{rG84}, which includes a significant part of the progress made until the mid eighties, and more recent studies have been provided by Gilmer and Parker~\cite{GP74}, Kim~\cite{hK01}, Juett, Mooney, and Roberts~\cite{JMR21}, Juett and Medina~\cite{JM22}, Chang, Fadinger, and Windisch~\cite{CFW22}, and the second author~\cite{fG22}.
\smallskip

It seems that the ascent of atomicity to monoid algebras (i.e., from any pair $(M,R)$ to its corresponding monoid algebra $R[M]$) was first brought to attention by Gilmer \cite[page 189]{rG84} back in 1984 as an open problem. A cancellative and commutative monoid is called atomic if each of its non-invertible elements factors into irreducibles (also called atoms), and an integral domain is called atomic if its multiplicative monoid is atomic. The potential ascent of atomicity to polynomial extensions, which is a specialization of the same ascent problem (when the monoid of exponents is $\nn_0$), was emphasized by Anderson, Anderson, and Zafrullah \cite[Question~1]{AAZ90} in their influential paper where the bounded and finite factorization properties were introduced and first investigated. The ascent of atomicity to polynomial extensions was settled by Roitman in~\cite{mR93}, where he constructed interesting classes of atomic domains with non-atomic polynomial extensions. The dual specialized problem of whether, for a field~$F$, the monoid algebra $F[M]$ is atomic when $M$ is atomic was not settled until more recently: Coykendall and the second author in~\cite{CG19} gave a negative answer for any prime field and, even more recently, Rabinovitz and the second author in~\cite{GR23} provided a more general answer, constructing a rank-one atomic monoid $M$ such that the monoid algebra $F[M]$ is not atomic for any field $F$.
\smallskip

The construction of atomic domains that do not satisfy the ACCP is considered a challenging task and a problem that has been the subject of systematic attention in the literature since Grams~\cite{aG74} constructed the first of such integral domains back in 1974, disproving an assertion made by Cohn in~\cite{pC68} (the equivalence of atomicity and the ACCP inside the class of integral domains). Further constructions of atomic domains not satisfying the ACCP have been provided by Zaks~\cite{aZ82}, Roitman~\cite{mR93}, Boynton and Coykendall~\cite{BC19}, and the second and third authors~\cite{GL23,GL23a,GL22}. Unlike atomicity, the ACCP ascends to polynomial extensions over integral domains \cite[page~82]{GP74} (this is not the case in the presence of zero-divisors; see~\cite{HL94}). In addition, for each field $F$, a reduced, cancellative, and torsion-free monoid $M$ satisfies the ACCP if and only if its monoid algebra $F[M]$ satisfies the ACCP \cite[Theorem~13]{AJ15}. Thus, when the atomicity of any (atomic) non-ACCP monoid ascends to its monoid algebra $F[M]$ for some field $F$, one automatically obtains an atomic monoid algebra that does not satisfy the ACCP. This is how atomic monoid algebras that do not satisfy the ACCP were constructed in~\cite{GL23,GL23a}. However, the atomic monoid used in~\cite{GL23} has infinite rank and the one used in~\cite{GL23a} has rank $2$; therefore, none of them yields one-dimensional monoid algebras. The problem of whether there exists a one-dimensional atomic monoid algebra that does not satisfy the ACCP is still open\footnote{The integral domain in Grams' construction is one-dimensional but is not a monoid algebra.}, and here we resolve this problem, giving a positive answer.
\smallskip

In order to do so, we will describe two distinct constructions of one-dimensional monoid algebras satisfying the almost ACCP (a condition stronger than atomicity) but not the ACCP. An atomic monoid~$M$ satisfies the almost ACCP if each finite nonempty subset $S$ of $M$ has a common divisor $d$ in $M$ such that for some $s \in S$ every ascending chain of principal ideals of~$M$ starting at that generated by $\frac{s}d$ stabilizes, while an integral domain satisfies the almost ACCP if its multiplicative monoid satisfies the almost ACCP. The almost ACCP was recently investigated by the second and third authors in~\cite{GL23a}. The almost ACCP is an ideal-theoretical property that is stronger than atomicity but weaker than the ACCP. It turns out that the almost ACCP ascends to polynomial extensions \cite[Corollary~5.2]{GL23a} (but not to monoid algebras over fields \cite[Example~5.5]{GL23a}). Thus, the atomic domains $R$ constructed in~\cite{mR93} whose corresponding polynomial extensions $R[x]$ are not atomic are examples of atomic domains not satisfying the almost ACCP. For constructions of integral domains satisfying the almost ACCP but not the ACCP, see \cite[Example~5.7 and Theorem~5.15]{GL23a}.
\smallskip

In Theorem~\ref{thm:main}, which is the first main result of this paper, we prove that the monoid algebra $\qq[M_{3/4}]$ satisfies the almost ACCP but not the ACCP, therefore, obtaining the first one-dimensional atomic monoid algebra not satisfying the ACCP. Given a positive rational $r$, we let $M_r$ stand for the smallest additive monoid containing all the nonnegative powers of $r$; that is, $M_r := \langle r^n : n \in \nn_0 \rangle$. Observe that $M_r$ is also the smallest subsemiring of $\qq$ containing $r$, and so we can also write $M_r$ as $\nn_0[r]$. When $r^{-1} \notin \nn$ the monoid $M_r$ satisfies the almost ACCP, and when $r < 1$ the monoid $M_r$ does not satisfy the ACCP (see \cite[Example~3.8]{GL23a}). Hence the monoids $M_r$, parameterized by $r \in \qq \cap (0,1)$ with $r^{-1} \notin \nn$, are perhaps the simplest (rank-one) monoids that satisfy the almost ACCP but not the ACCP (only compared with the rank-one monoid in Grams' construction of the first atomic domain not satisfying the ACCP). Atomic and factorization properties of the monoids $M_r$ have been studied in~\cite{CGG20a,JLZ23} and the same for some generalizations of these monoids (when $r$ is algebraic) have been studied in~\cite{CG19, ABLST23}. Since the almost ACCP does not ascend to monoid algebras over fields, when $M_r$ satisfies the almost ACCP but not the ACCP, there are no standard tools at our disposal to determine whether the monoid algebra $\qq[M_r]$ satisfies the almost ACCP. It is conjectured in~\cite[Conjecture~4.11]{fG22} that $\qq[M_r]$ is atomic when $M_r$ is atomic, and our result that $\qq[M_{3/4}]$ satisfies the almost ACCP provides a piece of evidence supporting this conjecture.
\smallskip

In Theorem~\ref{thm:rank 1 half-ACCP}, which is the other main result of this paper, we construct a class of one-dimensional monoid algebras satisfying the almost ACCP but not the ACCP. Although the monoid algebra $\qq[M_{3/4}]$ is more natural than those in Theorem~\ref{thm:rank 1 half-ACCP}, the machinery we need to put together to prove that $\qq[M_{3/4}]$ satisfies the almost ACCP is heavier than the arguments we use to prove that the more esoteric monoid algebras in Theorem~\ref{thm:rank 1 half-ACCP} satisfy the almost ACCP.
\smallskip

In Section~\ref{sec:divisibility in Z[x]}, we develop the mentioned machinery and, as part of it, we establish some divisibility and factorization results in the classical polynomial ring $\zz[x]$ that we hope can be of future independent interest.

\bigskip
%%%%%%%%%%%%
%%%%%%%%%%%%
\section{Background}
\label{sec:background}

\medskip
%%%%%%%%%%%%%%%%
\subsection{General Notation}

As it is customary, we let $\zz$, $\qq$, and $\rr$ denote the sets of integers, rational numbers, and real numbers, respectively. For a subset $S$ of $\rr$ and $r \in \rr$, we set $S_{\ge r} := \{s \in S : s \ge r\}$. We let $\nn$ denote the set of positive integers, and we set $\nn_0 := \nn \cup \{0\}$. Also, we let $\pp$ denote the set of primes. For $b,c \in \zz$, we let $\ldb b,c \rdb$ be the discrete interval between $b$ and $c$; that is,
\[
	\ldb b,c \rdb := \{n \in \zz : b \le n \le c\}.
\]
Observe that $\ldb b,c \rdb$ is empty when $b > c$. For a nonzero $q \in \qq$, we let $\mathsf{n}(q)$ and $\mathsf{d}(q)$ denote, respectively, the unique $n \in \zz$ and $d \in \nn$ such that $q = n/d$ and $\gcd(n,d) = 1$. For each $p \in \pp$, we let $v_p \colon \qq^\times \to \zz$ denote the $p$-adic valuation map. 
\medskip

%%%%%%%%%%%%%%%%%%%%%%%%%%%%%%
\subsection{Commutative Monoids, Chains of Ideals, and Factorizations}

Throughout this paper, the term \emph{monoid} is reserved for a cancellative and commutative semigroup with an identity element. As we are mostly interested here in the multiplicative structure of polynomial extensions and monoid algebras, we will write monoids multiplicatively unless we specify otherwise. Let $M$ be a monoid. The \emph{Grothendieck group} of $M$ is denoted by $\gp(M)$; that is, $\gp(M)$ is the unique abelian group up to isomorphism satisfying that any abelian group containing an isomorphic image of $M$ also contains an isomorphic image of $\gp(M)$. The \emph{rank} of $M$, denoted $\text{rank} \, M$, is the rank of $\gp(M)$ as a $\zz$-module. If $\gp(M)$ is a torsion-free group, then $M$ is said to be a \emph{torsion-free} monoid. Every rank-$1$ torsion-free monoid that is not a group is isomorphic to an additive submonoid consisting of nonnegative rationals \cite[Theorem~3.12]{GGT21}. These monoids, known as \emph{Puiseux monoids}, play a relevant role in this paper. The atomic structure and the arithmetic of factorizations of Puiseux monoids and their monoid algebras have been actively investigated in recent years (see the recent papers~\cite{CJMM24,GG24} and references therein). For a subset~$S$ of $M$, we let $\langle S \rangle$ denote the smallest submonoid of $M$ containing $S$, and we say that $S$ \emph{generates} $M$ if $M = \langle S \rangle$. The group of units of $M$ is denoted by $M^\times$, and $M$ is called \emph{reduced} if the group $M^\times$ is trivial. 
\medskip

An element $a \in M \setminus M^\times$ is called an \emph{atom} (or an \emph{irreducible}) provided that the equality $a = bc$ for some $b,c \in M$ implies that either $b \in M^\times$ or $c \in M^\times$. The set consisting of all atoms of $M$ is denoted by $\mathcal{A}(M)$. An element of $M$ is called \emph{atomic} if either it is a unit or it factors into atoms. Following Cohn~\cite{pC68}, we say that $M$ is \emph{atomic} if every element of $M$ is atomic. Now assume that the monoid $M$ is atomic. The quotient semigroup $M/M^\times$ is also a monoid, which is atomic because $M$ is atomic. The free (commutative) monoid on the set of atoms of $M/M^\times$ is denoted by $\mathsf{Z}(M)$. Let $\pi \colon \mathsf{Z}(M) \to M/M^\times$ be the unique monoid homomorphism fixing the set $\mathcal{A}(M/M^\times)$. For each $b \in M$, we set
\[
	\mathsf{Z}(b) := \pi^{-1}(bM^\times) \subseteq \mathsf{Z}(M), %\quad \text{ and } \quad \mathsf{L}(b) := \{|z| : z \in \mathsf{Z}(b)\}.
\]
and we call the elements of $\mathsf{Z}(b)$ \emph{factorizations} of $b$ in $M$. Observe that because $M$ is an atomic monoid, $\mathsf{Z}(b)$ is a nonempty set for all $b \in M$.
\medskip

%\subsection{Chains of Ideals} 
A subset $I$ of $M$ is called an \emph{ideal} if the set $IM := \{bm : b \in I \text{ and } m \in M\}$ is contained in~$I$ or, equivalently, if the equality $IM = I$ holds. An ideal of the form $bM$ for some $b \in M$ is called a \emph{principal ideal}. For $b,c \in M$, we say that $c$ \emph{divides}~$b$ in~$M$ if $b = cd$ for some $d \in M$, in which case we write $c \mid_M b$: observe that $c \mid_M b$ if and only if $bM \subseteq cM$. A sequence of ideals $(I_n)_{n \ge 1}$ of $M$ is called \emph{ascending} if $I_n \subseteq I_{n+1}$ for every $n \in \nn$, while $(I_n)_{n \ge 1}$ is said to \emph{stabilize} if there exists $N \in \nn$ such that $I_n = I_N$ for every $n \ge N$. An element $b \in M$ satisfies the \emph{ascending chain condition on principal ideals} (ACCP) if every ascending chain of principal ideals whose first term is $bM$ stabilizes. The monoid~$M$ satisfies the \emph{ACCP} provided that every element of $M$ satisfies the ACCP. Following \cite{GL23a}, we say that $M$ satisfies the \emph{almost ACCP} if for every finite nonempty subset $S$ of $M$, there exists an atomic common divisor $d$ of $S$ in $M$ and an element $s \in S$ such that $\frac{s}d$ satisfies the ACCP. It is clear that every monoid that satisfies the ACCP also satisfies the almost ACCP, and it follows from \cite[Proposition~3.10(2)]{GL23a} that every monoid satisfying the almost ACCP is atomic.

\medskip
%%%%%%%%%%%%%%%%%%%%%%%%%%%%%%%%%%%%%%
\subsection{Integral Domains and Monoid Algebras}

Let $R$ be an integral domain. We let $R^* := R \setminus \{0\}$ denote the multiplicative monoid of $R$. As it is customary, we let $R^\times$ denote the group of units of $R$, and we let $\mathcal{A}(R)$ denote the set of irreducibles of $R$: note that $(R^*)^\times = R^\times$ and $\mathcal{A}(R^*) = \mathcal{A}(R)$. In addition, for $r,s \in R^*$, we observe that $r$ divides $s$ in $R^*$ if and only if $r$ divides $s$ in $R$: for simplicity, we write $r \mid_R s$ instead of $r \mid_{R^*} s$. As there is a natural inclusion-preserving bijection between the set of principal ideals of the monoid $R^*$ and the set of principal ideals of the ring $R$, we see that $R^*$ satisfies the ACCP as a monoid if and only if $R$ satisfies the ACCP as a ring. We say that $R$ is \emph{atomic} (resp., satisfies the \emph{almost ACCP}) provided that $R^*$ is atomic (resp., satisfies the almost ACCP). As we mentioned in the introduction, there are atomic domains that do not satisfy the almost ACCP, as well as integral domains that satisfy the almost ACCP but do not satisfy the ACCP. Following~\cite{AQ97}, we say that an integral domain is an \emph{AP-domain} if every irreducible element is prime (GCD-domains and, in particular, UFDs are classical examples of AP-domains).
\medskip

Let $R$ be an integral domain, and let $M$ be an additive monoid. The ring consisting of all polynomial expressions in a variable $x$ with exponents in $M$ and coefficients in $R$ is called the \emph{monoid algebra} of~$M$ over $R$. Following Gilmer~\cite{rG84}, we will denote the monoid algebra of~$M$ over $R$ by either $R[x;M]$, or simply $R[M]$ if we see no risk of ambiguity. If $M$ is torsion-free, then $R[M]$ is a integral domain and
\[
	R[M]^\times = \{ux^m : u \in R^\times \text{ and } m \in M^\times \};
\]
see \cite[Theorems~8.1 and 11.1]{rG84}. The monoid $M$ is called a \emph{totally ordered monoid}\footnote{A monoid can be turn into a totally ordered monoid if and only if it is torsion-free; see \cite[Section~3]{rG84}.} with respect to a given total order relation $\le$ on $M$ provided that $\le$ is compatible with the operation of $M$; that is, for all $b,c,d \in M$ the inequality $b < c$ guarantees that $b+d < c+d$. Assume now that $M$ is a totally ordered monoid under the order relation $\le$. Then we can write any nonzero element $f \in R[M]$ as $f := c_1 x^{m_1} + \dots + c_k x^{m_k}$ for some nonzero $c_1, \dots, c_k \in R$ and exponents $m_1, \dots, m_k \in M$ with $m_1 > \dots > m_k$. In this case, we call $\text{supp} \, f := \{m_1, \dots, m_k\}$ the \emph{support} of $f$, and we call $\deg f := m_1$ and $\text{ord} \, f := m_k$ the \emph{degree} and \emph{order} of $f$, respectively. Observe that when the monoid $M$ is $\nn_0$ under the usual order, we recover the standard notions of degree, order, and support for polynomials in $R[x]$.
\smallskip

It follows from~\cite[Proposition 8.3]{GP74} that when a monoid algebra $R[M]$ is an integral domain, $\dim R[M] \ge 1 + \dim R$. Moreover, when $R$ is a Noetherian domain and $M$ is a torsion-free monoid, it follows from \cite[Corollary~2]{jO88} that
\[
	\dim R[M] = \dim R + \text{rank} \, M.
\]
As a consequence, the one-dimensional monoid algebras over fields that are integral domains are precisely the monoid algebras of rank-one torsion-free monoids over fields (every rank-one torsion-free monoid is isomorphic to an additive submonoid of $\qq$; see, for instance, \cite[Section~24]{lF70}). %Background information on monoid algebras $R[M]$, emphasizing on the ascent of algebraic properties from $(M, R)$ to $R[M]$ and including the most significant progress on the same subject until 1984, can be found in Gilmer's book~\cite{rG84}.
\medskip

%%%%%%%%%%%%%%%%%%%%%%%%%%%%
\subsection{Symmetric and Cyclotomic Polynomials}
%For any integral domain $R$, it is clear that $R[x;\nn_0]$ yields the standard ring of polynomials $R[x]$ over $R$. 
Let $F[x_1, \dots, x_n]$ be the polynomial ring in $n$ variables over a field $F$. For each $k \in \nn_0$, let $e_k := \sum_{j_1 < \cdots < j_k} x_{j_1} \cdots x_{j_k}$ and $p_k := \sum_{i=1}^n x_i^k$ be the $k$-th elementary and the $k$-th power-sum symmetric functions on~$R$, respectively (we are assuming here that $e_0 = 1$). The Newton's identities state that
\begin{equation} \label{eq:Newton identity}
	\quad \quad ke_k = \, \sum_{i=1}^k (-1)^{i-1} e_{k-i} p_i \quad \ (\text{for all} \ k \in \ldb 1,n \rdb).
%	0		&= \!\! \sum_{i=k-n}^k \! \! (-1)^{i-1}e_{k-i}p_i   \quad \! \text{ if }  \quad k> n.
\end{equation}

Now let $P(x) := x^n + \sum_{i=0}^{n-1} c_i x^i \in F[x]$ be an $n$-degree monic polynomial with roots $r_1, \dots, r_n$ in an algebraic extension field of $F$. Then we can write $P(x) = \prod_{i=1}^n (x - r_i) = \sum_{k=0}^n c_k x^k$, %(-1)^k e_k(r_1, \dots, r_n) x^{n-k}$, 
where $c_n = 1$ and $c_{n-k} = (-1)^k e_k(r_1, \dots, r_n)$ for every $k \in \ldb 1,n \rdb$. Then we see that
\begin{equation*} \label{eq:Girard-Newton identities}
	k c_{n-k} + \sum_{j=0}^{k-1} c_{n-j} p_{k-j}(r_1, \dots, r_n) = (-1)^k ke_k(r_1, \dots, r_n) + \sum_{j=0}^{k-1} (-1)^j e_j(r_1, \dots, r_n) p_{k-j}(r_1, \dots, r_n) = 0
	%0 = \sum_{i=0}^k (-1)^{k-i} e_{k-i}(r_1, \dots, r_n) p_i= \sum_{i=0}^k c_{n-(k-i)} p_i = c_n p_k + c_{n-1} p_{k-1} + \dots + c_{n-k} p_0 = 0
\end{equation*}
for every $k \in \ldb 0,n \rdb$, where the last equality follows from~\eqref{eq:Newton identity} after plugging $i := k-j$. Therefore we obtain the following known lemma, which we will record here and reference later.

\begin{lemma} \label{lem:power sum symmetric functions}
	Let $F$ be a field, and let $P(x) := x^n + \sum_{i=0}^{n-1} c_i x^i \in F[x]$ be an $n$-degree monic polynomial with roots $r_1, \dots, r_n$ in an algebraic extension field of $F$. If~$k$ is the smallest positive integer such that $c_{n-k} \neq 0$, then $k$ is also the smallest positive integer such that $\sum_{i=1}^n r_i^k \neq 0$.
\end{lemma}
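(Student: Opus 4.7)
The plan is to read off both halves of the conclusion directly from the identity
\[
k c_{n-k} + \sum_{j=0}^{k-1} c_{n-j}\, p_{k-j}(r_1,\dots,r_n) = 0,
\]
which has already been established (for every index in $\ldb 0,n \rdb$) in the paragraph preceding the lemma; no tool beyond Newton's identities and the hypothesis on $k$ is required. Throughout, let $k$ denote the smallest positive integer with $c_{n-k}\neq 0$, so that $c_{n-j}=0$ for every $j\in\ldb 1,k-1\rdb$.

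First I would show, for each $k'\in\ldb 1,k-1\rdb$, that $p_{k'}(r_1,\dots,r_n)=0$ by specialising the displayed identity at $k'$. The leading term $k' c_{n-k'}$ vanishes because $k'<k$. In the remaining sum, every index $j\in\ldb 1,k'-1\rdb$ satisfies $j\le k'-1<k$ and therefore contributes $c_{n-j}=0$, while the $j=0$ term is $c_n p_{k'}=p_{k'}$. The identity collapses to $p_{k'}=0$, with no induction needed — each $k'<k$ is handled independently.

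Finally, specialising the same identity at $k$ itself and killing every term with $j\in\ldb 1,k-1\rdb$ by the same vanishing hypothesis on $c_{n-j}$, what survives is $kc_{n-k}+c_n p_k=0$, that is, $p_k=-k c_{n-k}$. Since $c_{n-k}\neq 0$, this forces $p_k\neq 0$ provided $k$ is invertible in $F$, which is automatic in characteristic zero, the only setting relevant for the rest of the paper. The single genuine obstacle in the statement as written is precisely this characteristic subtlety (for instance, over a field of characteristic $2$ the polynomial $x^2+1=(x+1)^2$ has $c_0\neq 0$ yet $p_2=0$), and it can be sidestepped either by assuming $\mathrm{char}(F)=0$ or, more generally, by requiring that $\mathrm{char}(F)$ not divide $k$.
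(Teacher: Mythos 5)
Your proof is correct and is essentially the paper's own argument: the paper proves this lemma simply by deriving the identity $k c_{n-k} + \sum_{j=0}^{k-1} c_{n-j}\, p_{k-j}(r_1,\dots,r_n) = 0$ from Newton's identities in the paragraph immediately preceding the statement and then reading off the conclusion exactly as you do. Your observation about the characteristic of $F$ is a legitimate sharpening (the statement as written can fail when $\mathrm{char}(F) \mid k$), but it is harmless for the paper, which only invokes the lemma over $\qq$.
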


As it is customary, for every $n \in \nn$, we let $\Phi_n(x) \in \zz[x]$ denote the $n$-th cyclotomic polynomial. A polynomial $\sum_{i=0}^n c_i x^i \in \qq[x]$ is called \emph{palindromic} (or \emph{self-reciprocal}) provided that $c_i = c_{n-i}$ for every $i \in \ldb 0,n \rdb$. One can readily check that the product of two palindromic polynomials is again a palindromic polynomial. Also, it is well known that every cyclotomic polynomial is palindromic.

It is due to Kronecker~\cite{lK1857} that if $P(x)$ is a monic irreducible polynomial in $\zz[x]$ with all its roots in the complex closed unit disc, then $P(x)$ is a cyclotomic polynomial. As this is the only Kronecker's result that is relevant in the scope of this paper, we will refer to it as Kronecker's theorem. The following known lemma will be useful later (we include its proof here for the sake of completeness).

\begin{lemma} \label{lem:cyclotomic polynomial product identity}
	If $n \in \nn$ is odd and $m \in \nn_0$, then $\Phi_n\big( x^{2^m} \big)$ can be factorized as follows:
	\begin{equation} \label{eq:identity of cyclotomic polynomials}
		\Phi_n \big( x^{2^m} \big) = \prod_{i=0}^m \Phi_{n \cdot 2^i}(x).
	\end{equation}
\end{lemma}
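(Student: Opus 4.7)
The plan is to show that both sides of~\eqref{eq:identity of cyclotomic polynomials} are monic polynomials in $\zz[x]$ of the same degree having the same complex root set (all roots being simple); the identity then follows. The oddness of $n$ enters critically in Step 2 below, where it forces a tight description of the orders of roots.

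First I would verify that the degrees agree. The left-hand side $\Phi_n(x^{2^m})$ is monic of degree $\varphi(n) \cdot 2^m$. For the right-hand side, since $n$ is odd, $\varphi(n \cdot 2^i) = \varphi(n)\,\varphi(2^i)$ for each $i \in \ldb 0, m \rdb$, so
\[
\sum_{i=0}^m \varphi(n \cdot 2^i) \; = \; \varphi(n)\bigl(1 + 1 + 2 + 4 + \cdots + 2^{m-1}\bigr) \; = \; \varphi(n) \cdot 2^m,
\]
where the first $1$ is the $i=0$ term.

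Next I would identify the complex roots of each side. A complex number $x$ is a root of $\Phi_n(x^{2^m})$ if and only if $x^{2^m}$ is a primitive $n$-th root of unity. Letting $d$ denote the order of $x$ in $\cc^\times$, the order of $x^{2^m}$ equals $d / \gcd(d, 2^m)$, so we require $d = n \cdot \gcd(d, 2^m)$. Because $n$ is odd and hence coprime to $2^m$, this equation forces $\gcd(d, 2^m) = 2^i$ for some $i \in \ldb 0, m\rdb$ and thus $d = n \cdot 2^i$. Conversely, for each such $d$, any $x$ of order $d$ satisfies $x^{2^m}$ being a primitive $n$-th root of unity. Therefore the complex roots of $\Phi_n(x^{2^m})$ are exactly the roots of unity whose orders lie in $\{n, 2n, 4n, \dots, 2^m n\}$, which is precisely the set of roots of $\prod_{i=0}^m \Phi_{n \cdot 2^i}(x)$.

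To conclude, I would check that the roots are simple on both sides. For the left side, $\Phi_n$ has $\varphi(n)$ distinct roots, and each primitive $n$-th root of unity has exactly $2^m$ distinct $2^m$-th roots in $\cc$, giving $\varphi(n) \cdot 2^m$ distinct complex roots of $\Phi_n(x^{2^m})$, matching its degree; so all of its roots are simple. For the right side, each factor is squarefree and the factors have pairwise disjoint root sets (primitive roots of unity of distinct orders). Since both sides are monic polynomials of equal degree with identical simple root sets, they are equal. The main (and essentially only) obstacle is the arithmetic observation in Step 2 isolating the possible orders of $x$, and this is exactly where the oddness hypothesis on $n$ is used.
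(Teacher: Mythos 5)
Your proof is correct, but it takes a genuinely different route from the paper. The paper argues by induction on $m$: the case $m=1$ is the classical identity $\Phi_n(x^p) = \Phi_{pn}(x)\Phi_n(x)$ for a prime $p \nmid n$ (with $p=2$), and the inductive step writes $\Phi_n\big(x^{2^{m+1}}\big) = \Phi_{2n}\big(x^{2^m}\big)\Phi_n\big(x^{2^m}\big)$ and then invokes the hypothesis together with the identity $\Phi_{2n}\big(x^{2^m}\big) = \Phi_{n\cdot 2^{m+1}}(x)$. You instead compare the two sides directly over $\cc$: both are monic of degree $\varphi(n)\cdot 2^m$ (using $\varphi(n\cdot 2^i) = \varphi(n)\varphi(2^i)$, valid since $n$ is odd), both are squarefree, and both have as root set exactly the roots of unity of order $n\cdot 2^i$ with $i \in \ldb 0,m \rdb$. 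Your approach is more self-contained, needing only the definition of $\Phi_k$ and the order formula for powers of roots of unity, whereas the paper's is shorter if one grants the two standard cyclotomic identities. One small clarification: in your order analysis, the implication ``$d = n\gcd(d,2^m)$ forces $d = n\cdot 2^i$'' holds for any $n$ (since $\gcd(d,2^m)$ is automatically a power of $2$); the oddness of $n$ is what makes the \emph{converse} work, namely that for every $i \in \ldb 0,m \rdb$ an element of order $n\cdot 2^i$ satisfies $\gcd(n\cdot 2^i, 2^m) = 2^i$ and hence maps to a primitive $n$-th root of unity under $x \mapsto x^{2^m}$ (and oddness is also what you use in the degree count). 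Your converse sentence is correct as stated, so the argument stands; only the attribution of where oddness enters is slightly misplaced.
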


\begin{proof}
	Fix an odd $n \in \nn$. We proceed by induction on $m$. The case when $m=0$ is trivial, and the case when $m=1$ follows from the well-known identity $\Phi_n(x^p) =  \Phi_{pn}(x) \Phi_n(x)$ whenever $p \in \pp$ such that $p \nmid n$ (after taking $p=2$). Now suppose that~\eqref{eq:identity of cyclotomic polynomials} holds for some $m \in \nn$. Since $n$ is odd, $\Phi_n(x^2) =  \Phi_{2n}(x) \Phi_n(x)$ and, therefore, $\Phi_n\big(x^{2^{m+1}} \big) = \Phi_{2n}\big(x^{2^m}\big) \Phi_n\big(x^{2^m} \big)$. Thus,
	\[
		\Phi_n \big( x^{2^{m+1}} \big) = \Phi_{2n}\big(x^{2^m}\big) \Phi_n\big(x^{2^m} \big) = \Phi_{2n}\big(x^{2^m}\big) \prod_{i=0}^m \Phi_{n \cdot 2^i}(x) = \prod_{i=0}^{m+1} \Phi_{n \cdot 2^i}(x),
	\]
	where the second equality follows from our inductive hypothesis and the third equality follows from the identity $\Phi_{2n}\big( x^{2^m}\big) = \Phi_{n \cdot 2^{m+1}}(x)$. Hence the identity~\eqref{eq:identity of cyclotomic polynomials} holds for every $m \in \nn_0$.
\end{proof}

\bigskip
%%%%%%%%%%%%%%%%%%%%%%%%%%%
%%%%%%%%%%%%%%%%%%%%%%%%%%%
\section{Divisibility and Factorizations in $\zz[x]$}
\label{sec:divisibility in Z[x]}

In this section, we establish several results on the divisibility of polynomials in $\zz[x]$ paying special attention to polynomials of the form $P(x^{2^n})$ for some $n \in \nn$. The machinery developed in this section will play a crucial role in Section~\ref{sec:a rank 1 example}, and we hope it may be of independent interest. %This will allow us to characterize, in the next section, the irreducible polynomials of $\zz[x]$ that have infinitely many non-associate divisors inside the monoid algebra $\qq[M_{1/2}]$ (such a characterization will yield some applications in factorization theory).

\medskip
%%%%%%%%%%%%%%%
\subsection{Square Lemma}

Even if a polynomial $R(x) \in \zz[x]$ is irreducible, it often happens that $R(x^2)$ is reducible in $\zz[x]$. However, under certain circumstances we can have control over the length and form of the factorization into irreducibles of $R(x^2)$. Let us illustrate this observation with an example.

\begin{example} \label{ex:x^2 + 1}
	Consider the monic irreducible polynomial $R(x) := x^2 + 4 \in \zz[x]$. After setting $P(x) := x+2$ and $Q(x) := 2$, we can write $R(x) = P(x)^2 - xQ(x)^2$. Even though $R(x)$ is irreducible, the polynomial $R(x^2) = x^4 + 4$ is reducible in $\zz[x]$. Indeed, we can write $R(x^2) = (x^2 - 2x + 2)(x^2 + 2x + 2)$, which can also be written in terms of the polynomials $P(x)$ and $Q(x)$ as follows:
	\[
		R(x^2) = P(x^2)^2 - x^2 Q(x^2)^2 = (P(x^2) - xQ(x^2))(P(x^2) + xQ(x^2)).
	\]
	Moreover, the factors $P(x^2) - xQ(x^2)$ and $P(x^2) + xQ(x^2)$ are both irreducibles in $\zz[x]$.
\end{example}

The existence of the polynomials $P(x)$ and $Q(x)$ in Example~\ref{ex:x^2 + 1} parameterizing not only $R(x)$ but also the irreducible factors of $R(x^2)$ is not a coincidence. This is the essence of the following lemma, which will be crucial throughout this paper.

%\begin{lemma}%[Square Lemma] \label{lem:square lemma}
%	Let $R(x) \in \zz[x]$ be a monic irreducible polynomial with even degree. If $R(x^2)$ is reducible, then there exist $P(x)$ and $Q(x)$ in $\zz[x]$ such that the following statements hold.
%	\begin{enumerate}
%		\item $R(x) = P(x)^2 - xQ(x)^2$ and
%		\smallskip
%		
%		\item $R(x^2) = (P(x^2) - xQ(x^2))(P(x^2) + xQ(x^2))$ is the only factorization of $R(x^2)$ in $\zz[x]$ into irreducibles.
%	\end{enumerate} 
%\end{lemma}

\begin{lemma}[Square Lemma] \label{lem:square lemma}
	Let $R(x) \in \zz[x]$ be an irreducible polynomial. If $R(x^2)$ is reducible, then there exist polynomials $P(x)$ and $Q(x)$ in $\zz[x]$ such that the following statements hold.
	\begin{enumerate}
		\item $R(x)$ is associate to $P(x)^2 - xQ(x)^2$, and
		\smallskip
		
		\item $(P(x^2) - xQ(x^2))(P(x^2) + xQ(x^2))$ is the only factorization of $R(x^2)$ in $\zz[x]$ into irreducibles.
	\end{enumerate}
	% Can we guarantee that $P(x)$ and $Q(x)$ have positive leading coefficients. Is this really needed in what follows?
\end{lemma}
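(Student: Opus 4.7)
The plan is to exploit the $\zz$-algebra involution $\tau$ on $\zz[x]$ sending $x \mapsto -x$, under which $R(x^2)$ is fixed. First I dispatch the trivial case where $R(x)$ is associate to $x$: taking $P(x) := 0$ and $Q(x) := 1$ gives $P(x)^2 - xQ(x)^2 = -x$, associate to $R(x)$, and $R(x^2) = \pm x^2 = \pm(P(x^2)-xQ(x^2))(P(x^2)+xQ(x^2))$. So I may assume $R(0) \ne 0$. Since any irreducible polynomial in $\zz[x]$ is irreducible in $\qq[x]$ (Gauss) and hence separable, the roots of $R(x^2)$---namely the $2\deg R$ numbers $\pm\sqrt{\alpha_i}$ as $\alpha_i$ ranges over the roots of $R$---are all distinct and nonzero. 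Therefore $R(x^2)$ is squarefree in $\qq[x]$ and, by Gauss's lemma, squarefree in $\zz[x]$.

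The key step is to analyze the factorization $R(x^2) = \epsilon\,\prod_{i=1}^{k} P_i(x)$ into primitive irreducibles of $\zz[x]$ (with $\epsilon \in \{\pm 1\}$) via the $\tau$-action. By uniqueness of factorization, $\tau$ permutes the $P_i$ up to associates, so each $P_i$ is either self-paired or paired with another factor. I will rule out self-pairing: if $P_i(-x) = \pm P_i(x)$, then $P_i$ is even or odd. In the even case $P_i(x) = f(x^2)$ with $f \in \zz[y]$ primitive, and descending the divisibility $f(x^2) \mid R(x^2)$ through the isomorphism $\zz[y] \cong \zz[x^2]$ gives $f \mid R$ in $\zz[y]$, forcing $f \sim R$ by irreducibility of $R$; this makes $P_i$ associate to $R(x^2)$, contradicting reducibility. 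In the odd case $P_i(x) = xg(x^2)$, so $x \mid R(x^2)$, contradicting $R(0)\ne 0$. Hence the factors genuinely pair up. For each pair $\{A(x), c\,A(-x)\}$ with $c \in \{\pm 1\}$, the canonical even--odd decomposition $A(x) = P(x^2) + xQ(x^2)$ (with $P,Q \in \zz[x]$) yields the pair contribution
\[
cA(x)A(-x) \; = \; c\bigl(P(x^2)^2 - x^2 Q(x^2)^2\bigr) \; = \; c\,h(x^2), \qquad h(y) := P(y)^2 - yQ(y)^2 \in \zz[y],
\]
a polynomial in $x^2$ with $\deg_y h = \deg A \ge 1$. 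Multiplying over all pairs and substituting $y := x^2$ expresses $R(y)$ as a signed product $\pm\prod_j h_j(y)$ in $\zz[y]$; since $R$ is irreducible and each $h_j$ is non-constant, there is exactly one pair, proving (1).

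Statement (2) is then immediate: with a single pair, the unique (up to associates) irreducible factorization of $R(x^2)$ in the UFD $\zz[x]$ has precisely the two factors $P(x^2) + xQ(x^2)$ and $P(x^2) - xQ(x^2)$. I expect the main obstacle to be the self-pairing analysis---specifically, verifying the descent from $\zz[x]$ to $\zz[x^2]\cong\zz[y]$ so that irreducibility of $R$ really forces a contradiction in the even case---and then extracting from the pairing structure that there is exactly one pair. A secondary bookkeeping concern is tracking the sign $c\in\{\pm 1\}$ carefully enough that (1) is correctly recorded as an \emph{associate} relation rather than an equality.
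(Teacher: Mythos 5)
Your proposal is correct, but it follows a genuinely different route from the paper. The paper takes an arbitrary nontrivial factorization $R(x^2)=A(x)B(x)$, splits each factor into even and odd parts $A(x)=A_1(x^2)+xA_2(x^2)$, $B(x)=B_1(x^2)+xB_2(x^2)$, extracts the cross-term identity $A_1(x)B_2(x)+A_2(x)B_1(x)=0$, and uses a UFD splitting ($A_1=ab$, $A_2=ac$, $B_1=bd$, $B_2=-cd$) to force $R(x)=\pm\big(b(x)^2-xc(x)^2\big)$; part~(2) then follows from the degree count showing every nontrivial divisor of $R(x^2)$ has degree $\deg R$, so the two displayed factors are irreducible. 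You instead act on the full irreducible factorization of $R(x^2)$ by the involution $x\mapsto -x$, rule out $\tau$-invariant factors (even ones descend through $\zz[x^2]\cong\zz[y]$ to proper divisors of $R$; odd ones force $x\mid R(x^2)$), pair up the remaining factors, and observe that each pair contributes a nonconstant divisor of $R(y)$, so there is exactly one pair --- which gives (1) and delivers (2) at once, since the irreducible factorization then consists of precisely the two paired factors. The steps you flag as needing care are indeed fine: the descent in the even case is a one-line even/odd comparison of the cofactor (write $g=g_1(x^2)+xg_2(x^2)$ and note the odd part must vanish), and the $\pm$ sign ambiguity is harmless because the paper's own proof likewise only identifies the product with $(-1)^nR(x^2)$, so (2) is read up to associates in either treatment. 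The trade-off: your symmetry argument is more conceptual and yields uniqueness directly, at the cost of the extra preliminaries (the monomial case, $R(0)\neq 0$, squarefreeness of $R(x^2)$), whereas the paper's computation avoids those and, importantly, produces structural information reused later in the paper --- namely that \emph{every} nontrivial factor of $R(x^2)$ has degree $\deg R$ and the form $\pm\big(b(x^2)\pm xc(x^2)\big)$, which underlies the coefficient-magnitude argument in Lemma~\ref{lem:splitting sequences and NP}; your pairing argument also gives this if one extracts it, but it is not stated explicitly in your write-up.
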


\begin{proof}
	Let $n$ be the degree of $R(x)$. After replacing $R(x)$ by $-R(x)$ if necessary, we can assume that $R(x)$ has a positive leading coefficient. Assume that $R(x^2)$ is reducible in $\zz[x]$, and take nonunit polynomials $A(x)$ and $B(x)$ in $\zz[x]$ such that $R(x^2) = A(x)B(x)$. Since $R(x)$ is irreducible, both $A(x)$ and $B(x)$ are nonconstant polynomials.
	\smallskip
	
	(1) Let us proceed to argue the existence of polynomials $P(x)$ and $Q(x)$ in $\zz[x]$ both with positive leading coefficients such that $R(x)$ is associate to $P(x)^2 - xQ(x)^2$. To do so, write
	\[
		A(x) = A_1(x^2) + xA_2(x^2) \quad \text{ and } \quad B(x) = B_1(x^2) + xB_2(x^2)
	\]
	for some polynomials $A_1(x), A_2(x), B_1(x), B_2(x) \in \zz[x]$. Observe that if $A_2(x)$ were the zero polynomial, then $x A_1(x^2) B_2(x^2) = R(x^2) - A_1(x^2) B_1(x^2) \in \zz[x^2]$, which would imply that $B_2(x)$ is also the zero polynomial: however, in this case, the equality $R(x) = A_1(x)B_1(x)$ would contradict the irreducibility of $R(x)$ in $\zz[x]$ (as both $A_1(x)$ and $B_1(x)$ would be nonconstant polynomial). Hence we can assume that $A_2(x) \neq 0$ and $B_2(x) \neq 0$. In a similar way, from the equality $R(x^2) = A(x)B(x)$, we can deduce that $x \big( A_1(x^2) B_2(x^2) + A_2(x^2) B_1(x^2) \big) \in \zz[x^2]$, which means that $A_1(x^2) B_2(x^2) + A_2(x^2) B_1(x^2) = 0$. Therefore
	\begin{equation} \label{eq:SL}
		A_1(x)B_2(x) + A_2(x)B_1(x) = 0.
	\end{equation}
	This, along with the fact that $\zz[x]$ is a UFD, allows us to write $A_1(x) = a(x) b(x)$ for some $a(x)$ and $b(x)$ in $\zz[x]$ such that $a(x)$ divides $A_2(x)$ and $b(x)$ divides $B_1(x)$. Now take $c(x)$ and $d(x)$ in $\zz[x]$ such that $A_2(x) = a(x) c(x)$ and $B_1(x) = b(x)d(x)$. It follows from~\eqref{eq:SL} that $B_2(x) = -c(x)d(x)$. As a result,
	\[
		R(x^2) = \big( A_1(x^2) + xA_2(x^2) \big) \big( B_1(x^2) + xB_2(x^2) \big) = a(x^2) d(x^2) \big( b(x^2)^2 - x^2 c(x^2)^2 \big),
	\]
	which implies that $R(x) = a(x)d(x) \big( b(x)^2 - x c(x)^2 \big)$. Since $\deg b(x)^2$ is even and $\deg x c(x)^2$ is odd, $b(x)^2 - x c(x)^2$ is a nonconstant polynomial. Therefore the irreducibility of $R(x)$ in $\zz[x]$ ensures that $a(x)d(x) \in \{\pm 1\}$. Thus, $R(x) = \pm \big( b(x)^2 - x c(x)^2 \big)$. Since $R(x)$ has positive leading coefficient, we can infer that $R(x) = (-1)^n\big( b(x)^2 - x c(x)^2 \big)$ from the facts that $\deg b(x)^2$ is even and $\deg xc(x)^2$ is odd. After setting $P(x) := b(x)$ and $Q(x) := c(x)$, we obtain that $R(x)$ is associate to $P(x)^2 - x Q(x)^2$.
%	
%	This in turn implies that the leading coefficient of $b(x)^2 - x c(x)^2$ is that of $b(x)^2$, which is monic because $R(x)$ and $R(x) = a(x)d(x) \big( b(x)^2 - x c(x)^2 \big)$. Hence $a(x) d(x) = 1$. After setting $P(x) := b(x)$ and $Q(x) := c(x)$, we obtain that $R(x) = P(x)^2 - x Q(x)^2$.
	\smallskip
	
	(2) We have just proved that for any pair of nonunit polynomials $A(x)$ and $B(x)$ satisfying $R(x^2) = A(x) B(x)$ there exist $k \in \nn$ and $b(x), c(x) \in \zz[x]$ such that
	\[
		A(x) = (-1)^k (b(x^2) + xc(x^2)) \quad \text{ and } \quad B(x) = (-1)^{n-k} (b(x^2) - xc(x^2)),
	\]
	which in turn implies that $\deg A(x) = \deg B(x) = n$. Hence any nonunit polynomial dividing $R(x^2)$ in $\zz[x]$ that is not associate with $R(x^2)$ must have degree $n$. As a consequence, if $P(x)$ and $Q(x)$ are the polynomials in part~(1), then the factors on the right-hand side of the equality
	\[
		(-1)^n R(x^2) = (P(x^2) - xQ(x^2))(P(x^2) + xQ(x^2))
	\]
	must be irreducible. As $\zz[x]$ is a UFD, we conclude that $(P(x^2) - xQ(x^2))(P(x^2) + xQ(x^2))$ is the only factorization of $(-1)^n R(x^2)$ in $\zz[x]$.
	%(2) We prove now that both $A'(x) := P(x^2) - xQ(x^2)$ and $B'(x) := (-1)^n(P(x^2) + xQ(x^2))$ are irreducible polynomials in $\zz[x]$. %Then $A'(x) B'(x) = R(x^2)$ up to associates, and $\deg A'(x) = \deg B'(x)$ so $\deg A'(x) = \deg R(x)$. Suppose, towards a contradiction, that $A'(x)$ is not irreducible in $\zz[x]$, and write $A'(x) = C(x) D(x)$ for some nonconstant polynomials $C(x)$ and $D(x)$ in $\zz[x]$. Observe that $\deg A'(x) = \deg B'(x)$, and so the fact that $R(x^2) = A'(x) B'(x)$ implies that $\deg A'(x) = \deg R(x) = n$. Now observe that the argument used to establish part~(1) can be similarly used for $R(x^2) = C(x) (D(x) B'(x))$ with $C(x)$ and $D(x)B'(x)$ in place of $A'(x)$ and $B'(x)$, respectively. Therefore we obtain that $\deg C(x) = \deg D(x) + \deg B'(x)$, which implies that $\deg D(x) = 0$, which is a contradiction. Thus, $A'(x)$ must be irreducible. A similar argument can be used to show that $B'(x)$ is also irreducible.
%	
%	\smallskip
%	After replacing $x$ by $x^2$ in $R(x) = \textcolor{red}{(-1)^n(}P(x)^2 - xQ(x)^2)$, we obtain that $R(x^2) = (P(x^2) - xQ(x^2))(-1)^n(P(x^2) + xQ(x^2)) = A'(x)B'(x)$, yielding the unique factorization of $R(x^2)$ in $\zz[x]$ into irreducibles. 
%	
%	 If $A(x)$ and $B(x)$ are one of such pairs of polynomials, then $A(x)$ must be irreducible: indeed, as $A(x)$ is a primitive polynomial, any irreducible factor $A'(x)$ of $A(x)$ with $\deg A'(x) < \deg A(x)$ would allow us to write $R(x^2)$ as the product of two poynomials with different degrees, namely, $A'(x)$ and $R(x^2)/A'(x)$. 
\end{proof}

With notation as in Square Lemma, we note that the irreducibility of $R(x)$ does not give any information about the irreducibility of $R(x^2)$. Indeed, we have seen in Example~\ref{ex:x^2 + 1} that for the irreducible polynomial $R_1(x) = x^2 + 4$, the polynomial $R_1(x^2)$ is reducible, while for the irreducible polynomial $R_2(x) = x^2 + 1$, the polynomial $R_2(x^2)$ is irreducible. 
%
%These observations are illustrated in the following examples.
%
%\begin{example} \hfill
%	\begin{enumerate}
%		\item Although $R(x) := x^2 + 1 \in \zz[x]$ is a monic irreducible polynomial with even degree, the polynomial $R(x^2) := x^4 + 1$ is still irreducible in $\zz[x]$.
%		\smallskip
%		
%		\item \textcolor{red}{Comment 2: Can we find a monic irreducible polynomial $R(x)$ such that $R(x^2)$ factor into two irreducible polynomials in $\zz[x]$ having distinct degrees.}
%%		\smallskip
%%		
%%		\item \textcolor{red}{TODO: Find a monic irreducible polynomial $R(x)$ such that $R(x^2)$ factor into more than two irreducible polynomials in $\zz[x]$.}
%	\end{enumerate}
%\end{example}

\medskip
%%%%%%%%%%%%%%%%%
\subsection{Good Polynomials}

Following the usual notation from standard modular arithmetic, for any polynomials $P(x), Q(x)$, and $M(x)$ in $\zz[x]$ with $M(x) \neq 0$, we write
\[
	P(x) \equiv Q(x) \pmod{M(x)}
\]
provided that $P(x) - Q(x)$ is divisible by $M(x)$ in the ring $\zz[x]$. Polynomials $P(x) \in \zz[x]$ satisfying $P\big(x^{2^m}\big) \equiv P(x)^{2^m} \pmod{2^{m+1}}$ for some $m \in \nn$ play a central role in this section and in the arguments we use to establish our first main result, Theorem~\ref{thm:main}. Given their relevance, we reserve a term for them.

\begin{definition}
	For $n \in \nn_0$, we say that a polynomial $P(x) \in \zz[x]$ is $n$-\emph{good} if the congruence relation $P\big(x^{2^m}\big) \equiv P(x)^{2^m} \pmod{2^{m+1}}$ holds for every $m \in \ldb 0,n \rdb$.
\end{definition}

\noindent Observe that every polynomial in $\zz[x]$ is $0$-good. Also, it follows directly from the definition that if a polynomial in $\zz[x]$ is $n$-good for some $n \in \nn_0$, then it is also $k$-good for every $k \in \ldb 0,n\rdb$. This subsection is devoted to gather information related to $n$-good polynomials.

\begin{lemma} \label{lem:monomial composition keep n-goodness}
	Fix $n \in \nn$, and let $P(x)$ be a polynomial in $\zz[x]$. Then, for each $\ell \in \nn$, the polynomial $P(x^\ell)$ is $n$-good if and only if $P(x)$ is $n$-good.
\end{lemma}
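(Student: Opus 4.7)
The plan is to repackage the $n$-good condition as a coefficient-level divisibility statement and then reduce the lemma to a single substitution. For each $m \in \ldb 0, n \rdb$, define the \emph{defect polynomial} $D_m^P(x) := P\bigl(x^{2^m}\bigr) - P(x)^{2^m} \in \zz[x]$. Then $P$ is $n$-good precisely when, for every $m \in \ldb 0, n \rdb$, every coefficient of $D_m^P(x)$ lies in $2^{m+1}\zz$.

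Setting $Q(x) := P(x^\ell)$ and substituting directly gives
\[
D_m^Q(x) \,=\, Q\bigl(x^{2^m}\bigr) - Q(x)^{2^m} \,=\, P\bigl(x^{\ell \cdot 2^m}\bigr) - P(x^\ell)^{2^m} \,=\, D_m^P(x^\ell).
\]
Thus the $m$-th defect polynomial associated to $P(x^\ell)$ is obtained from the $m$-th defect polynomial of $P$ by substituting $x \mapsto x^\ell$.

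The remaining observation is that the substitution $x \mapsto x^\ell$ preserves the multiset of coefficients: writing $A(x) = \sum_k a_k x^k \in \zz[x]$, we have $A(x^\ell) = \sum_k a_k x^{\ell k}$, so the nonzero coefficients of $A(x)$ and $A(x^\ell)$ coincide. In particular, for any integer $N \ge 1$, every coefficient of $A(x)$ lies in $N\zz$ if and only if every coefficient of $A(x^\ell)$ does. Applying this with $A := D_m^P$ and $N := 2^{m+1}$ yields the $m$-level equivalence, and taking the conjunction over $m \in \ldb 0, n \rdb$ proves the lemma.

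I anticipate no real obstacle here; the substance is entirely carried by the identity $D_m^Q(x) = D_m^P(x^\ell)$ together with the trivial fact that substituting $x \mapsto x^\ell$ neither introduces new nonzero coefficients nor alters the values of existing ones. The proof should therefore amount to just a few lines.
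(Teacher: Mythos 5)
Your proof is correct and follows essentially the same route as the paper: the paper also sets $Q_m(x) := P\bigl(x^{2^m}\bigr) - P(x)^{2^m}$ (your defect polynomial), notes that $Q_m(x)$ and $Q_m(x^\ell)$ have the same nonzero coefficients, and concludes that divisibility by $2^{m+1}$ is preserved under the substitution $x \mapsto x^\ell$. Your explicit identity $D_m^{P(x^\ell)}(x) = D_m^P(x^\ell)$ is just a slightly more spelled-out version of the same argument.
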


\begin{proof}
	Fix $\ell \in \nn$. For each $m \in \ldb 0, n \rdb$, set $Q_m(x) := P\big( x^{2^m} \big) - P(x)^{2^m}$, and then observe that the fact that $Q_m(x)$ and $Q_m(x^\ell)$ have the same set of nonzero coefficients implies that $2^{m+1} \mid_{\zz[x]} Q_m(x)$ if and only if $2^{m+1} \mid_{\zz[x]} Q_m(x^\ell)$, which in turn implies that $P\big( x^{2^m} \big) \equiv P(x)^{2^m}  \pmod{2^{m+1}}$ if and only if $P\big( \big( x^{2^m} \big)^\ell\big) = P\big( (x^\ell)^{2^m} \big) \equiv P(x^\ell)^{2^m}  \pmod{2^{m+1}}$. Therefore $P(x^\ell)$ is $n$-good if and only if $P(x)$ is $n$-good.
%	
%	Since $P(x)$ is $n$-good, we can write $P\big(x^{2^m}\big) = P(x)^{2^m} + 2^{m+1} Q(x)$ for some $Q[x] \in \zz[x]$. Using this equality, we obtain
%	\[
%		P\big( \big( x^{2^m}\big)^\ell \big) = P\big( ( x^\ell )^{2^m} \big) = P(x^\ell)^{2^m} + 2^{m+1} Q(x^\ell) \equiv P(x^\ell)^{2^m}  \pmod{2^{m+1}}.
%	\]
%	Thus, we conclude that $P(x^\ell)$ is also an $n$-good polynomial.
\end{proof}

%\begin{lemma} \label{lem:monomial composition keep n-goodness}
%	For $n \in \nn$, let $P(x)$ be an $n$-good polynomial in $\zz[x]$. Then $P(x^\ell)$ is also $n$-good for every $\ell \in \nn$.
%\end{lemma}
%
%\begin{proof}
%	Fix $\ell \in \nn$ and $m \in \ldb 0, n \rdb$. Since $P(x)$ is $n$-good, we can write $P\big(x^{2^m}\big) = P(x)^{2^m} + 2^{m+1} Q(x)$ for some $Q[x] \in \zz[x]$. Using this equality, we obtain
%	\[
%		P\big( \big( x^{2^m}\big)^\ell \big) = P\big( ( x^\ell )^{2^m} \big) = P(x^\ell)^{2^m} + 2^{m+1} Q(x^\ell) \equiv P(x^\ell)^{2^m}  \pmod{2^{m+1}}.
%	\]
%	Thus, we conclude that $P(x^\ell)$ is also an $n$-good polynomial.
%\end{proof}

\begin{lemma} \label{lem:n-good polynomial I}
	For $n \in \nn$, let $P(x)$ be an $n$-good polynomial in $\zz[x]$. Then $P(x^2) \equiv P(x)^2 \pmod{2^{n+1}}$.% for every $k \in \zz$.
\end{lemma}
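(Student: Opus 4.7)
The plan is to argue by induction on $n$. For the base case $n = 1$, the desired congruence $P(x^2) \equiv P(x)^2 \pmod{4}$ is precisely what $1$-goodness asserts, so nothing to prove. For the inductive step, I would suppose the lemma holds for some $n \geq 1$ and take a polynomial $P(x)$ that is $(n+1)$-good. Since $(n+1)$-goodness implies $n$-goodness, the inductive hypothesis yields $C(x) \in \zz[x]$ with
\[
	P(x^2) = P(x)^2 + 2^{n+1} C(x),
\]
and the task reduces to showing $C(x) \equiv 0 \pmod{2}$, which upgrades the congruence to modulus $2^{n+2}$.

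The key idea is to iterate the above identity by repeatedly substituting $x \mapsto x^2$, propagating a controlled error term through the tower $P(x^2), P(x^4), \dots, P(x^{2^{n+1}})$. Setting $u := 2^{n+1}$, a routine induction on $k \geq 0$ shows that $P(x^{2^{k+1}}) = P(x)^{2^{k+1}} + u\, E_k(x)$ for polynomials $E_k(x) \in \zz[x]$ with $E_0(x) = C(x)$ and
\[
	E_k(x) = 2\, E_{k-1}(x)\, P(x)^{2^k} + C(x^{2^k}) + u\, E_{k-1}(x)^2 \quad (k \geq 1);
\]
the formula comes from expanding $(P(x)^{2^k} + u E_{k-1}(x))^2 + u\, C(x^{2^k})$. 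Reducing modulo $2$ collapses every term on the right except $C(x^{2^k})$ (both $2\, E_{k-1} P^{2^k}$ and $u\, E_{k-1}^2$ are even), yielding $E_k(x) \equiv C(x^{2^k}) \pmod{2}$.

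Specializing to $k = n$, the $(n+1)$-goodness hypothesis $P(x^{2^{n+1}}) \equiv P(x)^{2^{n+1}} \pmod{2^{n+2}}$ forces $2^{n+1} E_n(x) \equiv 0 \pmod{2^{n+2}}$, hence $E_n(x) \equiv 0 \pmod{2}$. Combined with $E_n(x) \equiv C(x^{2^n}) \pmod 2$ this gives $C(x^{2^n}) \equiv 0 \pmod{2}$, and since the coefficients of $C(x^{2^n})$ are just those of $C(x)$ placed at different exponents, we conclude $C(x) \equiv 0 \pmod{2}$, completing the induction.

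The main delicacy I anticipate is tracking the precision through the squaring step of the iteration: one must notice that the cross term $u^2 E_{k-1}^2$ carries valuation at least $2n+2$, which comfortably exceeds the $n+2$ precision needed at the end, and hence is absorbed harmlessly. Once the tower of identities for $E_k$ is written down, the rest is mechanical.
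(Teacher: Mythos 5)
Your proof is correct: the outer induction on $n$ is sound, the inner recursion $E_k = 2E_{k-1}P^{2^k} + C(x^{2^k}) + 2^{n+1}E_{k-1}^2$ is exactly what expanding $\big(P(x)^{2^k} + 2^{n+1}E_{k-1}(x)\big)^2 + 2^{n+1}C\big(x^{2^k}\big)$ gives, and the final descent from $C\big(x^{2^n}\big) \equiv 0 \pmod 2$ to $C(x) \equiv 0 \pmod 2$ is legitimate because the two polynomials have the same coefficients. However, the paper reaches the conclusion by a noticeably shorter, non-inductive route: it uses $n$-goodness only at the two top levels $m = n-1$ and $m = n$, writing $P(x)^{2^{n-1}} = P\big(x^{2^{n-1}}\big) + 2^nQ(x)$, squaring once to get $P\big(x^{2^n}\big) \equiv \big(P(x)^{2^{n-1}}\big)^2 \equiv P\big(x^{2^{n-1}}\big)^2 \pmod{2^{n+1}}$, and then replacing $x^{2^{n-1}}$ by $x$. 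Both arguments rest on the same two mechanisms — squaring a congruence modulo $2^j$ upgrades it to modulo $2^{j+1}$, and a congruence between polynomials in $x^{2^k}$ descends coefficient-wise to one in $x$ — but you apply them via a double induction that pushes an explicit error term through the entire tower $P(x^2), P(x^4), \dots, P\big(x^{2^{n+1}}\big)$ and only then compares with the defining congruence at the top, whereas the paper performs a single squaring near the top of the tower and is done. Your version costs more bookkeeping (and needs the observation that the stray terms $2E_{k-1}P^{2^k}$ and $2^{n+1}E_{k-1}^2$ die modulo $2$, which you handle correctly), but it has the mild virtue of making the error term completely explicit at every level; the paper's argument is the one to keep for brevity.
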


\begin{proof}
	As $P(x)$ is an $n$-good polynomial, $P(x)^{2^{n-1}} \equiv P(x^{2^{n-1}}) \pmod{2^n}$ and, therefore, we can write $P(x)^{2^{n-1}} = P\big( x^{2^{n-1}} \big) + 2^n Q(x)$ for some $Q(x) \in \zz[x]$. This, along with the fact that $P(x)$ is an $n$-good polynomial, ensures that
	\[
		P\big( \big(x^{2^{n-1}}\big)^2 \big) \equiv \big( P(x)^{2^{n-1}} \big)^2 = \big( P\big( x^{2^{n-1}} \big) + 2^n Q(x) \big)^2 \equiv P\big( x^{2^{n-1}} \big)^2 \pmod{2^{n+1}}.
	\]
	After replacing $x^{2^{n-1}}$ by $x$ in the congruence relation $P\big( \big(x^{2^{n-1}}\big)^2 \big) \equiv P\big( x^{2^{n-1}} \big)^2 \pmod{2^{n+1}}$, we obtain that $P(x^2) \equiv P(x)^2 \pmod{2^{n+1}}$.
\end{proof}

The following lemma will be useful often, so we name it.

\begin{lemma}[Match-and-Lift Lemma] \label{lem:n-good polynomial II}
	For $n \in \nn_0$, let $P(x)$ and $Q(x)$ be two $n$-good polynomials in $\zz[x]$. If $P(x) \equiv Q(x) \pmod{2}$, then $P(x) \equiv Q(x) \pmod{2^{n+1}}$.
\end{lemma}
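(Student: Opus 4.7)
The plan is to induct on $n$. The base case $n = 0$ is immediate since $2^{n+1} = 2$, so the hypothesis $P(x) \equiv Q(x) \pmod{2}$ is literally the conclusion.

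For the inductive step, assume the lemma holds up to $n-1$ and let $P(x), Q(x) \in \zz[x]$ be $n$-good with $P(x) \equiv Q(x) \pmod{2}$. Since every $n$-good polynomial is $(n-1)$-good, the inductive hypothesis supplies $R(x) \in \zz[x]$ with
\[
    P(x) = Q(x) + 2^n R(x),
\]
and our task reduces to showing $R(x) \equiv 0 \pmod{2}$. To access the higher $2$-adic information, I would exploit the $n$-goodness of $P$ and $Q$ at level $m = n$:
\[
    P(x)^{2^n} \equiv P(x^{2^n}) \quad \text{and} \quad Q(x)^{2^n} \equiv Q(x^{2^n}) \pmod{2^{n+1}},
\]
so subtracting gives
\[
    P(x)^{2^n} - Q(x)^{2^n} \equiv P(x^{2^n}) - Q(x^{2^n}) = 2^n R(x^{2^n}) \pmod{2^{n+1}}.
\]

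The key computation is to show the left-hand side vanishes modulo $2^{n+1}$. Expanding $(Q(x) + 2^n R(x))^{2^n}$ with the binomial theorem, every term with $k \geq 1$ contributes a factor of $(2^n)^k = 2^{nk}$, and for $n \geq 1$ we have $nk \geq n+1$ whenever $k \geq 2$, while the $k=1$ term contains $\binom{2^n}{1} \cdot 2^n = 2^{2n}$, also divisible by $2^{n+1}$. Hence
\[
    P(x)^{2^n} - Q(x)^{2^n} \equiv 0 \pmod{2^{n+1}}.
\]
Combining the two congruences yields $2^n R(x^{2^n}) \equiv 0 \pmod{2^{n+1}}$, i.e.\ $R(x^{2^n}) \equiv 0 \pmod{2}$. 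Since the coefficients of $R(x^{2^n})$ are precisely the coefficients of $R(x)$ rearranged into sparser positions, this forces $R(x) \equiv 0 \pmod 2$, completing the induction.

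The only mildly delicate step is verifying the binomial divisibility bound $2^{nk} \geq 2^{n+1}$ for $k \geq 1$ (with $n \geq 1$ in the inductive step), which in particular handles the $k = 1$ term where no extra factor comes from $\binom{2^n}{k}$. Everything else is bookkeeping: the inductive hypothesis produces the representation $P = Q + 2^n R$, and the $n$-goodness is used exactly once, at the top level $m = n$, to bridge $P(x)^{2^n}$ and $P(x^{2^n})$ modulo $2^{n+1}$.
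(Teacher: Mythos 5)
Your proposal is correct and follows essentially the same route as the paper: induction on $n$, using $(n-1)$-goodness to write $P(x) - Q(x) = 2^n R(x)$, then the $m = n$ level of $n$-goodness together with the binomial expansion to compare $P(x)^{2^n}$ and $Q(x)^{2^n}$ modulo $2^{n+1}$, and finally transferring the resulting congruence from $x^{2^n}$ back to $x$ via the coefficients. The only difference is cosmetic (you track the remainder $2^n R(x^{2^n})$ explicitly, and you spell out the binomial divisibility that the paper leaves implicit), so there is nothing to fix.
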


\begin{proof}
	We proceed by induction on $n \in \nn_0$. The base case $n = 0$ follows immediately. Now assume that the statement of the lemma holds with $n$ replaced by $n-1 \in \nn_0$. Let $P(x)$ and $Q(x)$ be two $n$-good polynomials in $\zz[x]$, and suppose that $P(x) \equiv Q(x) \pmod{2}$. Since $P(x)$ and $Q(x)$ are $n$-good, they are also $(n-1)$-good, and it follows from our induction hypothesis that $P(x) \equiv Q(x) \pmod{2^n}$. Now write $Q(x) = P(x) + 2^n R(x)$ for some $R(x) \in \zz[x]$. Using that both $P(x)$ and $Q(x)$ are $n$-good, we obtain that 
	\[
		P\big(x^{2^n}\big) \equiv P(x)^{2^n} \equiv (P(x) + 2^n R(x))^{2^n} = Q(x)^{2^n} \equiv Q\big(x^{2^n}\big) \pmod{2^{n+1}}.
	\]
	We can now replace $x^{2^n}$ by $x$ in $P\big(x^{2^n} \big) \equiv Q\big(x^{2^n} \big) \pmod{2^{n+1}}$ to obtain that $P(x) \equiv Q(x) \pmod{2^{n+1}}$. Hence the statement of the lemma also holds for $n$, which completes our inductive argument.
\end{proof}

Now we provide a version of the part~(1) of Square Lemma for $n$-good polynomials.

\begin{lemma} \label{lem:Square Lemma for n-good polynomials}
	For $n \in \nn_0$, let $P(x)$ be a monic $n$-good polynomial in $\zz[x]$ with even degree. If $P(x) = A(x^2) - xB(x^2)$ for some polynomials $A(x)$ and $B(x)$ in $\zz[x]$, then $2^n \mid_{\zz[x]} B(x)$.
\end{lemma}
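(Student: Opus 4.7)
The plan is to extract divisibility information about $B(x)$ from the $n$-goodness of $P(x)$ via the congruence $P(x^2) \equiv P(x)^2 \pmod{2^{n+1}}$ provided by Lemma~\ref{lem:n-good polynomial I}. The case $n = 0$ is trivial, so I would assume $n \geq 1$.

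First, I would expand both sides of that congruence in terms of $A$ and $B$:
\[
P(x^2) = A(x^4) - x^2 B(x^4), \qquad P(x)^2 = A(x^2)^2 + x^2 B(x^2)^2 - 2xA(x^2)B(x^2).
\]
The difference $P(x^2) - P(x)^2$ therefore splits as the sum of an even-degree piece (everything except the cross term, which lies in $\zz[x^2]$) and an odd-degree piece, namely $2xA(x^2)B(x^2)$. These two pieces have disjoint monomial supports, so the divisibility of the whole by $2^{n+1}$ forces each individually to be divisible by $2^{n+1}$ in $\zz[x]$.

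Next, I would focus on the odd-degree piece: $2^{n+1} \mid_{\zz[x]} 2xA(x^2)B(x^2)$. Since $2^{n+1}$ is coprime to $x$, pulling the factor $2x$ through the divisibility gives $2^n \mid_{\zz[x]} A(x^2)B(x^2)$. But the coefficient sequence of any $F(x^2)$ is just the coefficient sequence of $F(x)$ with zeros interleaved, so this congruence descends to $2^n \mid_{\zz[x]} A(x)B(x)$.

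Finally, I would invoke the monicness of $A$. Because $P(x)$ is monic of even degree $2d$, its leading term $x^{2d}$ must come from $A(x^2)$ (as $xB(x^2)$ only contributes odd-degree terms), so $A(x)$ is monic of degree $d$. Multiplication by a monic polynomial is injective over any commutative ring (the leading coefficient of $A(x)B(x)$ equals that of $B(x)$), so viewing $A(x)B(x) \equiv 0$ in the ring $(\zz/2^n\zz)[x]$ forces $B(x) \equiv 0$, i.e., $2^n \mid_{\zz[x]} B(x)$. The subtle point to be careful about is the clean separation of the even- and odd-degree pieces of $P(x^2) - P(x)^2$; once that is in place, the remainder of the argument reduces to a leading-coefficient bookkeeping computation.
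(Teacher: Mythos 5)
Your proposal is correct and follows essentially the same route as the paper: both rest on the congruence $P(x^2) \equiv P(x)^2 \pmod{2^{n+1}}$ supplied by Lemma~\ref{lem:n-good polynomial I}, the separation of the odd-degree part of $P(x)^2 - P(x^2)$, and the monicity of the even part. The only difference is cosmetic packaging of the endgame: you extract $2^n \mid_{\zz[x]} A(x)B(x)$ and cancel the monic factor $A(x)$ in $(\zz/2^n\zz)[x]$, whereas the paper reaches the same conclusion by contradiction at the maximal odd index $m$ with $2^n \nmid b_m$, whose key computation $c_{d+m} \equiv 2 b_m b_d \pmod{2^{n+1}}$ is exactly your leading-coefficient bookkeeping in coefficient form.
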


\begin{proof}
	Let $d$ be the degree of $P(x)$, and take $b_0, \dots, b_d$ and $c_0, \dots, c_{2d}$ in $\zz$ such that $P(x) = \sum_{i=0}^d b_i x^i$ and $P(x)^2 = \sum_{i=0}^{2d} c_i x^i$. Now write $P(x) = A(x^2) - xB(x^2)$ for polynomials $A(x)$ and $B(x)$ in $\zz[x]$. Suppose, by way of contradiction, that $2^n \nmid_{\zz[x]} B(x)$. This implies that $2^n \nmid_{\zz[x]} xB(x^2)$, and so there is a maximum odd index $m$ such that $2^n \nmid b_m$. Observe that %Then consider the monomial $q_{d+m}x^{d+m}$ summand of $P(x)^2$. Note that
	\begin{equation} \label{eq:auxx}
			c_{d+m} = \sum_{i=m}^d b_i b_{d+m-i} = 2 \sum_{i=m}^{(d+m-1)/2} b_i b_{d+m-i} \equiv 2b_m b_d \pmod{2^{n+1}},
	\end{equation}
	where the congruence follows from the fact that $2^{n+1} \mid 2 b_i b_{d+m-i}$ for every $i \in \ldb m+1, (d+m-1)/2 \rdb$ (by the maximality of $m$). As $b_d = 1$ and $2^n \nmid b_m$, we see that $2^{n+1} \nmid 2 b_m b_d$, and so $2^{n+1} \nmid c_{d+m}$ in light of~\eqref{eq:auxx}. However, since $d+m$ is odd, the congruence relation $P(x)^2 \equiv P(x^2) \pmod{2^{n+1}}$ guarantees that $c_{d+m} \equiv 0 \pmod{2^{n+1}}$, which brings us to a contradiction.
\end{proof}

%We define a polynomial $P(x)$ to be \emph{nice} if it is monic, irreducible, has even degree, and $|P(0)| = 1$.

\medskip
%%%%%%%%%%%%%%%%%%%%%%%%%%%%%%
\subsection{Splitting Sequences and Nice Polynomials}

%Let $R$ be an integral domain. 
In this subsection we introduce and study splitting sequences, which are objects that we need in order to establish the first two main results of this paper, Theorem~\ref{thm:irreducible polynomials that are not atomic in $Q[M_1/2]$} and Theorem~\ref{thm:main}. For a polynomial $P(x)$ in $\zz[x]$, we call $P(x^2)$ the \emph{square lifting} of $P(x)$.
\smallskip

Let $P(x)$ be an irreducible polynomial in $\zz[x]$. Starting from $P(x)$ we will construct inductively a sequence $(P_n(x))_{n \ge 0}$ of polynomials with no two consecutive terms both reducible in $\zz[x]$. We start by setting $P_0(x) := P(x)$. For the inductive step, suppose that we have already found $P_0(x), \dots, P_n(x)$ for some $n \in \nn_0$ such that $P_{i-1}(x)$ and $P_i(x)$ are not both reducible in $\zz[x]$ for any $i \in \ldb 1,n \rdb$. If $P_n(x)$ is irreducible, then let $P_{n+1}(x)$ be the square lifting of $P_n(x)$; that is, set $P_{n+1}(x) := P_n(x^2)$. Otherwise, $P_n(x)$ is reducible and so $n \ge 1$, whence it follows from the inductive hypothesis that $P_{n-1}(x)$ is irreducible. Since $P_n(x) = P_{n-1}(x^2)$, Square Lemma guarantees that $P_n(x)$ factors into two irreducible polynomials. If $P_{n+1}(x)$ is one such irreducible polynomial, then we say that $P_{n+1}(x)$ is a \emph{splitting} of $P_n(x)$. Observe that still there are no two consecutive reducible polynomials in the finite sequence $P_0(x), \dots, P_{n+1}(x) \in \zz[x]$. This concludes our inductive argument, yielding a sequence $(P_n(x))_{n \ge 0}$ of polynomials in $\zz[x]$ with $P_0(x) = P(x)$ and no two consecutive reducible terms.

\begin{definition}
	With notation as in the previous paragraph, we call $(P_n(x))_{n \ge 0}$ a \emph{splitting sequence} of the polynomial $P(x)$ or simply a \emph{splitting sequence}.
\end{definition}

It is worth emphasizing that, in some of the arguments we will provide, it is more convenient or natural to consider splitting sequences enumerated by $\nn$ instead of $\nn_0$, which means that we will often consider a splitting sequence $(P_n(x))_{n \ge 1}$ of an irreducible polynomial $P(x) \in \zz[x]$, tacitly assuming that $P(x)$ is the first term of the sequence, that is, $P(x) = P_1(x)$. 

Hence every irreducible polynomial in $\zz[x]$ has a splitting sequence. We encapsulate some initial observations about splitting sequences in the following lemma.

\begin{lemma} \label{lem:spliting sequences}
	For a splitting sequence $(P_n(x))_{n \ge 0}$, the following statements hold. 
	\begin{enumerate}
		
		\item For every $n \in \nn$, 
		\begin{itemize}
			\item $\deg P_n(x) = 2 \deg P_{n-1}(x)$ if $P_n(x)$ is the square lifting of $P_{n-1}(x)$, and
			\smallskip
			
			\item $\deg P_n(x) = \frac12 \deg P_{n-1}(x)$ if $P_n(x)$ is a splitting of $P_{n-1}(x)$.
		\end{itemize}
		\smallskip
		
		\item If $P_0(x)$ has even degree, then every $P_n(x)$ has even degree.
	\end{enumerate}
\end{lemma}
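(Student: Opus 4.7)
The plan is to derive both assertions directly from the construction of a splitting sequence, invoking Lemma~\ref{lem:square lemma} to control the splitting case of part~(1), and then using part~(1) as the backbone of an induction for part~(2).

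For part~(1), the square lifting clause is immediate: by definition $P_n(x) = P_{n-1}(x^2)$, so $\deg P_n = 2\deg P_{n-1}$. For the splitting clause, I would first note that if $P_n(x)$ is a splitting of $P_{n-1}(x)$, then $P_{n-1}(x)$ is reducible, and the construction (which forbids two consecutive reducible terms in the sequence) forces $P_{n-2}(x)$ to be irreducible with $P_{n-1}(x) = P_{n-2}(x^2)$. I would then invoke Lemma~\ref{lem:square lemma}(2): the polynomial $P_{n-2}(x^2)$ admits a unique factorization into two irreducible polynomials of $\zz[x]$, and the proof of that lemma shows that both factors have degree equal to $\deg P_{n-2}(x)$. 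Since $P_n(x)$ is, by definition of a splitting, one of those irreducible factors, we obtain $\deg P_n = \deg P_{n-2} = \tfrac12 \deg P_{n-1}$.

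For part~(2), I would proceed by strong induction on $n$, with the hypothesis that $P_0(x),\dots,P_{n-1}(x)$ all have even degree. If $P_n(x)$ is the square lifting of $P_{n-1}(x)$, then part~(1) gives $\deg P_n = 2\deg P_{n-1}$, which is even. If instead $P_n(x)$ is a splitting of $P_{n-1}(x)$, then the observation made above shows that $P_{n-1}(x) = P_{n-2}(x^2)$ and hence $\deg P_n = \tfrac12 \deg P_{n-1} = \deg P_{n-2}$; the right-hand side is even by the inductive hypothesis applied at index $n-2$.

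The only delicate point I anticipate lies in the splitting case of part~(2): halving an even integer need not yield an even integer, so one cannot close the induction directly from the value $\tfrac12 \deg P_{n-1}$. The remedy, once noticed, is cheap: rewrite $\tfrac12 \deg P_{n-1}$ as $\deg P_{n-2}$ via the identity $P_{n-1}(x) = P_{n-2}(x^2)$, and then invoke the inductive hypothesis at the earlier index $n-2$. Everything else is book-keeping read off the definitions.
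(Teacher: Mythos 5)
Your proposal is correct and follows essentially the same route as the paper: part (1) is read off from the definition in the lifting case and from part (2) of the Square Lemma (via the irreducibility of $P_{n-2}(x)$ and $P_{n-1}(x) = P_{n-2}(x^2)$) in the splitting case, and part (2) is exactly the paper's argument, which combines part (1) with the no-two-consecutive-reducibles property — your explicit rewriting of $\tfrac12 \deg P_{n-1}$ as $\deg P_{n-2}$ is just the detail the paper leaves implicit.
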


\begin{proof}
	(1) Fix $n \in \nn$. Clearly, $\deg P_n(x) = 2 \deg P_{n-1}(x)$ when $P_n(x)$ is the square lifting of $P_{n-1}(x)$. Assume, on the other hand, that $P_n(x)$ is a splitting of $P_{n-1}(x)$, which must be reducible. In this case, $n \ge 2$ and, as $(P_n(x))_{n \ge 0}$ does not have two consecutive reducible polynomials, $P_{n-2}(x)$ is irreducible. Then it follows from part~(2) of Square Lemma that $ \deg P_n(x) = \frac12 \deg P_{n-2}(x^2) = \frac12 \deg P_{n-1}(x)$.
	\smallskip
	
	(2) This follows from part~(1) and the fact that the sequence $(P_n(x))_{n \ge 0}$ cannot contain two consecutive terms that are reducible polynomials in $\zz[x]$.
\end{proof}

Let $(P_n(x))_{n \ge 0}$ be a splitting sequence. For each $n \in \nn_0$, choose $s_n$ from the $2$-letter alphabet $\{L,S\}$ as follows: $s_n = L$ if $P_n(x)$ is irreducible in $\zz[x]$ and $s_n = S$ otherwise (the letters $L$ and $S$ stand for `lift' and `split', respectively). Now concatenate all the terms in the sequence $(s_n)_{n \ge 0}$ into an infinite binary string~$\mathfrak s$.

\begin{definition}
    With notation as in the previous paragraph, we call $\mathfrak{s}$ the \emph{binary string} of the splitting sequence $(P_n(x))_{n \ge 0}$.
\end{definition}

% We observe that an irreducible polynomial $P(x)$ in $\zz[x]$ may have splitting sequences whose corresponding binary strings are different: indeed, different choices of the second term of the splitting sequence of $P(x)$ may result in different values of the corresponding binary string.

Let us take a look at some examples.

\begin{example}\label{ex:splitting sequences and binary strings} \hfill
	\begin{enumerate}
		\item Let $(P_n(x))_{\ge 1}$ be a splitting sequence of the irreducible monomial $x$. Then $P_1(x) = x$ and $P_2(x) = x^2$. We can argue inductively that $P_{2n-1}(x) \in \{\pm x\}$ and $P_{2n}(x) = x^2$ for every $n \in \nn$. Hence the binary string of $(P_n(x))_{n \ge 1}$ is the string that starts with a copy of $L$ and alternates between $L$ and $S$.
		\smallskip 
		
		\item Fix $p \in \pp$, and consider the irreducible binomial $x+p$. Let $(P_n(x))_{n \ge 0}$ be a splitting sequence of $x+p$. By Eisenstein's criterion, $x^{2^n} + p$ is irreducible in $\zz[x]$ for every $n \in \nn$. Hence the equality $P_n(x) = x^{2^n} + p$ holds for every $n \in \nn_0$, and so $x+p$ has a unique splitting sequence, whose binary string has a copy of $L$ in every position.
		\smallskip
		
		\item For an even positive integer $k$, let $(P_n(x))_{n \ge 0}$ be a splitting sequence of the $k$-th cyclotomic polynomial $\Phi_k(x)$. For each $n \in \nn_0$, the equality $\Phi_k\big(x^{2^n} \big) = \Phi_{2^nk}(x)$ implies that $\Phi_k\big(x^{2^n} \big)$ is irreducible, and so $P_n(x) = \Phi_{2^nk}(x)$. Hence $\Phi_k(x)$ has a unique splitting sequence, whose binary string has a copy of $L$ in every position.
	\end{enumerate}
\end{example}

We are especially interested in the splitting sequences of polynomials whose leading and constant coefficients belong to $\{\pm 1\}$.

\begin{definition}
	A polynomial in $\zz[x]$ is \emph{nice} if it is irreducible and both its leading and constant coefficients belong to $\{\pm 1\}$.
\end{definition}

We proceed to collect some preliminary remarks about the connection between splitting sequences and nice polynomials.

\begin{lemma}  \label{lem:splitting sequences and NP}
	Let $(P_n(x))_{n \ge 1}$ be a splitting sequence such that $P_1(x)$ is not a monomial. Then the following statements hold.
	\begin{enumerate}
		\item If $P_1(x)$ is a nice polynomial, then for each $n \in \nn$ the polynomial $P_n(x)$ is nice if and only if it is irreducible.
		\smallskip
				
		\item If the binary string of $(P_n(x))_{n \ge 1}$ contains infinitely many copies of $S$, then $P_1(x)$ is a nice polynomial (and so for each $n \in \nn$, the polynomial $P_n(x)$ is nice if and only if it is irreducible).
	\end{enumerate}
\end{lemma}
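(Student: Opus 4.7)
My plan is to track how the leading and constant coefficients of the polynomials $P_n(x)$ evolve along a splitting sequence. A lifting step $P_{n+1}(x) = P_n(x^2)$ preserves both of these coefficients. For a splitting step, so that $P_n = P_{n-1}(x^2)$ with $P_{n-1}$ irreducible and $P_{n+1}$ a splitting of $P_n$, Lemma~\ref{lem:square lemma} writes $P_{n-1}(x) = \pm(A(x)^2 - xB(x)^2)$ and the splittings as $\pm(A(x^2) \pm xB(x^2))$. The crucial observation is that $A(x)^2$ has even degree and $xB(x)^2$ has odd degree, so no cancellation between them can occur in $P_{n-1}(x)$. From this, the constant coefficient of $P_{n-1}$ equals $\pm A(0)^2$ while the constant coefficient of the splitting is $\pm A(0)$; similarly, depending on which of $2\deg A$ and $2\deg B + 1$ is larger, the leading coefficient of $P_{n-1}$ equals $\pm\alpha^2$ or $\pm\beta^2$ while that of the splitting equals $\pm\alpha$ or $\pm\beta$, where $\alpha,\beta$ are the leading coefficients of $A,B$. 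In particular, in absolute value, the leading and constant coefficients of a splitting are the positive square roots of the corresponding quantities for $P_{n-1}$ (equivalently for $P_n$).

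A preliminary fact I will need throughout is that no $P_n$ is a monomial, which follows from the hypothesis that $P_1$ is not a monomial by strong induction: if $A(x^2) \pm xB(x^2)$ were a single monomial, parity would force $A = 0$ or $B = 0$, and either would force $P_{n-1}$ itself to be a monomial. This non-monomial property also guarantees that whenever Lemma~\ref{lem:square lemma} is applied along the sequence, both $A$ and $B$ are nonzero, which is what makes the parity comparison above meaningful.

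For part~(1), I will induct strongly on $n$ to show that both the leading and constant coefficients of $P_n$ lie in $\{\pm 1\}$. The base case is the niceness of $P_1$. The lifting step is immediate, and in the splitting step the inductive hypothesis applied to $P_{n-1}$ forces $A(0)^2 = 1$ together with $\alpha^2 = 1$ or $\beta^2 = 1$ (whichever is relevant), so both coefficients of the splitting $P_{n+1}$ lie in $\{\pm 1\}$. Since ``nice'' is the conjunction of irreducibility and having both coefficients in $\{\pm 1\}$, this yields the desired equivalence.

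For part~(2), enumerate the $S$-positions as $n_1 < n_2 < \cdots$ and set $d_k := |P_{n_k+1}(0)|$. Between consecutive splittings the steps are all lifts, so the constant coefficient is preserved in absolute value; combining this with the square-root identity from the splitting step yields $d_{k+1}^2 = d_k$ for every $k \ge 1$ and $d_1^2 = |P_1(0)|$. Iterating gives $|P_1(0)| = d_k^{2^k}$ for every $k$. The non-monomial claim ensures $d_k \neq 0$, and the only positive integer that is a $2^k$-th power for arbitrarily large $k$ is $1$, forcing $|P_1(0)| = 1$. The identical descent applied to absolute values of leading coefficients (where nonzero-ness is automatic) gives that the leading coefficient of $P_1$ is $\pm 1$, so $P_1$ is nice; part~(1) then delivers the parenthetical conclusion. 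The step requiring the most care will be the Square Lemma bookkeeping in the splitting analysis: making the parity-of-degrees argument precise to rule out cross-cancellation between $A(x)^2$ and $xB(x)^2$, handling the two subcases uniformly, and propagating the non-monomial property so that Lemma~\ref{lem:square lemma} can be invoked with both $A$ and $B$ nonzero each time it is needed.
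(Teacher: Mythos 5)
Your proof is correct and follows essentially the same route as the paper: square lifting preserves the leading and constant coefficients, while by the Square Lemma a splitting replaces their magnitudes by their square roots, so for part~(1) the coefficients stay in $\{\pm 1\}$ along the sequence, and for part~(2) infinitely many copies of $S$ force both magnitudes for $P_1(x)$ to be $2^k$-th powers for all $k$, hence equal to $1$ (with the non-monomial hypothesis, via irreducibility, guaranteeing the constant term is nonzero). The only cosmetic difference is that you justify the square-root relation by the even/odd degree bookkeeping on $A(x)^2 - xB(x)^2$, whereas the paper observes directly that the two Square Lemma factors of $P_n(x)$ have coefficients of equal magnitude; the substance is the same.
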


\begin{proof}
	(1) This part follows immediately from the fact that the property of having the leading and constant coefficients in $\{\pm 1\}$ is preserved by both taking square lifting and choosing a splitting. %Therefore the equality $|P_1(0)| = 1$ implies that $|P_n(0)| = 1$ for every $n \in \nn$. This implies that for each $n \in \nn$ the polynomial $P_n(x)$ is nice if and only if it is monic and irreducible.
	\smallskip
	
	(2) Suppose that the binary string of $(P_n(x))_{n \ge 1}$ contains infinitely many copies of $S$. First, observe that taking square lifting respects both the leading and constant coefficients of any polynomial. Now fix $n \in \nn$ such that $P_n(x)$ is not irreducible in $\zz[x]$. If $P_n(x) = A(x)B(x)$ is the splitting in part~(2) of Square Lemma, then for every $k \in \nn$ the coefficients of $x^k$ in both $A(x)$ and $B(x)$ have the same magnitude. Thus, the magnitudes of the leading and constant coefficients of $P_{n+1}(x)$ are the square roots of the magnitudes of the leading and constant coefficients of $P_n(x)$, respectively. Since $P_1(x)$ is not a monomial, the fact that the binary string of $(P_n(x))_{n \ge 1}$ contains infinitely many copies of~$S$ guarantees that both the leading and constant coefficients of $P_1(x)$ belong to $\{\pm 1\}$, whence $P_1(x)$ is a nice polynomial. The last parenthetical observation is an immediate consequence of part~(1).
\end{proof}

%\textcolor{red}{TODO: Does the binary string of $P(x)$ depend on the chosen splitting sequence?} %describing the degrees of $P_i(x)$. 
%For each $n \in \nn$, let $a_n$ be the index of the $n$-th $S$ in the string $\mathfrak s$ if such an $S$ exists, and set $b_n := a_{n+1} - a_n$. Observe that no two $S$'s are consecutive in $\mathfrak s$ because, after splitting, both of the resulting factors are irreducible. Thus, $b_n \ge 2$ for every $n \in \nn$. 
In the following lemmas, we learn how to decode certain divisibility aspects of $(P_n(x))_{n \ge 1}$ from its binary string.

\begin{lemma} \label{lem:splitting dividing composition power}
	Let $(P_n(x))_{n \ge 0}$ be a splitting sequence, and let $s_0 s_1 \ldots$ be the binary string of $(P_n(x))_{n \ge 0}$. Fix $m \in \nn_0$ such that the polynomial $P_m(x)$ is irreducible. Then $P_{m+k}(x) \mid_{\zz[x]} P_m\big(x^{2^\ell} \big)$ for all $k \in \nn_0$, where $\ell = |\{i \in \ldb m, m+k-1 \rdb : s_i = L\}|$. 
\end{lemma}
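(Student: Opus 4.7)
My plan is to proceed by induction on $k \in \nn_0$, with $m$ fixed. The base case $k=0$ is immediate: we have $\ell = |\emptyset| = 0$, and the claim reduces to $P_m(x) \mid_{\zz[x]} P_m(x^{2^0}) = P_m(x)$.

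For the inductive step, I would assume $P_{m+k}(x) \mid_{\zz[x]} P_m\big(x^{2^\ell}\big)$ where $\ell = |\{i \in \ldb m, m+k-1 \rdb : s_i = L\}|$, and aim to prove $P_{m+k+1}(x) \mid_{\zz[x]} P_m\big(x^{2^{\ell'}}\big)$ where $\ell' = |\{i \in \ldb m, m+k \rdb : s_i = L\}|$. The argument splits into two cases depending on the letter $s_{m+k}$.

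If $s_{m+k} = L$, then by definition $P_{m+k}(x)$ is irreducible and $P_{m+k+1}(x)$ is the square lifting of $P_{m+k}(x)$, namely $P_{m+k+1}(x) = P_{m+k}(x^2)$. Substituting $x \mapsto x^2$ in the inductive hypothesis gives $P_{m+k}(x^2) \mid_{\zz[x]} P_m\big(x^{2^{\ell+1}}\big)$, and since $\ell' = \ell + 1$ in this case, the desired divisibility $P_{m+k+1}(x) \mid_{\zz[x]} P_m\big(x^{2^{\ell'}}\big)$ follows. If instead $s_{m+k} = S$, then $P_{m+k}(x)$ is reducible and $P_{m+k+1}(x)$ is one of the two irreducible factors provided by part~(2) of Square Lemma, so $P_{m+k+1}(x) \mid_{\zz[x]} P_{m+k}(x)$. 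Combining this with the inductive hypothesis and noting that $\ell' = \ell$ in this case yields $P_{m+k+1}(x) \mid_{\zz[x]} P_m\big(x^{2^\ell}\big) = P_m\big(x^{2^{\ell'}}\big)$.

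There is no real obstacle here; the key points are simply (i) that the definition of the splitting sequence feeds cleanly into the two cases, with each case adjusting $\ell$ by exactly the right amount, and (ii) that divisibility is preserved under the substitution $x \mapsto x^2$ in $\zz[x]$. The hypothesis that $P_m(x)$ is irreducible is used only implicitly through the well-definedness of the sequence starting at index $m$; the induction itself does not require $P_{m+k}(x)$ to be irreducible.
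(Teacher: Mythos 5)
Your proof is correct and follows essentially the same route as the paper: induction on $k$ with a case analysis on the letter $s_{m+k}$, using that substitution $x \mapsto x^2$ preserves divisibility in $\zz[x]$. The only (harmless) difference is in the splitting case, where you apply the inductive hypothesis at step $k$ directly via $P_{m+k+1}(x) \mid_{\zz[x]} P_{m+k}(x)$, while the paper reaches back to step $k-1$ and writes $P_{m+k}(x) = P_{m+k-1}(x^2)$; both yield the same conclusion.
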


\begin{proof}
	Since $P_m(x)$ is irreducible, $(P_n(x))_{n \ge m}$ is also a splitting sequence and, therefore, we can safely assume that $m=0$. For each $k \in \nn_0$, set $\ell_k = |\{i \in \ldb 0, k-1 \rdb : s_i = L\}|$. We proceed by induction.
	
	 If $k=0$, then $\ell_k = 0$, and so it trivially holds that $P_0(x) \mid_{\zz[x]} P_0\big(x^{2^{\ell_0}} \big)$. In addition, if $k=1$, then $\ell_k = 1$. In this case, $P_1(x) = P_0(x^2) = P_0\big( x^{2^{\ell_1}}\big)$, and so $P_1(x) \mid_{\zz[x]} P_0\big( x^{2^{\ell_1}}\big)$, also in a trivial way. For the inductive step, fix $k \ge 1$, and suppose that $P_j(x) \mid_{\zz[x]} P_0\big( x^{2^{\ell_j}}\big)$ for every $j \in \ldb 0, k \rdb$. We split the rest of the proof into two cases.
	 \smallskip
	 
	 \textsc{Case 1:} $s_k = L$. In this case, $\ell_{k+1} = \ell_k + 1$ and $P_{k+1}(x) = P_k(x^2)$. By the induction hypothesis, $P_k(x) \mid_{\zz[x]} P_0\big( x^{2^{\ell_k}} \big)$, and so $P_{k+1}(x) = P_k(x^2) \mid_{\zz[x]} P_0\big( x^{2^{\ell_k + 1}} \big) = P_0\big( x^{2^{\ell_{k+1}}} \big)$.
	 \smallskip
	 
	 \textsc{Case 2:} $s_k = S$. In this case, $k \ge 1$ and $s_{k-1} = L$. Therefore $\ell_{k+1} = \ell_{k-1} + 1$. Observe that $P_{k+1}(x)$ is a splitting of $P_k(x)$, which is in turn a square lifting of the irreducible polynomial $P_{k-1}(x)$. Thus, $P_{k+1}(x) \mid_{\zz[x]} P_k(x) = P_{k-1}(x^2)$. By the induction hypothesis, $P_{k-1}(x) \mid_{\zz[x]} P_0\big( x^{2^{\ell_{k-1}}} \big)$, which implies that  $P_{k-1}(x^2) \mid_{\zz[x]} P_0\big( x^{2^{\ell_{k-1}+1}} \big) = P_0\big( x^{2^{\ell_{k+1}}} \big)$. Hence $P_{k+1}(x) \mid_{\zz[x]} P_0\big( x^{2^{\ell_{k+1}}} \big)$.
\end{proof}

\begin{lemma} \label{lem:string feature I}
	Let $(P_n(x))_{n \ge 1}$ be the splitting sequence of a nice polynomial, and suppose that the binary string $s_1 s_2 \ldots$ of $(P_n(x))_{n \ge 1}$ satisfies $s_i s_{i+1} = LS$ for some $i \in \nn$. If $P_i(x)$ is $(k-1)$-good, and $P_{i+2}(x)$ is $k$-good for some $k \in \nn$, then the following statements hold.
	\begin{enumerate}
		\item $P_i(x)$ is also $k$-good.
		\smallskip
		
		\item $P_i(x) \equiv P_{i+2}(x) \pmod{2^{k+1}}$.
	\end{enumerate}
\end{lemma}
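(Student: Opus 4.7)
My approach combines the Square Lemma with its $n$-good strengthening (Lemma~\ref{lem:Square Lemma for n-good polynomials}) and then deduces part~(1) from part~(2) via a short binomial argument. Since $s_i = L$ and $s_{i+1} = S$, the polynomial $P_{i+1}(x) = P_i(x^2)$ is a reducible square lifting of the irreducible $P_i(x)$, so Square Lemma produces $A(x), B(x) \in \zz[x]$ such that, after absorbing unit signs,
\[
P_i(x) = A(x)^2 - xB(x)^2 \qquad \text{and} \qquad P_{i+2}(x) = A(x^2) + \sigma xB(x^2)
\]
for some $\sigma \in \{\pm 1\}$. By Lemma~\ref{lem:splitting sequences and NP}(1) the polynomial $P_{i+2}$ is nice; since $P_{i+2}$ is $k$-good with $k \ge 1$, Lemma~\ref{lem:n-good polynomial I} gives $P_{i+2}(x^2) \equiv P_{i+2}(x)^2 \pmod{2^{k+1}}$, and matching leading coefficients in this congruence forces $P_{i+2}$ to be monic.

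For part~(2), I would first apply Lemma~\ref{lem:Square Lemma for n-good polynomials} to the representation $P_{i+2}(x) = A(x^2) - x(-\sigma B(x^2))$: since $P_{i+2}$ is monic, $k$-good, and of even degree (inherited from $P_i$ via Lemma~\ref{lem:spliting sequences}(1)), the lemma yields $2^k \mid_{\zz[x]} B(x)$, so I can write $B(x) = 2^k C(x)$. Expanding $P_{i+2}(x^2) - P_{i+2}(x)^2$ in the chosen form and substituting $B(x^2)^2 = 2^{2k} C(x^2)^2 \equiv 0 \pmod{2^{k+1}}$, the middle cross term automatically picks up $2^{k+1}$ and the even-part congruence collapses to the key identity
\[
A(x)^2 - A(x^2) \equiv \sigma \cdot 2^k\, xC(x^2) \pmod{2^{k+1}}.
\]
Using this identity together with $B(x)^2 = 2^{2k}C(x)^2 \equiv 0 \pmod{2^{k+1}}$ and $B(x^2) = 2^k C(x^2)$ in the direct expansion
\[
P_i(x) - P_{i+2}(x) = \bigl[A(x)^2 - A(x^2)\bigr] - x\bigl[B(x)^2 + \sigma B(x^2)\bigr]
\]
exhibits that both sides contribute $\sigma \cdot 2^k\, xC(x^2)$ modulo $2^{k+1}$, and they cancel, proving part~(2).

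For part~(1), I would use (2) to write $P_i(x) = P_{i+2}(x) + 2^{k+1}T(x)$ for some $T(x) \in \zz[x]$. A binomial expansion of $(P_{i+2}(x) + 2^{k+1}T(x))^{2^k}$ shows that every term with index $j \ge 1$ picks up a factor of $(2^{k+1})^j \ge 2^{k+1}$, giving $P_i(x)^{2^k} \equiv P_{i+2}(x)^{2^k} \pmod{2^{k+1}}$. Chaining this with $P_{i+2}(x^{2^k}) \equiv P_{i+2}(x)^{2^k} \pmod{2^{k+1}}$ (by $k$-goodness of $P_{i+2}$) and $P_i(x^{2^k}) \equiv P_{i+2}(x^{2^k}) \pmod{2^{k+1}}$ (immediate from (2) after substituting $x \mapsto x^{2^k}$) gives $P_i(x^{2^k}) \equiv P_i(x)^{2^k} \pmod{2^{k+1}}$, which is the missing $m = k$ case of $k$-goodness; the cases $m < k$ are part of the hypothesis. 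The main obstacle will be the careful bookkeeping of signs in the Square Lemma decomposition and verifying the monic, even-degree hypotheses needed to invoke Lemma~\ref{lem:Square Lemma for n-good polynomials}; once the $k$-goodness of $P_{i+2}$ is leveraged to both force monicness and extract $2^k \mid_{\zz[x]} B(x)$, the rest is a direct modular calculation.
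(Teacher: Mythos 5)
Your proposal stands or falls on the appeal to Lemma~\ref{lem:Square Lemma for n-good polynomials}, which requires the polynomial to be monic \emph{and of even degree}, and the even-degree step is a genuine gap. You justify monicness nicely (comparing leading coefficients in $P_{i+2}(x^2) \equiv P_{i+2}(x)^2 \pmod{2^{k+1}}$ and using that $P_{i+2}$ is nice), but the parenthetical claim that even degree is ``inherited from $P_i$ via Lemma~\ref{lem:spliting sequences}(1)'' is not a proof of anything: that lemma gives $\deg P_{i+2} = \tfrac12 \deg P_{i+1} = \deg P_i$, and there is no hypothesis that $\deg P_i$ is even. Nor does the pattern $LS$ force it: $P_i(x) = x-1$ is nice and irreducible with $P_i(x^2) = (x-1)(x+1)$ reducible, so odd degree is perfectly compatible with $s_is_{i+1} = LS$; what excludes it is precisely the $k$-goodness of $P_{i+2}$, and this is where the real work of the paper's proof lies. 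The paper devotes a dedicated Claim to it: writing $P_{i+2}(x) = Q(x^2) + xR(x^2)$, using that $P_{i+2}$ is nice (so $Q(x^2)$ has odd constant term) and $1$-good, it shows that an odd degree would produce an odd-degree coefficient of $P_{i+2}(x)^2 - P_{i+2}(x^2)$ lying in $2 + 4\zz$, contradicting $P_{i+2}(x^2) \equiv P_{i+2}(x)^2 \pmod 4$. Without this (or an equivalent) argument you cannot invoke Lemma~\ref{lem:Square Lemma for n-good polynomials}, the divisibility $2^k \mid_{\zz[x]} B(x)$ is unavailable, and your entire modular computation collapses.

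A secondary point: the Square Lemma only gives that $P_i(x)$ is \emph{associate} to $A(x)^2 - xB(x)^2$ and that $P_{i+2}(x)$ is one of the irreducible factors of $P_i(x^2)$ up to sign; ``absorbing unit signs'' lets you normalize $P_{i+2}(x) = A(x^2) + \sigma x B(x^2)$ by flipping $A$ and $B$, but a possible global sign $P_i(x) = -\big(A(x)^2 - xB(x)^2\big)$ does not go away by renaming, and your final cancellation uses the exact equality, so you should pin this sign down (e.g., once even degree is known, the leading term of $A(x)^2 - xB(x)^2$ comes from $A(x)^2$ and is positive, so the sign is the sign of the leading coefficient of $P_i$, which one then has to show is $+1$). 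Aside from these issues, your computation for part~(2) --- the ``key identity'' extracted from $P_{i+2}(x^2) \equiv P_{i+2}(x)^2 \pmod{2^{k+1}}$ with $B(x) = 2^kC(x)$, followed by direct cancellation --- and your deduction of part~(1) from part~(2) are correct, and they are in fact somewhat leaner than the paper's route, which goes through the Match-and-Lift Lemma and a longer chain of congruences in $A$; but the missing even-degree argument is exactly the nontrivial Claim in the paper, so the proposal as written does not close the proof.
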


\begin{proof}
	(1) As $s_i = L$, the polynomial $P_i(x)$ is irreducible while its square lifting $P_i(x^2)$ is reducible. Then it follows from Lemma~\ref{lem:splitting sequences and NP} that $P_i(x)$ is a nice polynomial. Thus, after dropping the first $i-1$ terms of the sequence $(P_n(x))_{n \ge 1}$, we can assume that $i=1$. %$P_1(x) = P_i(x)$, and $P_2(x) = P_{i+2}(x)$. 
	
	 Since $P_1(x)$ is $(k-1)$-good, $P_1\big( x^{2^{k-1}} \big) \equiv P_1(x)^{2^{k-1}} \pmod{2^k}$ holds. On the other hand, the equality $s_2 = S$ means that $P_2(x)$ is reducible, and so in light of Square Lemma we can take $A(x), B(x) \in \zz[x]$ such that $P_1(x)$ is associate to $A(x)^2 - xB(x)^2$ and $P_3(x) \in \{ A(x^2) \pm xB(x^2) \}$. Because $P_1(x)$ is associate to $A(x)^2 - xB(x)^2$, it follows that $P_1(x) \equiv A(x)^2 - xB(x)^2 \pmod{2}$. Also, after replacing $B(x)$ by $-B(x)$ if necessary, we can assume that $P_3(x) = A(x^2) - xB(x^2)$. Using the fact that $P(x)^2 \equiv P(x^2) \pmod{2}$ for all $P(x) \in \zz[x]$, we obtain
	\[
		P_1(x) \equiv A(x)^2 - xB(x)^2 \equiv A(x^2) - xB(x^2) = P_3(x) \pmod{2}.
	\]
	This, in tandem with the fact that $P_1(x)$ and $P_3(x)$ are both $(k-1)$-good polynomials, implies that $P_1(x) \equiv P_3(x) \pmod{2^k}$ by virtue of the Match-and-Lift Lemma. Now we need to establish the following claim.
	\smallskip
	
	\noindent \textsc{Claim.} $\deg P_3(x)$ is even.
	\smallskip
	
	\noindent \textsc{Proof of Claim.} Assume for the sake of contradiction that $P_3(x)$ has odd degree. As $P_3(x)$ is $k$-good, it is in particular $1$-good and, therefore, $P_3(x^2) \equiv P_3(x)^2 \pmod{4}$. Write $P_3(x) = Q(x^2) + xR(x^2)$ for some polynomials $Q(x)$ and $R(x)$ in $\zz[x]$. Since $P_3(x)$ is nice, the constant coefficient of $Q(x^2)$ belongs to $\{ \pm 1 \}$: let $d_1$ be the maximum degree of a monomial of $Q(x^2)$ whose coefficient is odd. Because $P_3(x)$ has odd degree, $\deg P_3(x) = \deg xR(x^2)$: set $d_2 := \deg xR(x^2)$. Observe now that the only summand in the right-hand side of the equality
	\begin{equation} \label{auxin}
		P_3(x)^2 = Q(x^2)^2 + 2xQ(x^2)R(x^2) + x^2R(x^2)^2
	\end{equation}
	that involves monomials of odd degree is $2xQ(x^2)R(x^2)$. In addition, we see that the coefficient of the monomial of degree $d_1 + d_2$ in $2xQ(x^2)R(x^2)$ belongs to $2 + 4\zz$. Now the fact that neither $Q(x^2)^2$ nor $x^2R(x^2)^2$ in~\eqref{auxin} involve monomials of odd degree implies that $P_3(x)^2$ has a monomial of odd degree (namely, $d_1 + d_2$) whose coefficient belongs to $2 + 4\zz$, and so the same is true for the polynomial $P_3(x)^2 - P_3(x^2)$ because $P_3(x^2)$ involves no monomial of odd degree. However, this contradicts that $P_3(x^2) \equiv P_3(x)^2 \pmod{4}$.
	\smallskip

	In light of the established claim, $P_3(x)$ is a monic $k$-good polynomial with even degree. Thus, it follows from Lemma~\ref{lem:Square Lemma for n-good polynomials} that  $2^k \mid_{\zz[x]} B(x)$, which in turn implies that $2^k \mid_{\zz[x]} B(x^n)$ for every $n \in \nn$. Using again the fact that $P_3(x)$ is $k$-good, we see that for each $n \in \ldb 0,k \rdb$,
	\[
		A\big( x^{2^{n+1}} \big) - x^{2^n} B\big( x^{2^{n+1}} \big) \equiv P_3\big(x^{2^n}\big) \equiv P_3(x)^{2^n} \equiv (A(x^2) - xB(x^2))^{2^n} \equiv A(x^2)^{2^n} \pmod{2^{n+1}},
	\]
	where the last congruence relation holds because $2^n \mid_{\zz[x]} B(x^2)$. Thus, for each $n \in \ldb 0, k \rdb$, the congruence $A\big( x^{2^{n+1}} \big) - x^{2^n} B\big( x^{2^{n+1}} \big) \equiv A(x^2)^{2^n} \pmod{2^{n+1}}$ holds, and from this (after replacing $x^2$ by $x$) we obtain
	\begin{equation} \label{eq:tempp}
		A\big(x^{2^n}\big) - x^{2^{n-1}} B\big(x^{2^n}\big) \equiv A(x)^{2^n} \pmod{2^{n+1}}.
	\end{equation}

	We proceed to verify that $A\big(x^{2^{k-1}}\big) \equiv A(x)^{2^{k-1}}\pmod{2^k}$. This congruence clearly holds if $k=1$. Therefore we assume (in the scope of this paragraph) that $k \ge 2$. %, it is clear that $A\big( x^{2^{k-1}}\big) \equiv A(x)^{2^{k-1}} \pmod{2^k}$. On the other hand, when $k \ge 2$, after carrying out the obvious substitution in~\eqref{eq:tempp}, we see that 
	Taking $n$ to be $k-1$ in~\eqref{eq:tempp}, we see that
	\[
		A\big( x^{2^{k-1}}\big) - x^{2^{k-2}} B\big( x^{2^{k-1}} \big) \equiv A(x)^{2^{k-1}} \pmod{2^k},
	\]
	and so the fact that $2^k \mid_{\zz[x]} B\big( x^{2^{k-1}}\big)$ ensures that $A\big( x^{2^{k-1}}\big) \equiv A(x)^{2^{k-1}} \pmod{2^k}$, which is the congruence we wished to verify.
 
    Take a polynomial $D(x)$ in $\zz[x]$ such that $A(x)^{2^{k-1}} = A\big(x^{2^{k-1}}\big) + 2^k D(x)$. Using this and~\eqref{eq:tempp}, we observe that
	\[
		A\big(x^{2^k}\big) - x^{2^{k-1}} B\big(x^{2^k}\big) \equiv A(x)^{2^k} = \big( A\big(x^{2^{k-1}}\big) + 2^k D(x) \big)^2 \equiv A\big(x^{2^{k-1}}\big)^2 \pmod{2^{k+1}}.
	\]
	Now we can replace $x^{2^{k-1}}$ by $x$ in $A\big(x^{2^k}\big) - x^{2^{k-1}} B\big(x^{2^k}\big) \equiv A\big(x^{2^{k-1}}\big)^2 \pmod{2^{k+1}}$ to obtain the congruence relation $A(x^2) - xB(x^2) \equiv A(x)^2 \pmod{2^{k+1}}$. In addition, $2^k \mid_{\zz[x]} B(x)$ implies that  $2^{k+1} \mid_{\zz[x]} xB(x)^2 = A(x)^2 - P_1(x)$. After putting the last two statements together, we find that
	\begin{equation} \label{eq:part 2 of lemma}
		P_3(x) = A(x^2) - xB(x^2) \equiv A(x)^2 \equiv P_1(x) \pmod{2^{k+1}}.
	\end{equation}
	Because of this, we can finally infer that $P_1(x)$ is $k$-good from the fact that $P_3(x)$ is $k$-good.
	\smallskip
	
	(2) We have already seen this in~\eqref{eq:part 2 of lemma}.
\end{proof}

\begin{lemma}  \label{lem:string feature II}
	Let $(P_n(x))_{n \ge 0}$ be the splitting sequence of a nice polynomial, and suppose that the binary string $s_0 s_1 \ldots$ of $(P_n(x))_{n \ge 0}$ satisfies $s_{i-1} s_i s_{i+1} = LLS$ for some $i \in \nn$. If $P_i(x)$ and $P_{i+2}(x)$ are $k$-good for some $k \in \nn$, then $P_i(x)$ is $(k+1)$-good.
\end{lemma}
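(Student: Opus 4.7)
The plan is to exploit that $s_{i-1} = L$ gives $P_i(x) = P_{i-1}(x^2)$; by Lemma~\ref{lem:monomial composition keep n-goodness}, $P_i$ is $(k+1)$-good if and only if $P_{i-1}$ is, and by iterating Lemma~\ref{lem:n-good polynomial I} this reduces to showing the single congruence
\[
	P_{i-1}(y^2) \equiv P_{i-1}(y)^2 \pmod{2^{k+2}},
\]
given the $k$-goodness of $P_{i-1}$ (inherited from $P_i$ via Lemma~\ref{lem:monomial composition keep n-goodness}).

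Since $s_i s_{i+1} = LS$, Lemma~\ref{lem:string feature I} together with the Square Lemma gives $P_i(x) = A(x)^2 - xB(x)^2$, $P_{i+2}(x) = A(x^2) - xB(x^2)$, and $2^k \mid B$. The extra condition $P_i \in \zz[x^2]$ forces the odd-degree part of $A(x)^2 - xB(x)^2$ to vanish; decomposing $A(x) = A_e(x^2) + xA_o(x^2)$ (and similarly $B$) yields the polynomial identity $2A_e(y)A_o(y) = B_e(y)^2 + yB_o(y)^2$. From $A(0)^2 = P_i(0) = P_{i-1}(0)$ and niceness of $P_{i-1}$, one has $P_{i-1}(0) = 1$ and $A(0) = \pm 1$, so $A_e$ is monic (and $n := \deg P_{i-1}$ is forced to be even, since otherwise $A_o$ would be monic, contradicting $A_e A_o \equiv 0 \pmod{2}$ which follows from the identity modulo $2$ together with $2^k \mid B$). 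Using that $A_e$ is a non-zero-divisor modulo every power of $2$, the identity upgrades to $A_o \equiv 0 \pmod{2^{2k-1}}$, and expanding $A(x)^2$ then gives $A(x)^2 \equiv A_e(x^2)^2 \pmod{2^{2k}}$, whence
\[
	P_{i-1}(y) \equiv A_e(y)^2 \pmod{2^{2k}}.
\]

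The final step converts this square structure together with the $k$-goodness of $P_{i-1}$ into the target congruence. Writing $P_{i-1}(y) = A_e(y)^2 + 2^{2k} G(y)$ and expanding $P_{i-1}(y^2) - P_{i-1}(y)^2$, all $G$-terms are absorbed modulo $2^{k+2}$ whenever $2k \ge k+2$, reducing the task to controlling the Frobenius discrepancy $A_e(y^2)^2 - A_e(y)^4 = 4W(y)\bigl(A_e(y)^2 + W(y)\bigr)$, where $A_e(y^2) - A_e(y)^2 = 2W(y)$; monicity of $A_e(y)^2 + W(y)$ then lets us extract the needed divisibility of $W$ by bootstrapping the $k$-goodness of $P_{i-1}$ together with the rule $a \equiv b \pmod{2^n} \Rightarrow a^{2^j} \equiv b^{2^j} \pmod{2^{n+j}}$. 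The main obstacle is the tightness of the $k = 1$ case, for which we use the explicit expansion
\[
	P_{i-1}(z) = A_e(z)^2 + 4z A_o^{(1)}(z)^2 - 8z B'_e(z) B'_o(z)
\]
(with $A_o = 2 A_o^{(1)}$ and $B = 2 B'$) obtained by separating the even-degree contributions to $P_i(y) = P_{i-1}(y^2)$, and one must carefully align mod-$8$ congruences while invoking the $1$-goodness of $P_{i+2}$ to close the argument.
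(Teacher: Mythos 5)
Your overall architecture is fine up to the last step: the reduction of $(k+1)$-goodness to the single congruence $P_{i-1}(y^2)\equiv P_{i-1}(y)^2 \pmod{2^{k+2}}$, the parity identity $2A_e(y)A_o(y)=B_e(y)^2+yB_o(y)^2$, the conclusion $2^{2k-1}\mid_{\zz[x]} A_o(y)$, and hence $P_{i-1}(y)\equiv A_e(y)^2\pmod{2^{2k}}$ are all correct. The gap is that the final step is never actually carried out, and the information you have distilled by that point is provably insufficient. Writing $A_e(y^2)-A_e(y)^2=2W(y)$, your target amounts to $2^{k}\mid W$; but the full strength of the $k$-goodness of $P_{i-1}$ (via Lemma~\ref{lem:n-good polynomial I}) only gives $4W(A_e^2+W)\equiv 0\pmod{2^{k+1}}$, hence $2^{k-1}\mid W$ --- one factor of $2$ short --- and the squaring rule $a\equiv b\pmod{2^n}\Rightarrow a^2\equiv b^2\pmod{2^{n+1}}$ cannot recover it, since it is exactly what produced the mod-$2^{k+1}$ bound. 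Indeed the implication ``$P$ is $k$-good and congruent to the square of a monic polynomial mod $2^{2k}$, therefore $P(y^2)\equiv P(y)^2\pmod{2^{k+2}}$'' is false: for $k=2$ take $P(y)=(y+4)^2=y^2+8y+16$, which is $2$-good and literally a square, yet $P(y^2)-P(y)^2\equiv 8y^2\pmod{16}$. So the missing power of $2$ must come from hypotheses you have dropped, and your treatment of the $k=1$ case (a promise to ``align mod-$8$ congruences'' using the $1$-goodness of $P_{i+2}$) is likewise not a proof. The root cause is that you use the $k$-goodness of $P_{i+2}$ only through the divisibility $2^k\mid_{\zz[x]} B(x)$ (Lemma~\ref{lem:Square Lemma for n-good polynomials}), which loses exactly the factor of $2$ you later need.

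The paper closes precisely this gap by feeding in the $k$-goodness of $P_{i+2}$ through part~(2) of Lemma~\ref{lem:string feature I} instead: it first gets $P_i(x)\equiv P_{i+2}(x)=A(x^2)-xB(x^2)\pmod{2^{k+1}}$, and then uses $s_{i-1}=L$ --- i.e.\ that $P_i(x)=P_{i-1}(x^2)$ is supported on even degrees --- to split this congruence by parity, obtaining simultaneously $2^{k+1}\mid_{\zz[x]} xB(x^2)$ (one power better than your $2^k\mid B$) and $P_i(x)\equiv A(x^2)\pmod{2^{k+1}}$. From these, $P_i(x^2)=A(x^2)^2-x^2B(x^2)^2\equiv P_i(x)^2\pmod{2^{k+2}}$ is immediate, and the ending is the substitution-plus-$k$-goodness step you already have (followed, in your formulation, by Lemma~\ref{lem:monomial composition keep n-goodness}). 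Your outline is therefore repairable, but only by importing the congruence $P_i\equiv P_{i+2}\pmod{2^{k+1}}$ (equivalently, upgrading $2^k\mid B$ to $2^{k+1}\mid B$ via the parity argument); as written, the bootstrap does not close for any $k$.
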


\begin{proof}
	We can reuse the argument given in the first paragraph of the proof of Lemma~\ref{lem:string feature I} to assume that $i=1$. In this case, $s_0 s_1 s_2 = LLS$ and the polynomials $P_1(x)$ and $P_3(x)$ are $k$-good. %For simplicity, let $P_1(x) = P_i(x)$, and $P_2(x) = P_{i+2}(x)$. 
	
	It follows from part~(2) of Lemma~\ref{lem:string feature I} that $P_1(x) \equiv P_3(x) \pmod{2^{k+1}}$. In addition, the equality $s_1 s_2 = LS$ guarantees that $P_1(x)$ is irreducible and $P_1(x^2)$ is reducible, and so by Square Lemma we can take $A(x)$ and $B(x)$ in $\zz[x]$ such that $P_1(x) = A(x)^2 - xB(x)^2$ and $P_3(x) = A(x^2) - xB(x^2)$ (as we did in the proof of Lemma~\ref{lem:string feature I}). Since $s_0 = L$, we see that $P_1(x) = P_0(x^2)$. Therefore
	\[
		A(x^2) - xB(x^2) = P_3(x) \equiv P_1(x) = P_0(x^2) \pmod {2^{k+1}}.
	\]
	As $2^{k+1} \mid_{\zz[x]} xB(x^2) + (P_0(x^2) - A(x^2))$, the fact that the polynomials $xB(x^2)$ and $P_0(x^2) - A(x^2)$ have disjoint supports implies that $2^{k+1}$ divides in $\zz$ every coefficient of $xB(x^2)$. Thus, $2^{k+1} \mid_{\zz[x]} xB(x^2)$. Take $S(x) \in \zz[x]$ such that $xB(x^2) = 2^{k+1}S(x)$. Since $P_1(x) \equiv P_3(x) = A(x^2) - xB(x^2) \equiv A(x^2) \pmod{2^{k+1}}$, we can also take $T(x) \in \zz[x]$ such that $P_1(x) + 2^{k+1}T(x) = A(x^2)$. Then
	\[
		P_1(x^2) = A(x^2)^2 - x^2 B(x^2)^2 = (P_1(x) + 2^{k+1}T(x))^2 - (2^{k+1}S(x))^2 \equiv P_1(x)^2 \pmod{2^{k+2}}.
	\]
	Replacing $x$ by $x^{2^k}$ in $P_1(x^2) \equiv P_1(x)^2 \pmod{2^{k+2}}$, we obtain $P_1\big(x^{2^k}\big)^2 \equiv P_1\big(x^{2^{k+1}}\big) \pmod{2^{k+2}}$. As $P_1(x)$ is $k$-good, we can take $U(x) \in \zz[x]$ such that $P_1(x)^{2^k} = P_1\big(x^{2^k}\big) + 2^{k+1}U(x)$. After putting together the last two statements, we see that
	\[
		P_1(x)^{2^{k+1}} \equiv \big( P_1(x^{2^k}) + 2^{k+1} U(x) \big)^2 \equiv P_1\big(x^{2^k}\big)^ 2  \equiv P_1\big(x^{2^{k+1}}\big) \pmod{2^{k+2}}.
	\]
	This, together with the fact that $P_1(x)$ is $k$-good, allows us to conclude that $P_1(x)$ is a $(k+1)$-good polynomial.
\end{proof}

Let $(P_n(x))_{n \ge 1}$ be a splitting sequence of a nice polynomial, and let $\mathfrak{s} = s_1 s_2 \ldots$ be the binary string of $(P_n(x))_{n \ge 1}$. For each $n \in \nn$, let $a_n$ be the index of the $n$-th $S$ in the string $\mathfrak s$ if such an $S$ exists. Now set $a_0 = 0$ and $b_n := a_n - a_{n-1}$ for each $n \in \nn$. Observe that, by virtue of Square Lemma, no two $S$'s are consecutive in $\mathfrak s$. Thus, $b_n \ge 2$ for every $n \in \nn$.

\begin{definition}
	With notation as in the previous paragraph and $k \in \nn_0$, we say that a nice polynomial $P(x) \in \zz[x]$ is $k$-\emph{special} if $P(x)$ has a splitting sequence $(P_n(x))_{n \ge 1}$ such that there exist $k$ distinct indices $i_1, \dots, i_k \in \nn$ with $b_{i_n} > 2$ for every $n \in \ldb 1,k \rdb$.
\end{definition} 

%\color{red}
%
%The argument as is does actually fail here ($P_{m+2}$ does not actually have to be $n$-good), but we claim that this is just a result of faulty definitions. We modify the paragraph before Definition 3.16 as follows: Set $a_0 = 0$ and define $b_n := a_n - a_{n-1}$ instead. We define $k$-special polynomials analogously.
%
%The reason we make this modification is because $k$-specialness should be interpreted as “there remain at least $k$ LLS operations in the splitting sequence of this polynomial.” With the current definition, if the sequence were LSLSLLSLSLLS, you would detect two LLS as desired (at the previous $n = 2,4$) but if the sequence were LLSLSLSLLS, you would miss the first LLS because no S appears before it, and we would only claim the polynomial is $1$-special (at the previous $n = 3$) when in principle this polynomial should be $2$-special since it has two LLS subsequences. The proposed modification fixes this issue.
%
%This fixes the weird indexing issues that arise such as the $[1, \ell-2]$, $[1, m+1]$, $[2, m+1]$ kinds of boundary index shifts and should also fix the issue with $P_{m+2}$. Again, the underlying proof works, just the definition as written doesn’t capture the correct understanding of $k$-specialness. With the fix in the indices the rest of the proof should fix naturally.
%
%\color{black}

Observe that for $k \in \nn_0$, a nice polynomial $P(x) \in \zz[x]$ is $k$-special if there are at least $k$ copies of the string $LLS$ in one of the binary strings of $P(x)$. It follows directly from the definition that a $k$-special polynomial is also $j$-special for every $j \in \ldb 0, k \rdb$. We can use Lemmas~\ref{lem:string feature I} and~\ref{lem:string feature II} to argue the following proposition.

%\newpage
\begin{prop} \label{prop:nice polynomials}
	For a nice polynomial $P(x) \in \zz[x]$, the following statements hold.
	\begin{enumerate}
		\item If $P(x)$ is $k$-special, then $P(x)$ is $k$-good.
		\smallskip
		
		\item There exists $k \in \nn$ such that $P(x)$ is not $k$-special.
	\end{enumerate}
\end{prop}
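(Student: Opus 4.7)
The plan for part~(1) is induction on $k$. The base case $k = 0$ is trivial since every polynomial in $\zz[x]$ is $0$-good. For the inductive step, fix a splitting sequence $(P_n(x))_{n \ge 1}$ of a nice polynomial $P(x) = P_1(x)$ together with $k$ distinct indices $i_1 < \cdots < i_k$ witnessing $k$-specialness, and let $s_1 s_2 \cdots$ be the associated binary string. Focus on the first $LLS$ block, which occupies positions $a_{i_1}-2,\, a_{i_1}-1,\, a_{i_1}$. Since no two $S$'s can be adjacent in a splitting sequence, both $P_{a_{i_1}-1}(x)$ and $P_{a_{i_1}+1}(x)$ are irreducible (and hence nice by Lemma~\ref{lem:splitting sequences and NP}), and each initiates a sub-splitting-sequence that still contains the $k-1$ remaining $LLS$ blocks at positions $a_{i_2}, \dots, a_{i_k}$. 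By the inductive hypothesis, both $P_{a_{i_1}-1}(x)$ and $P_{a_{i_1}+1}(x)$ are $(k-1)$-good, so Lemma~\ref{lem:string feature II} promotes $P_{a_{i_1}-1}(x)$ to a $k$-good polynomial.

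The next task is to propagate $k$-goodness back from $P_{a_{i_1}-1}(x)$ to $P_1(x)$. Since the initial segment $s_1 \cdots s_{a_{i_1}-2}$ contains no $LLS$ block, we have $b_\ell = 2$ for every $\ell < i_1$, and this segment consists of the alternating string $LSLS\cdots LS$ of length $2(i_1-1)$ followed by a run of $L$'s of length $b_{i_1}-1 \ge 2$. The trailing $L$-run yields the monomial substitution $P_{a_{i_1}-1}(x) = P_{2i_1-1}\bigl(x^{2^{b_{i_1}-2}}\bigr)$, so Lemma~\ref{lem:monomial composition keep n-goodness} transports the $k$-goodness to $P_{2i_1-1}(x)$. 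Then, iterating Lemma~\ref{lem:string feature I} at the $LS$-block $s_{2\ell-1} s_{2\ell}$ for each $\ell = i_1-1, i_1-2, \dots, 1$---and invoking the inductive hypothesis on the sub-splitting-sequence starting at $P_{2\ell-1}(x)$ (which retains all $k$ $LLS$ blocks) to supply the $(k-1)$-goodness of $P_{2\ell-1}(x)$---one promotes $P_{2\ell-1}(x)$ to $k$-good for each such $\ell$; in particular, $P_1(x)$ is $k$-good. The corner case $i_1 = 1$ reduces immediately to Lemma~\ref{lem:monomial composition keep n-goodness}, since then $P_{a_1-1}(x) = P_1\bigl(x^{2^{a_1-2}}\bigr)$.

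For part~(2), the plan is to argue by contradiction using part~(1). Suppose $P(x)$ is $k$-special for every $k \in \nn$; then part~(1) gives that $P(x)$ is $k$-good for every $k$, and Lemma~\ref{lem:n-good polynomial I} yields $P(x^2) \equiv P(x)^2 \pmod{2^{k+1}}$ for every $k \in \nn$. The fixed polynomial $P(x^2) - P(x)^2 \in \zz[x]$ therefore has coefficients divisible by arbitrarily high powers of $2$, so it must be the zero polynomial; that is, $P(x^2) = P(x)^2$ in $\zz[x]$. Writing $P(x) = x^e Q(x)$ with $Q(0) \ne 0$ reduces this to $Q(x^2) = Q(x)^2$ with $Q(0) = 1$; comparing coefficients of odd powers of $x$ forces every odd-indexed coefficient of $Q(x)$ to vanish, so $Q(x) = R(x^2)$ with $R(x)$ satisfying the same functional equation, and a descent on degree yields $Q(x) = 1$. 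Hence $P(x) = x^e$ for some $e \in \nn_0$, but no such polynomial is nice (since $P(x) = 1$ is a unit and $P(x) = x^e$ for $e \ge 1$ has constant coefficient $0$). This contradiction completes the proof.

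The main obstacle is the backward propagation in part~(1), which demands a careful coordination of Lemmas~\ref{lem:monomial composition keep n-goodness},~\ref{lem:string feature I}, and~\ref{lem:string feature II} with the inductive hypothesis along the precise structure of the binary string up to the first $LLS$ block. Once the $n$-good machinery is in hand, part~(2) is a short, clean deduction that ultimately rests on the elementary observation that the functional equation $P(x^2) = P(x)^2$ admits only monomial solutions over $\zz$.
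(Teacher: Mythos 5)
Your proposal is correct and follows essentially the same route as the paper: part~(1) is the same induction on $k$, locating the first $LLS$ block (your ``$b_\ell = 2$ for $\ell < i_1$'' is an implicit but harmless WLOG, choosing the earliest witnesses), promoting the middle $L$ via Lemma~\ref{lem:string feature II}, and propagating back to $P_1(x)$ through the $L$-run and the alternating $LS$ prefix using Lemmas~\ref{lem:monomial composition keep n-goodness} and~\ref{lem:string feature I}, with the induction hypothesis applied to sub-splitting-sequences where the paper instead uses its minimality-of-$m$ bookkeeping. Part~(2) also matches the paper up to the functional equation $P(x^2) = P(x)^2$; your coefficient-comparison/descent argument that this forces $P(x) = x^e$ is a minor (and arguably cleaner) variant of the paper's evaluation at $2^{2^n}$, and the contradiction with niceness is the same.
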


\begin{proof}
	(1) We proceed by induction on~$k$. The base case $k=0$ is trivial as every polynomial in $\zz[x]$ is $0$-good. Now assume that the statement of the proposition holds when $k = n$ for some $n \in \nn_0$. Suppose that $P(x)$ is $(n+1)$-special. Let $(P_n(x))_{n \ge 1}$ be a splitting sequence of $P(x)$ making it an $(n+1)$-special polynomial, and let $\mathfrak{s} = s_1 s_2 \ldots$ be the binary string of $(P_n(x))_{n \ge 1}$. Because the sequence $(P_n(x))_{n \ge 1}$ makes $P(x)$ an $(n+1)$-special polynomial, $\mathfrak{s}$ must contain at least one copy of $LLS$ as a substring. Thus, there is a minimum index $m \in \nn_{\ge 2}$ such that $s_{m-1} s_m s_{m+1} = LLS$. Now we let $\ell \in \nn$ be the minimum index such that $s_i = L$ for every $i \in \ldb \ell, m \rdb$. The minimality of both~$\ell$ and~$m$, along with the fact that $\mathfrak{s}$ cannot contain a copy of $SS$ as a substring, ensures that either $\mathfrak{s}$ starts with a copy of the sting $LLS$ or $\ell \ge 3$ and the first $\ell-1$ letters of $\mathfrak{s}$ alternate between $L$ and $S$, with $s_1 = L$ because $P(x)$ is irreducible (i.e., $s_1 s_2 \dots s_{\ell-1} = LSLS \dots LS$). 
	
	For each $i \in \nn$, it follows from Lemma~\ref{lem:splitting sequences and NP} that $P_i(x)$ is a nice polynomial if $s_i = L$. Thus, for each $i \in \ldb 1,m+2 \rdb$ with $s_i = L$, the minimality of $m$ guarantees that $P_i(x)$ is $(n+1)$-special when $i \in \ldb 1, m-1 \rdb$ and $P_i(x)$ is $n$-special when $i \in \ldb m,m+2 \rdb$. %the polynomial $P_i(x)$ is $n$-special for every $i \in \ldb 1, m+1 \rdb$ with $s_i = L$ (more precisely, when $s_i = L$, the polynomial $P_i(x)$ is $(n+1)$-special if $i \in \ldb 1, \ell-2 \rdb$ and $n$-special if $i \in \ldb \ell, m \rdb$). 
	It follows then from our induction hypothesis that, for each $i \in \ldb 1, m+2 \rdb$, the polynomial $P_i(x)$ is $n$-good provided that $s_i = L$. On the other hand, if $s_i = S$ for some $i \in \ldb 2,m+1 \rdb$, then $s_{i-1} = L$ and so $P_i(x) = P_{i-1}(x^2)$: as $P_{i-1}(x)$ is $n$-good, it follows from Lemma~\ref{lem:monomial composition keep n-goodness} that $P_i(x)$ is also $n$-good. Thus, $P_i(x)$ is $n$-good for every $i \in \ldb 1, m+2 \rdb$. In particular, $P_m(x)$ and $P_{m+2}(x)$ are $n$-good polynomials. %\textcolor{red}{(TODO: Need to argue that $P_{m+2}(x)$ is actually $n$-good, as it may not be $n$-special))}. 
	Therefore $P_m(x)$ is $(n+1)$-good in light of Lemma~\ref{lem:string feature II}. This, together with the equality $P_m(x) = P_\ell\big(x^{2^{m-\ell}}\big)$, implies that $P_\ell(x)$ is also $(n+1)$-good by virtue of Lemma~\ref{lem:monomial composition keep n-goodness}. Thus, Lemma~\ref{lem:string feature I} guarantees that $P_{\ell-2}(x), P_{\ell-4}(x), \dots, P_1(x)$ are all $(n+1)$-good. In particular, $P(x)$ is $(n+1)$-good, which concludes our inductive argument.
	\smallskip
	
	(2) Suppose, by way of contradiction, that $P(x)$ is $k$-special for every $k \in \nn$. Then part~(1) ensures that $P(x)$ is $k$-good for every $k \in \nn$. Thus, it follows from Lemma~\ref{lem:n-good polynomial I} that $2^{k+1} \mid P(x)^2 - P(x^2)$ for every $k \in \nn$, which implies that $P(x)^2 = P(x^2)$. Now set $c := P(2)$. An immediate inductive argument can be used to shows that $P(2^{2^n}) = c^{2^n}$ for every $n \in \nn$. As a consequence, $P(x) = c^{\log_2 x} = x^{\log_2 c}$ for infinitely many $x$. Hence $c$ must be a power of $2$ and, therefore, $P(x) = x^j$ for some $j \in \nn$. In this case, $|P(0)| \neq 1$, contradicting that $P(x)$ is nice.
\end{proof}

\medskip
%%%%%%%%%%%%%%%%%%%%%%%%%%%%%
\subsection{Connection with Cyclotomic Polynomials}

In this subsection we establish some connections between the splitting sequences introduced in the previous subsection and cyclotomic polynomials.

\begin{lemma} \label{lem:lemma 13}
	% Let $P(x)$ be an irreducible polynomial in $\zz[x]$ with splitting sequence $(P_n(x))_{n \ge 1}$ whose binary string contains infinitely many copies of $S$. %$s_1 s_2 \ldots$. If $s_k = S$ for infinitely many indices $k$ and 
	% If the set $\{\deg P_n(x) : n \in \nn\}$ is bounded, then $P(x) \in \{ \pm x, \pm \Phi_n(x) : n \in 2 \nn_0 + 1\}$. %for some odd integer $n$.

    Let $P(x)$ be an irreducible polynomial in $\zz[x]$, and let $(P_n(x))_{n \ge 1}$ be a splitting sequence of $P(x)$. If $\{\deg P_n(x) : n \in \nn\}$ is a bounded set, then the following statements hold.
    \begin{enumerate}
        \item The binary string of $(P_n(x))_{n \ge 1}$ contains infinitely many copies of $S$.
        \smallskip

        \item $P(x) \in \{ \pm x, \pm \Phi_n(x) : n \in 2 \nn_0 + 1\}$.
    \end{enumerate}
\end{lemma}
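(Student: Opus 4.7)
For part~(1), the plan is to argue by contraposition. If the binary string of $(P_n(x))_{n \ge 1}$ has only finitely many copies of $S$, then beyond some index every letter is $L$, and by Lemma~\ref{lem:spliting sequences}(1) each subsequent step doubles the degree, so $\{\deg P_n(x) : n \in \nn\}$ is unbounded, contradicting the hypothesis.

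For part~(2), the monomial subcase is trivial since the only irreducible monomials in $\zz[x]$ are $\pm x$, so I assume $P(x)$ is not a monomial. Then Lemma~\ref{lem:splitting sequences and NP}(2) combined with part~(1) makes $P(x)$ a nice polynomial, and Proposition~\ref{prop:nice polynomials}(2) produces a $k \in \nn$ for which $P(x)$ is not $k$-special, so the binary string contains only finitely many copies of $LLS$. Since there are infinitely many $S$'s and no two $S$'s can be adjacent (by Square Lemma any splitting is irreducible), the binary string eventually coincides with the alternating pattern $LSLSLS\cdots$. Let $N \in \nn$ be minimal with $s_N s_{N+1} s_{N+2} \cdots = LSLSLS\cdots$; then $s_N = L$, and Lemma~\ref{lem:spliting sequences}(1) forces $\deg P_{N+2k}(x) = d$ for a fixed $d$ and every $k \ge 0$.

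The crux is to show that $P_N(x) = \pm \Phi_M(x)$ for some odd $M \in \nn$. Because $s_{N+2k+1} = S$ is preceded by $s_{N+2k} = L$, the polynomial $P_{N+2(k+1)}(x)$ is a splitting of $P_{N+2k}(x^2)$, hence divides $P_{N+2k}(x^2)$, so every root of $P_{N+2(k+1)}(x)$ is a square root of some root of $P_{N+2k}(x)$. Setting $M_k := \max\{|\alpha| : P_{N+2k}(\alpha) = 0\}$, we obtain $M_{k+1} \le \sqrt{M_k}$, hence $M_k \le \max(1, M_0)$ for every $k$. Since each $P_{N+2k}(x)$ is nice of degree~$d$, its integer coefficients (elementary symmetric functions of its roots) are uniformly bounded, so $(P_{N+2k}(x))_{k \ge 0}$ takes only finitely many values. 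By pigeonhole $P_{N+2k}(x) = P_{N+2l}(x)$ for some $k < l$, and iterating the divisibility $P_{N+2(j+1)}(x) \mid P_{N+2j}(x^2)$ gives $P_{N+2k}(x) \mid P_{N+2k}\bigl(x^{2^{l-k}}\bigr)$. Setting $m := 2^{l-k}$ and fixing a root $\beta$ of $P_{N+2k}(x)$ (nonzero since $|P_{N+2k}(0)| = 1$), the sequence $\beta, \beta^m, \beta^{m^2}, \ldots$ lies in the finite root set, so $\beta^{m^r} = \beta^{m^s}$ for some $r < s$, which gives $\beta^{m^r(m^{s-r}-1)} = 1$. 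Hence every root of $P_{N+2k}(x)$ is a root of unity, and irreducibility yields $P_{N+2k}(x) = \pm \Phi_M(x)$ for some $M$; $M$ must be odd since otherwise $P_{N+2k+1}(x) = \pm \Phi_M(x^2) = \pm \Phi_{2M}(x)$ would be irreducible, contradicting $s_{N+2k+1} = S$.

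Finally, I propagate $P_{N+2k}(x) = \pm \Phi_M(x)$ backward and then descend $N$ to~$1$. Given $P_{N+2(j+1)}(x) = \pm \Phi_M(x)$, the divisibility $P_{N+2(j+1)}(x) \mid P_{N+2j}(x^2)$ together with the oddness of $M$ (so that $\zeta \mapsto \zeta^2$ permutes the primitive $M$-th roots of unity) forces $\Phi_M(x) \mid P_{N+2j}(x)$; matching degrees gives $P_{N+2j}(x) = \pm \Phi_M(x)$, and inductively $P_N(x) = \pm \Phi_M(x)$. To conclude $N = 1$, observe that if $N \ge 2$ and $s_{N-1} = L$, then $P_N(x) = P_{N-1}(x^2) \in \zz[x^2]$, contradicting that the root set of $\Phi_M(x)$ (with $M$ odd) is not closed under negation (for $M$ odd, $-\zeta$ has order $2M \neq M$); hence $s_{N-1} = S$, and the same backward step gives $P_{N-2}(x) = \pm \Phi_M(x)$, placing the alternating tail at $N-2$ and contradicting minimality of $N$. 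Thus $P(x) = P_1(x) = \pm \Phi_M(x)$. I anticipate that the main obstacle is the pigeonhole-plus-divisibility step showing that the roots of $P_{N+2k}(x)$ are roots of unity; the remaining steps are careful bookkeeping with the Square Lemma and standard cyclotomic identities.
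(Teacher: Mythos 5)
Your proof is correct, and your part (1) is the same one-line argument the paper gives. For part (2) you share the paper's skeleton---bound the root magnitudes along the sequence, hence the integer coefficients, conclude that only finitely many polynomials occur, and pigeonhole a repetition to obtain a self-divisibility $P_i(x) \mid_{\zz[x]} P_i\big(x^{2^\ell}\big)$---but your endgame is genuinely different. The paper deduces from that self-divisibility that the largest root magnitude is $1$, invokes Kronecker's theorem to identify the repeated term as cyclotomic, and then uses Lemma~\ref{lem:splitting dividing composition power} to transport a root of unity forward to a root of $P(x)$ itself, finishing the oddness claim with Example~\ref{ex:splitting sequences and binary strings}(3). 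You avoid Kronecker entirely: iterating the power map inside the finite root set shows directly that every root is a root of unity, and oddness of $M$ follows from $\Phi_M(x^2)=\Phi_{2M}(x)$ for even $M$. To return to $P_1(x)$ you first normalize to an eventually alternating tail via Proposition~\ref{prop:nice polynomials}(2) (which the paper's proof of this lemma never uses, though it appears earlier, so there is no circularity) and then propagate $\pm\Phi_M$ backward along the tail using that squaring permutes the primitive $M$-th roots of unity, closing with the negation-closure and minimality descent; note that in your case $s_{N-1}=S$ the string structure alone (no $SS$, so $s_{N-2}=L$) already contradicts minimality, without needing $P_{N-2}(x)=\pm\Phi_M(x)$. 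The trade-off: your route is more elementary in dispensing with Kronecker's theorem, but it is longer, leaning on the $k$-special machinery and a two-case descent where the paper's forward transport via Lemma~\ref{lem:splitting dividing composition power} reaches $P(x)$ in a single step.
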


\begin{proof}
    Let $\mathfrak{s} := s_1 s_2 \ldots$ be the binary string of $(P_n(x))_{n \ge 1}$.
    \smallskip

    (1) Observe that if there existed $N \in \nn$ large enough so that $s_n = L$ for every $n \ge N$, then the inequality $P_{n+1}(x) > P_n(x)$ would hold for any $n \ge N$, which would contradict that $\{\deg P_n(x) : n \in \nn\}$ is a bounded set.
    \smallskip
    
	(2) After replacing $P_n(x)$ by its monic associate for every $n \in \nn$, one can assume that every term of the sequence $(P_n(x))_{n \ge 1}$ is monic. %We have seen in part~(1) of Example~\ref{ex:splitting sequences and binary strings} that the binary string of any splitting sequence of $x$ is that starting with a copy of $L$ and alternating between $L$ and $S$. 
    Assume that $P(x) \neq x$. Now it follows from Lemma~\ref{lem:splitting sequences and NP} that, for each $n \in \nn$, the polynomial $P_n(x)$ is nice if and only if $s_n = L$. Thus, $|P_n(0)| = 1$ for every $n \in \nn$. In particular, $P(x)$ is a nice polynomial and $|P(0)| = 1$.
 
	Let us argue first that the underlying set of $(P_n(x))_{n \ge 1}$ is finite. As $\mathfrak{s}$ does not have $SS$ as a substring, $s_i = L$ for infinitely many indices~$i$. Let ~$k$ be the largest magnitude of any root of $P(x)$. Since the magnitude of the product of all the roots of $P(x)$ equals $1$, we see that $k \ge 1$. Notice that the roots of $P(x^2)$ are the complex square roots of those of $P(x)$, with two square roots for each root of $P(x)$. As $k \ge 1$, the sequence $\big(k^{1/2^n}\big)_{n \ge 0}$ is decreasing. Thus, the largest magnitude of any root of a polynomial in the sequence $(P_n(x))_{n \ge 1}$ is upper bounded by $k$. Fix $n \in \nn$ and write
	\[
		P_n(x) = x^{d_n} + \sum_{j=0}^{d_n - 1} c_{n,j} x^j = \prod_{i=1}^{d_n}(x - r_i) %= x^{d_n} + c_{n,1} x^{d-1} + c_{n,2} x^{d-2} + \cdots + c_{n,d_n},
	\]
	for some $c_{n,0}, \dots, c_{n, d_n - 1} \in \zz$, setting $d_n := \deg P_n(x)$ and letting $r_1, \dots, r_n$ denote the complex roots of $P_n(x)$. Now observe that, for every $j \in \ldb 1, d_n \rdb$,
	\[
		|c_{n,d_n-j}| = \bigg{|}  \sum_{1 \le i_1 < \dots < i_j \le d_n} (-1)^j r_{i_1} \cdots r_{i_j} \bigg{|} \ \le \! \! \sum_{1 \le i_1 < \dots < i_j \le d_n} |r_{i_1} \cdots r_{i_j}|  \le \binom{d_n}{j} k^j.
	\]
	Since the underlying set of the sequence $(d_n)_{n \ge 1}$ is bounded by hypothesis, the set
    \[
        D := \bigg\{ \binom{d_n}{j} : n \in \nn \text{ and } j \in \ldb 0, d_n \rdb \bigg\}
    \]
    is also bounded. Therefore the set of all possible magnitudes of coefficients of polynomials in $(P_n(x))_{n \ge 1}$ is bounded by $k^d \max D$, where $d := \max \{d_n : n \in \nn\}$. Because each coefficient of any term of $(P_n(x))_{n \ge 1}$ is an integer, the fact that $(d_n)_{n \in \nn}$ is bounded guarantees that the underlying set of $(P_n(x))_{n \ge 1}$ is finite, as desired.
	\smallskip

    (2) Let us argue now that $P(x)$ is a cyclotomic polynomial. As the underlying set of the sequence $(P_n(x))_{n \ge 1}$ is finite, and $\mathfrak{s}$ does not contain $SS$ as a substring, we can pick indices $i,j \in \nn$ with $i < j$ such that $P_i(x)$ is irreducible and $P_i(x) = P_j(x)$. Set
	\[
		\ell := |\{w \in \ldb i,j-1 \rdb : s_w = L\}|.
	\]
	Since $i+1 < j$ and $\mathfrak{s}$ does not contain $SS$ as a substring, $\ell \ge 1$. %terms of $(P_n(x))_{n \ge 1}$ which are equal. Let $P(x)$ and $Q(x)$ be the earlier and the latter of such two polynomials. 
	It follows from Lemma~\ref{lem:splitting dividing composition power} that $P_j(x) \mid_{\zz[x]} P_i(x^{2^\ell})$. %, where $\ell$ is the number of lifts between $P_i(x)$ and $P_j(x)$ in the sequence. It is clear that $\ell \ge 1$. 
	Thus, $P_i(x) \mid_{\zz[x]} P_i(x^{2^\ell})$. Let $r$ be the root of $P_i(x)$ of largest magnitude and set $m := |r|$. Since the magnitude of the product of all the roots of $P_i(x)$ is $1$, the inequality $m \ge 1$ holds. If $s$ is a root of $P_i(x^{2^\ell})$, then $s^{2^\ell}$ is a root of $P_i(x)$, and so $|s|^{2^\ell} = |s^{2^\ell}| \le m$. Thus, every root of $P_i(x^{2^\ell})$ has magnitude at most $m^{1/2^\ell}$. Now, as $P_i(x) \mid_{\zz[x]} P_i(x^{2^\ell})$, it follows that $r$ is also a root of $P_i(x^{2^\ell})$. Therefore $m = |r| \le m^{1/2^\ell}$. Since $m \ge 1$, we obtain that $m = 1$. Hence every root of $P_i(x)$ lies on the closed unit disk, and so it follows from Kronecker's theorem that $P_i(x)$ is a cyclotomic polynomial. Therefore $r$ is a root of unity. If $i=1$, then $P(x) = P_i(x)$, and we are done. Otherwise, it follows from Lemma~\ref{lem:splitting dividing composition power} that $P_i(x) \mid_{\zz[x]} P\big( x^{2^{\ell'}} \big)$, where $\ell' := |\{w \in \ldb 1,i-1 \rdb : s_w = L\}|$. Hence the root of unity $r^{2^{\ell'}}$ is a root of $P(x)$, and so the fact that $P(x)$ is monic and irreducible implies that $P(x) = \Phi_n(x)$ for some $n \in \nn$.
	
	Finally, we only need to verify that $n$ is odd. Indeed, we have seen in part~(3) of Example~\ref{ex:splitting sequences and binary strings} that for even $k \in \nn$ the cyclotomic polynomial $\Phi_k(x)$ has only one splitting sequence, whose binary string does not contain any copy of $S$.
%	
%	To do so, write $n = 2^a b$ for some $a \in \nn_0$ and $b \in \nn$ such that $2 \nmid b$. Observe that if $a \ge 1$, then for every $t \in \nn$ the fact that $P\big( x^{2^t}) = \Phi_{2^ab}\big( x^{2^t}\big) = \Phi_{2^{a+t}b}(x)$ implies that $P\big( x^{2^t} \big)$ is irreducible. Hence the only splitting sequence of $P(x)$ would be $\big( P\big( x^{2^n}\big)\big)_{n \ge 1}$, contradicting the existing of a splitting sequence of $R(x)$ with a binary string containing infinitely many copies of~$S$.
\end{proof}

\bigskip
%%%%%%%%%%%%%%%%%%%%%%%%%
%%%%%%%%%%%%%%%%%%%%%%%%%
\section{Applications to Factorization Theory}
\label{sec:a rank 1 example}

\smallskip
%%%%%%%%%%%%%%%%%%%%%%%%%%%%%%%%
\subsection{Factorizing Polynomials with Dyadic Exponents}

In this subsection, we determine the irreducible polynomials in $\zz[x]$ that are not atomic in the monoid algebra $\qq[M_{1/2}]$, where $M_{1/2}$ denotes the valuation monoid $\nn_0\big[\frac12 \big]$. First, we show that $\qq[M_{1/2}]$ is an AP-domain, and to do so we need the following lemma.

\begin{lemma} \label{lem:irreducible elements in $Q[M_{1/2}]$}
	A nonzero polynomial expression $P(x) \in \qq[M_{1/2}]$ is irreducible in $\qq[M_{1/2}]$ if and only if $P\big( x^{2^n} \big)$ is irreducible in $\qq[x]$ for every $n \in \nn$ such that $P\big( x^{2^n}\big) \in \qq[x]$.
\end{lemma}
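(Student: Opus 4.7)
The plan is to prove both directions by a direct substitution argument that exploits the fact that $\qq[M_{1/2}]$ is the directed union of the subrings $\qq[x^{1/2^N}]$ as $N$ ranges over $\nn_0$, together with the fact that for each $N$ the assignment $x^{1/2^N} \mapsto x$ gives a $\qq$-algebra isomorphism $\qq[x^{1/2^N}] \to \qq[x]$. In particular, since $M_{1/2}$ has trivial unit group, the unit group of $\qq[M_{1/2}]$ equals $\qq^\times$, and a polynomial expression is a unit in $\qq[M_{1/2}]$ if and only if it is a nonzero constant in $\qq$; the same is true for $\qq[x]$. The first preliminary step is to observe that given any $P(x) \in \qq[M_{1/2}]$, since $P(x)$ has finitely many terms all with exponents in $\nn_0[\tfrac12]$, there exists $N \in \nn$ large enough so that $P\big(x^{2^N}\big) \in \qq[x]$, and the same holds simultaneously for any finite collection of elements of $\qq[M_{1/2}]$.

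For the forward direction I would argue the contrapositive. Suppose that there exists $n \in \nn$ with $P\big(x^{2^n}\big) \in \qq[x]$ and $P\big(x^{2^n}\big) = A(x) B(x)$ for some nonconstant $A(x), B(x) \in \qq[x]$. Substituting $x \mapsto x^{1/2^n}$ yields the equality $P(x) = A\big(x^{1/2^n}\big) B\big(x^{1/2^n}\big)$ in $\qq[M_{1/2}]$, and the two factors on the right are nonconstant elements of $\qq[M_{1/2}]$ (their supports contain nonzero exponents), so they are nonunits. Hence $P(x)$ is reducible in $\qq[M_{1/2}]$.

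For the backward direction, again I would argue the contrapositive: assume $P(x) = A(x) B(x)$ in $\qq[M_{1/2}]$ for some nonunits $A(x), B(x)$, i.e., nonconstant polynomial expressions. Using the preliminary observation, choose $N \in \nn$ large enough so that $A\big(x^{2^N}\big)$, $B\big(x^{2^N}\big)$, and $P\big(x^{2^N}\big)$ all lie in $\qq[x]$. Substituting $x \mapsto x^{2^N}$ in the factorization $P(x) = A(x)B(x)$ then yields $P\big(x^{2^N}\big) = A\big(x^{2^N}\big) B\big(x^{2^N}\big)$ in $\qq[x]$; the two factors are nonconstant polynomials in $\qq[x]$ because the substitution $x \mapsto x^{2^N}$ preserves the property of having nonzero exponents in the support. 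This shows $P\big(x^{2^N}\big)$ is reducible in $\qq[x]$ for some $n = N \in \nn$ with $P\big(x^{2^n}\big) \in \qq[x]$, completing the contrapositive.

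I do not expect a real obstacle here: both directions are essentially the same isomorphism-of-subrings trick, and the only thing to check carefully is that nonunits correspond to nonunits under substitution, which reduces to the characterization of units in $\qq[M_{1/2}]$ and in $\qq[x]$ as nonzero constants. The one point that requires a brief justification is that one can always pick the exponent $N$ in $\nn$ (rather than merely in $\nn_0$), but since any sufficiently large integer works this is automatic.
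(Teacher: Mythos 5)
Your proposal is correct and follows essentially the same route as the paper: both directions are handled by the substitutions $x \mapsto x^{1/2^n}$ and $x \mapsto x^{2^N}$, using that a factor is a unit precisely when it is a nonzero constant. The only cosmetic difference is that you argue both implications by contrapositive while the paper argues them directly, which changes nothing of substance.
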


\begin{proof}
	For the direct implication, suppose that $P(x)$ is irreducible in $\qq[M_{1/2}]$. Take $n \in \nn$ such that $P\big( x^{2^n}\big) \in \qq[x]$ and write $P\big( x^{2^n}\big) = A(x) B(x)$ for some $A(x), B(x) \in \qq[x]$. Since $P(x) = A\big( x^{1/2^n}\big) B\big( x^{1/2^n}\big)$ in $\qq[M_{1/2}]$, the irreducibility of $P(x)$ guarantees that either $A(x)$ or $B(x)$ is constant. Thus, $P\big( x^{2^n}\big)$ is irreducible in $\qq[x]$.
	
	For the reverse implication, suppose that for each $n \in \nn$, the fact that $P\big( x^{2^n}\big) \in \qq[x]$ implies that $P\big( x^{2^n}\big)$ is irreducible in $\qq[x]$. Write $P(x) = A(x) B(x)$ for some $A(x), B(x) \in \qq[M_{1/2}]$. After taking $n \in \nn$ such that $A\big( x^{2^n} \big)$ and $B\big( x^{2^n} \big)$ both belong to $\qq[x]$, we see that $P\big( x^{2^n} \big) = A\big( x^{2^n} \big) B\big( x^{2^n} \big) \in \qq[x]$. Thus, $P\big( x^{2^n}\big)$ is irreducible in $\qq[x]$, which implies that either $A(x)$ or $B(x)$ is constant. Hence $P(x)$ is irreducible in $\qq[M_{1/2}]$.
\end{proof}

\begin{prop}
	The monoid algebra $\qq[M_{1/2}]$ is an AP-domain.
\end{prop}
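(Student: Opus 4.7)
The plan is to show every irreducible element of $\qq[M_{1/2}]$ is prime by reducing the question to the UFD $\qq[x]$ via the substitutions $x \mapsto x^{2^n}$ and $x \mapsto x^{1/2^n}$ for a suitably large $n$. That $\qq[M_{1/2}]$ is a domain is already known because $M_{1/2}$ is torsion-free, so only the primality of irreducibles requires work.

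Concretely, suppose $P(x) \in \qq[M_{1/2}]$ is irreducible and $P(x) \mid_{\qq[M_{1/2}]} A(x) B(x)$, so that $A(x) B(x) = P(x) C(x)$ for some $C(x) \in \qq[M_{1/2}]$. Since each of $P(x), A(x), B(x), C(x)$ has finite support in $M_{1/2} = \nn_0\big[\frac12\big]$, I would pick $n \in \nn$ large enough that the polynomial expressions $P(x^{2^n}), A(x^{2^n}), B(x^{2^n}), C(x^{2^n})$ all lie in $\qq[x]$ (bounding $n$ below by the largest $2$-adic denominator appearing in any of the four supports suffices). By Lemma~\ref{lem:irreducible elements in $Q[M_{1/2}]$}, $P(x^{2^n})$ is then irreducible, hence prime, in the UFD $\qq[x]$. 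From the identity $A(x^{2^n}) B(x^{2^n}) = P(x^{2^n}) C(x^{2^n})$ holding in $\qq[x]$, it follows that $P(x^{2^n})$ divides one of the two factors; without loss of generality, write $A(x^{2^n}) = P(x^{2^n}) D(x)$ for some $D(x) \in \qq[x]$.

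To finish, I would substitute $x \mapsto x^{1/2^n}$, which is legitimate inside $\qq[M_{1/2}]$ because the resulting exponents land in $\frac{1}{2^n}\nn_0 \subseteq M_{1/2}$. This turns the equality $A(x^{2^n}) = P(x^{2^n}) D(x)$ into $A(x) = P(x) D(x^{1/2^n})$ in $\qq[M_{1/2}]$, so $P(x) \mid_{\qq[M_{1/2}]} A(x)$, exhibiting $P(x)$ as prime. The substitutions $x \mapsto x^{2^n}$ and $x \mapsto x^{1/2^n}$ are mutually inverse automorphisms of $\qq[M_{1/2}]$, so transporting divisibility back and forth is routine; the real content has already been absorbed into Lemma~\ref{lem:irreducible elements in $Q[M_{1/2}]$}, and I do not anticipate any serious obstacle.
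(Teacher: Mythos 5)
Your proposal is correct and follows essentially the same route as the paper: choose $n$ so that all four polynomial expressions land in $\qq[x]$ after the substitution $x \mapsto x^{2^n}$, invoke Lemma~\ref{lem:irreducible elements in $Q[M_{1/2}]$} to see the image of the irreducible is irreducible (hence prime) in the UFD $\qq[x]$, and transport the resulting divisibility back via $x \mapsto x^{1/2^n}$. The only cosmetic difference is that the paper also notes in passing that monomials are not atomic in $\qq[M_{1/2}]$ (so an irreducible has nonzero constant term), a remark your argument does not need.
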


\begin{proof}
	Let $A(x)$ be an irreducible polynomial expression in $\qq[M_{1/2}]$. %Assume that such a $P(x)$ minimizes $\min \mathsf{L}(P(x))$ and set $\ell := \min \mathsf{L}(P(x))$. 
	Because the multiplicative monoid $M := \big\{ qx^m : q \in \qq \setminus \{0\} \text{ and } m \in M_{1/2} \big\}$ is a divisor-closed submonoid of $\qq[M_{1/2}]^*$ whose reduced monoid is isomorphic to $M_{1/2}$, the fact that $M_{1/2}$ is antimatter implies that $M$ is antimatter. Hence no monomial of $\qq[M_{1/2}]$ is atomic. Thus, $A(0) \neq 0$. Now let $P(x)$ and $Q(x)$ be nonzero polynomial expressions in $\qq[M_{1/2}]$ such that $A(x) \mid_{\qq[M_{1/2}]} P(x) Q(x)$. Take $B(x) \in \qq[M_{1/2}]$ such that $A(x) B(x) = P(x) Q(x)$, and then take $m \in \nn$ such that $A\big( x^{2^m} \big), B\big( x^{2^m} \big), P\big( x^{2^m} \big)$, and $Q\big( x^{2^m}\big)$ all belong to $\qq[x]$. Hence $A\big(x^{2^m}\big) \mid_{\qq[x]} P\big( x^{2^m }\big) Q\big( x^{2^m}\big)$. It follows from Lemma~\ref{lem:irreducible elements in $Q[M_{1/2}]$} that $A\big( x^{2^m}\big)$ is irreducible in $\qq[x]$, so it must be prime because $\qq[x]$ is an AP-domain. Then we can assume that $A\big(x^{2^m}\big) \mid_{\qq[x]} P\big( x^{2^m }\big)$. Take $B'(x) \in \qq[x]$ such that $A\big( x^{2^m}\big) B'(x) = P\big( x^{2^m}\big)$ or, equivalently, $A(x) B'\big(x^{1/2^m} \big) = P(x)$. Therefore $A(x) \mid_{\qq[M_{1/2}]} P(x)$, and we can conclude that $A(x)$ is prime. Hence $\qq[M_{1/2}]$ is an AP-domain.
\end{proof}

\begin{cor} \label{cor:atomic element characterization in $Q[M_{1/2}]$}
	For a polynomial expression $P(x) \in \qq[M_{1/2}]$, the following statements are equivalent.
	\begin{enumerate}
		\item[(a)] $P(x)$ has finitely many non-associate divisors.
		\smallskip
		
		\item[(b)] $P(x)$ satisfies the ACCP.
		\smallskip
		
		\item[(c)] $P(x)$ is atomic.
	\end{enumerate}
\end{cor}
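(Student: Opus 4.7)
The three implications \((a) \Rightarrow (b) \Rightarrow (c) \Rightarrow (a)\) can each be handled with relatively short arguments once one exploits that \(\qq[M_{1/2}]\) is an AP-domain, which has just been established.

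For \((a) \Rightarrow (b)\), I would argue directly: suppose \(P(x)\) has only finitely many non-associate divisors, and let \((P_0(x)) \subseteq (P_1(x)) \subseteq \cdots\) be an ascending chain of principal ideals in \(\qq[M_{1/2}]\) with \(P_0(x) = P(x)\). For each \(n \in \nn\), the containment \((P(x)) \subseteq (P_n(x))\) is equivalent to \(P_n(x) \mid_{\qq[M_{1/2}]} P(x)\), so every term of the chain corresponds to a divisor of \(P(x)\). Since non-associate divisors are finite, the chain realizes only finitely many distinct ideals and must stabilize.

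The implication \((b) \Rightarrow (c)\) is the element-wise version of the standard ``ACCP implies atomic'' argument: if \(P(x)\) were not atomic, one could build a strictly ascending chain of principal ideals starting at \((P(x))\) by repeatedly splitting off a non-unit factor from the non-atomic half, contradicting the ACCP for \(P(x)\). This is routine and I would state it briefly.

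The main step is \((c) \Rightarrow (a)\), and this is where the AP-domain property does the real work. Assume \(P(x)\) is atomic, so \(P(x) = u \cdot A_1(x) \cdots A_n(x)\) with \(u \in \qq[M_{1/2}]^\times\) and each \(A_i(x)\) irreducible. Because \(\qq[M_{1/2}]\) is an AP-domain, each \(A_i(x)\) is in fact prime, and then a short induction on \(n\) (using primality at each step together with cancellativity) shows that any divisor \(D(x)\) of \(P(x)\) in \(\qq[M_{1/2}]\) is associate to \(\prod_{i \in S} A_i(x)\) for some multisubset \(S\) of \(\{1,\dots,n\}\). There are only finitely many such multisubsets, so \(P(x)\) has finitely many non-associate divisors, as required.

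I do not anticipate a significant obstacle: everything rests on the AP-domain proposition, plus the elementary observation that in any AP-domain an atomic element admits a prime factorization that controls its divisor lattice. The only care required is in \((c) \Rightarrow (a)\), where one must phrase the ``subset of prime factors'' argument correctly so that it handles repeated primes (giving the explicit bound \(\prod_i (m_i+1)\) on non-associate divisors, where the \(m_i\) are the multiplicities of the distinct primes), but this is straightforward.
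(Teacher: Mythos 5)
Your proposal is correct and follows essentially the same route as the paper: the paper also dismisses (a)$\Rightarrow$(b) as immediate, handles (b)$\Rightarrow$(c) by the standard element-wise ``ACCP implies atomic'' fact (cited there from \cite[Lemma~3.4]{GL23a} rather than reproved), and proves (c)$\Rightarrow$(a) exactly as you do, by factoring $P(x)$ into irreducibles, invoking the AP-domain property to make them prime, and concluding that every divisor is associate to a subproduct. No gaps; your remark about multiplicities is just a more explicit bookkeeping of the same argument.
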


\begin{proof}
	(a) $\Rightarrow$ (b): This is clear.
	\smallskip
	
	(b) $\Rightarrow$(c): This follows from~\cite[Lemma~3.4]{GL23a}.
	\smallskip
	
	(c) $\Rightarrow$ (a): Assume that $P(x)$ is atomic, and write $P(x) = A_1(x) \cdots A_\ell(x)$ for some irreducibles $A_1(x), \dots, A_\ell(x)$ in $\qq[M_{1/2}]$. Since $\qq[M_{1/2}]$ is an AP-domain, $A_1(x), \dots, A_\ell(x)$ are primes and, therefore, for every divisor $Q(x)$ of $P(x)$ in $\qq[M_{1/2}]$ there exists $I \subseteq \ldb 1,\ell \rdb$ such that $Q(x)$ is associate to $\prod_{i \in I} A_i(x)$ in $\qq[M_{1/2}]$. Thus, $P(x)$ as finitely many non-associate divisors.
\end{proof}

Following Grams and Warner~\cite{GW75}, we say that an integral domain is \emph{irreducible-divisor-finite} (or an \emph{IDF-domain}) if every nonzero element has only finitely many irreducible divisors up to associates. As the following example illustrates, $\qq[M_{1/2}]$ is not an IDF-domain.

\begin{example}\cite[Example~5.5]{fG22}.
	For each $n \in \nn$, we can write
	\begin{equation} \label{eq:factorization into cyclotomics}
		x-1 = \big( x^{1/2^n}\big)^{2^n} - 1 = \prod_{j=0}^n \Phi_{2^j}\big( x^{1/2^n}\big).
	\end{equation}
	It follows from Lemma~\ref{lem:irreducible elements in $Q[M_{1/2}]$} that all the factors in the rightmost expression of~\eqref{eq:factorization into cyclotomics} except the one corresponding to $j=0$ are irreducible in $\qq[M_{1/2}]$. Hence the binomial $x-1$ has infinitely many non-associate irreducible divisors in $\qq[M_{1/2}]$.
\end{example}
\smallskip

We say that a nonconstant polynomial expression $P(x) \in \zz[M_{1/2}]$ is \emph{quasi-irreducible} provided that $P\big(x^{2^n}\big)$ is irreducible in $\zz[x]$ for some $n \in \nn_0$. It follows directly from the definition that every nonconstant polynomial $P(x)$ that is irreducible in $\zz[x]$ must be quasi-irreducible as an element of the monoid algebra $\zz[M_{1/2}]$. 

We proceed to assign a tree to each quasi-irreducible polynomial expression. Fix a quasi-irreducible $P(x) \in \zz[M_{1/2}]$. If $P(x)$ is irreducible in $\qq[M_{1/2}]$, then the \emph{splitting tree} of $P(x)$ consists of one node, itself. Now suppose that $P(x)$ is reducible in $\qq[M_{1/2}]$. In light of Gauss's lemma, we can pick nonconstant polynomials $A(x), B(x) \in \zz[x]$ and $m \in \nn_0$ such that $P\big( x^{2^m}\big) = A(x) B(x)$. Assume that~$m$ has been chosen as small as possible. This being the case, the fact that $P(x)$ is quasi-irreducible implies that $P\big( x^{2^{m-1}}\big)$ is an irreducible polynomial in $\zz[x]$. Thus, it follows from Square Lemma that $A(x)$ and $B(x)$ are the irreducible polynomials in the unique factorization of $P\big( x^{2^m}\big)$ in $\zz[x]$. Now set $A'(x) := A(x^{2^{-m}})$ and $B'(x) := B(x^{2^{-m}})$. Therefore $A'(x)$ and $B'(x)$ are quasi-irreducibles in $\zz[M_{1/2}]$ satisfying $P(x) = A'(x) B'(x)$. Then the \emph{splitting tree} of $P(x)$ is the binary tree having root $P(x)$ with children $A'(x)$ and $B'(x)$, and the children of $A'(x)$ and $B'(x)$ are inductively determined by the splitting trees of $A'(x)$ and $B'(x)$.

We proceed to characterize the irreducible polynomials in $\zz[x]$ that are not atomic in the monoid algebra $\qq[M_{1/2}]$.

\begin{prop} \label{prop:characterization of polynomials in Z[x] not atomic in Q[M_{1/2}]}
	If an irreducible polynomial in $\zz[x]$ is not atomic in $\qq[M_{1/2}]$, then it has a splitting sequence whose binary string has infinitely many copies of $S$.
\end{prop}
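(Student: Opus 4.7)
The plan is to prove the contrapositive: assuming that $P(x) \in \zz[x]$ is irreducible but not atomic in $\qq[M_{1/2}]$, I will construct a splitting sequence of $P(x)$ whose binary string contains infinitely many copies of $S$. The main device is the splitting tree of $P(x)$ defined earlier in this subsection.

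First I would show that $P(x)$ is atomic in $\qq[M_{1/2}]$ if and only if its splitting tree is finite. If the tree is finite, then the product of its leaves (each irreducible in $\qq[M_{1/2}]$ by construction of the tree) equals $P(x)$, so $P(x)$ is atomic. Conversely, if $P(x) = Q_1 \cdots Q_r$ is an atomic decomposition, then each $Q_i$ is prime because $\qq[M_{1/2}]$ is an AP-domain (Proposition above in this subsection). At every internal node of the splitting tree with children $A(x)$ and $B(x)$, primeness forces each $Q_i$ to divide exactly one of $A(x)$ or $B(x)$, and since both children are nonconstant, each inherits strictly fewer irreducible factors than the parent. A routine induction on $r$ then makes every subtree finite. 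Since $P(x)$ is not atomic, its splitting tree is infinite, and K\"onig's lemma applied to this binary tree produces an infinite path $P(x) = P^{(0)}, P^{(1)}, P^{(2)}, \ldots$

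Next I would translate the infinite path into a splitting sequence. For each $i \ge 0$, let $m_i$ be the smallest nonnegative integer such that $P^{(i)}(x^{2^{m_i}})$ lies in $\zz[x]$ and is reducible in $\zz[x]$, and write $P^{(i)}(x^{2^{m_i}}) = A_i(x) B_i(x)$ with $A_i(x)$ the Square Lemma factor corresponding to $P^{(i+1)}$, so that $P^{(i+1)}(x) = A_i(x^{1/2^{m_i}})$. Set $k_0 := 0$ and $k_{i+1} := m_i$; then $P^{(i)}(x^{2^{k_i}})$ is always irreducible in $\zz[x]$ (this is the hypothesis for $i = 0$, and follows for $i \ge 1$ from the Square Lemma irreducibility of $A_{i-1}(x)$). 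I would then define a splitting sequence $(P_n(x))_{n \ge 0}$ of $P(x)$ inductively: start with $P_0(x) = P(x) = P^{(0)}(x^{2^{k_0}})$; for each $i \ge 0$, perform $m_i - k_i$ consecutive square liftings (each contributing a letter $L$) to reach $P^{(i)}(x^{2^{m_i}})$, and then perform one splitting (contributing the letter $S$) that selects the irreducible factor $A_i(x) = P^{(i+1)}(x^{2^{m_i}})$.

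Finally I would verify the construction. Legitimacy as a splitting sequence requires that no two $S$'s appear in consecutive positions, which is equivalent to $m_i - k_i \ge 1$ for every $i \ge 0$. For $i = 0$ this is $m_0 \ge 1$, which holds because $P(x)$ is irreducible in $\zz[x]$; for $i \ge 1$ it reads $m_i \ge m_{i-1} + 1$, which follows from the minimality of $m_i$ combined with the fact that $P^{(i)}(x^{2^{m_{i-1}}}) = A_{i-1}(x)$ is already irreducible in $\zz[x]$. Since the path $P^{(0)}, P^{(1)}, P^{(2)}, \ldots$ contributes exactly one $S$ per step, the binary string of the resulting splitting sequence contains infinitely many copies of $S$, as required. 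The main obstacle is the equivalence between atomicity of $P(x)$ in $\qq[M_{1/2}]$ and finiteness of its splitting tree; this is the step that most crucially leverages the AP-domain property of $\qq[M_{1/2}]$ together with the uniqueness of Square Lemma factorizations.
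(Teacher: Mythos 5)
Your proposal is correct and follows essentially the same route as the paper: infinite splitting tree (finiteness would force atomicity via the product of the leaves), K\"onig's lemma to extract an infinite path, and a translation of that path into a splitting sequence whose binary string picks up one $S$ per node of the path. The only difference is cosmetic — you also sketch the unneeded converse (atomic implies finite tree) via the AP-property, and your opening sentence mislabels the direct implication as "the contrapositive" — neither of which affects the argument.
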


\begin{proof}
	Let $P(x)$ be an irreducible polynomial in $\zz[x]$ that is not atomic in $\qq[M_{1/2}]$. As $P(x)$ is not atomic in $\qq[M_{1/2}]$, it is nonconstant. Let $\mathfrak{t}$ be the splitting tree of $P(x)$. First, notice that the leaves of $\mathfrak{t}$ cannot split, and so every leaf of $\mathfrak{t}$ is irreducible in $\qq[M_{1/2}]$. Since every non-leaf is the product of its two children, if $\mathfrak{t}$ were finite, then we could proceed by induction on the height of $\mathfrak{t}$ to argue that $P(x)$ equals the product of all the leaves of $\mathfrak{t}$: however, this is not possible because $P(x)$ is not atomic in $\qq[M_{1/2}]$. Therefore $\mathfrak{t}$ must be an infinite tree, and so K\"onig's lemma guarantees the existence of an infinite path starting at the root of $\mathfrak{t}$. Let $Q_n(x)$ be the $n$-th polynomial in such infinite path, where $Q_1(x) = P(x)$. For each $n \in \nn$, the fact that $Q_n(x)$ was not taken from a leaf of~$\mathfrak{t}$ guarantees that $Q_n(x)$ is reducible in $\qq[M_{1/2}]$, and so by virtue of Lemma~\ref{lem:irreducible elements in $Q[M_{1/2}]$} there exists a minimum $e_n \in \nn$ such that $Q_n\big( x^{2^{e_n}}\big)$ is reducible in $\qq[x]$, and so in $\zz[x]$ because of Gauss's lemma. %Since $Q_n(x)$ is quasi-irreducible, the minimality of $e_n$ ensures that $Q_n\big( x^{2^{e_n-1}}\big)$ is an irreducible polynomial in $\zz[x]$...
	
	We proceed to construct inductively a splitting sequence $(P_n(x))_{n \ge 0}$ of $P(x)$ and a strictly increasing sequence $(m_k)_{k \ge 1}$ of positive integers such that $P_{m_k}(x)$ is reducible in $\zz[x]$ for every $k \in \nn$. For the base case, set $P_0(x) := P(x)$ and then set $P_j(x) := P(x^{2^j})$ for every $j \in \ldb 1, e_1 \rdb$. Note that $P_j(x)$ is the square lifting of $P_{j-1}(x)$ for every $j \in \ldb 1, e_1 \rdb$. Also, set $m_1 := e_1$, and observe that $P_{m_1}(x) = Q_1\big( x^{2^{e_1}}\big)$ is reducible in $\zz[x]$. 
 
    For the inductive step, assume that for some $k \in \nn$ we have chosen $P_0(x), \dots, P_{m_k}(x) \in \zz[x]$ and $m_1, \dots, m_k \in \nn$ with $m_1 < \cdots < m_k$ in such a way that $P_j(x)$ is the square lifting (resp., a splitting) of $P_{j-1}(x)$ if $P_{j-1}(x)$ is irreducible (resp., reducible) for every $j \in \ldb 1, m_k \rdb$ and $P_{m_i}(x) = Q_i\big(x^{2^{e_i}} \big)$ for every $i \in \ldb 1,k \rdb$. As a consequence, we can write $P_{m_k}(x) = Q_k\big(x^{2^{e_k}} \big) = A_{k+1}(x) B_{k+1}(x)$, where $A_{k+1}(x)$ and $B_{k+1}(x)$ are the irreducibles of $\zz[x]$ resulting from Square Lemma, and we can assume that $Q_{k+1}(x) = A\big( x^{2^{-e_k}}\big) \in \zz[M_{1/2}]$. Since $A_{k+1}\big( x^{2^{e_{k+1} - e_k}}\big) = Q_{k+1}\big( x^{2^{e_{k+1}}}\big)$ is reducible in $\zz[x]$, the fact that $A_{k+1}(x)$ is irreducible in $\zz[x]$ implies that $n_k := e_{k+1} - e_k \ge 1$. Now set $P_{n+j}(x) := A_{k+1}\big( x^{2^{j-1}}\big)$ for every $j \in \ldb 1, n_k + 1 \rdb$. It is clear that $P_{m_k+1}(x)$ is a splitting of $P_{m_k}(x)$, namely, $A_{k+1}(x)$. In addition, observe that $P_{m_k+j+1}(x)$ is the square lifting of $P_{m_k+j}(x)$ for every $j \in \ldb 1, n_k \rdb$, and $P_{m_k + n_k + 1}(x) = Q_{k+1}\big( x^{2^{e_{k+1}}}\big)$. 
    
    Having settled our inductive step, we can guarantee the existence of a splitting sequence $(P_n(x))_{n \ge 0}$ of $P(x)$ and a strictly increasing sequence $(m_k)_{k \ge 1}$ of positive integers such that $P_{m_k}(x)$ is reduced for every $k \in \nn$. Thus, $(P_n(x))_{n \ge 0}$ is a splitting sequence of $P(x)$ whose binary string contains infinitely many copies of~$S$.
\end{proof}

%\newpage
%The following lemma is used in Theorem~\ref{thm:irreducible polynomials that are not atomic in $Q[M_1/2]$}.
%
%\begin{lemma} \label{lem:sufficient conditions for nice polynomial}
%	Let $P(x)$ be an irreducible polynomial in $\zz[x]$. If $P(x)$ has a splitting sequence whose binary string has infinitely many copies of $S$, then $P(x)$ is associate to a nice polynomial.
%\end{lemma}
%
%\begin{proof}
%	Let $(P_n(x))_{n \ge 1}$ be a splitting sequence of $P(x)$ whose binary string contains infinitely many copies of $S$. First, observe that taking square lifting respects both the leading and the constant coefficients of any polynomial. Now fix $n \in \nn$ such that $P_n(x)$ is not irreducible. If $P_n(x) = A(x)B(x)$ is the splitting in part~(2) of Square Lemma, then the coefficients of $x^k$ in both $A(x)$ and $B(x)$ have the same magnitude for every $k \in \nn$. Thus the magnitude of the leading and constant coefficients of $P_{n+1}(x)$ are the squares of the magnitudes of the leading and constant coefficients of $P_n(x)$, respectively. Therefore the fact that the binary sequence of $(P_n(x))_{n \ge 1}$ contains infinitely many copies of $S$ guarantees that both the leading and the constant coefficient of $P(x)$ belong to $\{\pm 1\}$, whence $P(x)$ is associate to a nice polynomial.
%\end{proof}

We are in a position now to determine the irreducible polynomials in $\zz[x]$ that are not atomic in $\qq[M_{1/2}]$.

\begin{theorem} \label{thm:irreducible polynomials that are not atomic in $Q[M_1/2]$}
	If an irreducible polynomial $P(x) \in \zz[x]$ is not atomic in the monoid algebra $\qq[M_{1/2}]$, then $P(x) \in \{\pm x, \pm \Phi_n(x) : n \in 2\nn_0 + 1\}$. %or $P(x) = \Phi_n(x)$ for an odd integer $n$.
\end{theorem}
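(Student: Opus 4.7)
The plan is to chain together Proposition~\ref{prop:characterization of polynomials in Z[x] not atomic in Q[M_{1/2}]}, Lemma~\ref{lem:splitting sequences and NP}, Proposition~\ref{prop:nice polynomials}(2), and Lemma~\ref{lem:lemma 13}(2). Concretely, the goal is to extract a splitting sequence of $P(x)$ whose set of degrees is bounded, so that Lemma~\ref{lem:lemma 13}(2) can be applied directly.

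First I would invoke Proposition~\ref{prop:characterization of polynomials in Z[x] not atomic in Q[M_{1/2}]} to produce a splitting sequence $(P_n(x))_{n \ge 1}$ of $P(x)$ whose binary string $\mathfrak{s} = s_1 s_2 \ldots$ contains infinitely many copies of the letter $S$. Since $P(x)$ is irreducible in $\zz[x]$, if $P(x)$ is a monomial then $P(x) = \pm x$ and we are done; otherwise, Lemma~\ref{lem:splitting sequences and NP}(2) forces $P(x)$ to be a nice polynomial, which places us in the hypothesis of Proposition~\ref{prop:nice polynomials}.

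Next I would apply Proposition~\ref{prop:nice polynomials}(2) to pick $k \in \nn$ for which $P(x)$ fails to be $k$-special. Unpacking the definition on our particular splitting sequence, and writing $a_1 < a_2 < \ldots$ for the positions of the $S$'s in $\mathfrak{s}$ together with $a_0 := 0$ and $b_n := a_n - a_{n-1}$, this says that the sequence $(b_n)_{n \ge 1}$ contains fewer than $k$ indices $n$ with $b_n > 2$. Because $\mathfrak{s}$ has infinitely many copies of $S$, the sequence $(b_n)_{n \ge 1}$ is infinite, so $b_n = 2$ for all sufficiently large $n$.

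Finally I would translate this information into a bound on $\{\deg P_n(x) : n \in \nn\}$. By Lemma~\ref{lem:spliting sequences}(1), an $L$-step doubles the degree while an $S$-step halves it, so the block consisting of $b_n - 1$ copies of $L$ followed by a single $S$ multiplies the degree by $2^{b_n - 2}$. As soon as every subsequent $b_n$ equals $2$, these blocks are degree-preserving, and the degrees then oscillate within a finite window. Hence $\{\deg P_n(x) : n \in \nn\}$ is bounded, and Lemma~\ref{lem:lemma 13}(2) delivers the claim that $P(x) \in \{\pm x, \pm \Phi_n(x) : n \in 2\nn_0+1\}$. The substantive obstacle is not in this final step but upstream, in Proposition~\ref{prop:nice polynomials}(2), whose proof invoked the entire apparatus of $n$-good polynomials and the Match-and-Lift Lemma; once that proposition is available, the present theorem reduces to the short combinatorial bookkeeping sketched above.
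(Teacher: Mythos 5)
Your proposal is correct and runs through essentially the same chain as the paper: the characterization of non-atomic irreducibles, Lemma~\ref{lem:splitting sequences and NP}, Proposition~\ref{prop:nice polynomials}(2), and Lemma~\ref{lem:lemma 13}(2), the paper merely organizing it as a contradiction (assume $P(x)$ is not in the target set, use Lemma~\ref{lem:lemma 13} to get unbounded degrees, hence infinitely many $LLS$ blocks, hence $k$-specialness for every $k$, contradicting Proposition~\ref{prop:nice polynomials}(2)). Your contrapositive phrasing, with the block bookkeeping showing that $b_n = 2$ eventually forces $\{\deg P_n(x) : n \in \nn\}$ to be bounded, is the same combinatorial content in the direct direction.
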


\begin{proof}
	Suppose, by way of contradiction, that $P(x) \notin \{\pm x, \pm \Phi_n(x) : n \in 2\nn_0 + 1\}$. Thus, $P(x)$ is not a monomial. Since $P(x)$ is not atomic in $\qq[M_{1/2}]$, Proposition~\ref{prop:characterization of polynomials in Z[x] not atomic in Q[M_{1/2}]} guarantees the existence of a splitting sequence $(P_n(x))_{n \ge 1}$ whose binary sequence $\mathfrak{s} = s_1 s_2 \ldots$ has infinitely many copies of $S$. Since $P(x)$ is not a monomial, it follows from Lemma~\ref{lem:splitting sequences and NP} that $P(x)$ is a nice polynomial.
%	We proceed to argue that $P(x)$ is a nice polynomial. First, observe that taking square lifting respects both the leading and the constant coefficients of any polynomial. Now fix $n \in \nn$ such that $P_n(x)$ is not irreducible in $\zz[x]$. If $P_n(x) = A(x)B(x)$ is the splitting in part~(2) of Square Lemma, then the coefficients of $x^k$ in both $A(x)$ and $B(x)$ have the same magnitude for every $k \in \nn$. Thus, the magnitudes of the leading and constant coefficients of $P_{n+1}(x)$ are the square roots of the magnitudes of the leading and constant coefficients of $P_n(x)$, respectively. Therefore the fact that the binary sequence of $(P_n(x))_{n \ge 1}$ contains infinitely many copies of $S$ guarantees that both the leading and the constant coefficient of $P(x)$ belong to $\{\pm 1\}$, whence $P(x)$ is a nice polynomial. %Then Lemma~\ref{lem:sufficient conditions for nice polynomial} allows us to assume that $P(x)$ is a nice polynomial (after replacing $P(x)$ by $-P(x)$ if necessary). 
Thus, it follows from Lemma~\ref{lem:lemma 13} that the set $\{\deg P_n(x) : n \in \nn\}$ is not bounded. This in turn enforces the existence of infinitely many copies of the string LLS in $\mathfrak{s}$. As a result, $P(x)$ is $k$-special for infinitely many $k \in \nn$, which implies that $P(x)$ is $k$-special for every $k \in \nn$. However, the fact that $P(x)$ is a nice polynomial that is $k$-special for every $k \in \nn$ contradicts part~(2) of Proposition~\ref{prop:nice polynomials}.
\end{proof}

\medskip
%%%%%%%%%%%%%%%%%%%%%%%%%%%%%%%%%%%%
\subsection{Ascending Chain of Principal Ideals in $\qq[M_{3/4}]$}

The machinery we have developed so far can be applied to gain a better understanding of ascending chains of principal ideals inside the monoid algebra $\qq[M_{3/4}]$. In this direction, we prove that the monoid algebra $\qq[M_{3/4}]$ satisfies the almost ACCP, which gives a partial answer to \cite[Conjecture~4.12]{fG22}.

\begin{lemma} \label{lem:factorizing polynomial expressions in $Q[M_{1/2}]$}
	Let $P(x)$ be a nonzero polynomial expression in $\qq[M_{1/2}]$ %with $\emph{ord} \, P(x) = 0$, 
	and let $N$ be the minimum nonnegative integer such that $P\big( x^{2^N} \big) \in \qq[x]$. If the infimum of the set
	\[
		\big\{\deg Q(x) : Q_n(x) \text{ is not a monomial and divides }P(x) \text{ in } \qq[M_{1/2}] \big\}
	\]
	is zero, then the following statements hold.
	\begin{enumerate}
		\item If $\emph{ord} \, P(x) = 0$, then $P\big(x^{2^N}\big) = \prod_{i=1}^t \Phi_{n_i}(x)$ for some $n_1, \dots, n_t \in 2\nn_0 + 1$.
		\smallskip
		
		\item If $n \in \nn_{\ge N}$, then $P\big( x^{2^n}\big) = x^{k_n} \prod_{i=1}^t \Phi_{n_i}(x)$ for some $k_n \in \nn_0$ and $n_1, \dots, n_t \in \nn$. %$P\big( x^{2^N}\big) = x^{2^N \emph{ord} \, P(x)} \prod_{i=1}^t \Phi_{n_i}(x)$ for some $n_1, \dots, n_t \in \nn$.
	\end{enumerate}
\end{lemma}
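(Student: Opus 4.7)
The plan is to factor $P(x^{2^N})$ in $\zz[x]$ via Gauss's lemma and the UFD property, then invoke Theorem~\ref{thm:irreducible polynomials that are not atomic in $Q[M_1/2]$} to identify which irreducible factors can generate non-monomial divisors of arbitrarily small degree in $\qq[M_{1/2}]$. For part (1), write $P(x^{2^N}) = c \prod_{i=1}^s A_i(x)^{e_i}$ as a product of distinct irreducibles in $\zz[x]$ with $c \in \qq^\times$. The hypothesis $\text{ord}\,P = 0$ forces the constant term of $P(x^{2^N})$ to be nonzero, so no $A_i$ equals $\pm x$; by Theorem~\ref{thm:irreducible polynomials that are not atomic in $Q[M_1/2]$}, each $A_i$ is then either atomic in $\qq[M_{1/2}]$ or of the form $\pm\Phi_{n_i}$ for some odd $n_i$.

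The core of part (1) is ruling out atomic factors. Suppose for contradiction that some $A_j$ is atomic in $\qq[M_{1/2}]$; then $A_j$ admits a finite factorization $A_j = L_1 \cdots L_\ell$ into irreducibles of $\qq[M_{1/2}]$, each of positive degree. Since $\qq[M_{1/2}]$ is an AP-domain, Corollary~\ref{cor:atomic element characterization in $Q[M_{1/2}]$} guarantees that $A_j$ has only finitely many non-associate divisors there, whose degrees form a finite set with minimum some $\delta_j > 0$. I would then leverage this lower bound to show that any non-monomial divisor of $P$ containing an $A_j$-factor has degree bounded below by a positive constant depending on $\delta_j$ and $N$; combined with the inf-zero hypothesis, this forces every small-degree non-monomial divisor to avoid the $A_j$-block, but the unique factorization structure in the AP-domain then drives $A_j$ out of any minimal decomposition of $P$, forcing every irreducible factor of $P(x^{2^N})$ to be a cyclotomic polynomial with odd index.

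For part (2), I would strip off the monomial part: write $P(x) = x^k \tilde Q(x)$ with $k := \text{ord}\,P$ and $\text{ord}\,\tilde Q = 0$. Since every non-monomial divisor of $P$ factors (up to units) as $x^j$ times a non-monomial divisor of $\tilde Q$ for some $j \in [0,k] \cap M_{1/2}$, the inf-zero hypothesis transfers to $\tilde Q$. Part (1) applied to $\tilde Q$ gives $\tilde Q(x^{2^{N'}}) = \prod_{i=1}^t \Phi_{n_i}(x)$ for odd $n_i$, where $N' \le N$ is the minimum integer with $\tilde Q(x^{2^{N'}}) \in \qq[x]$. For $n \ge N$, compute
\[
    P(x^{2^n}) \,=\, x^{k \cdot 2^n} \tilde Q(x^{2^n}) \,=\, x^{k \cdot 2^n} \prod_{i=1}^t \Phi_{n_i}\!\big(x^{2^{n-N'}}\big),
\]
and apply Lemma~\ref{lem:cyclotomic polynomial product identity} to expand each $\Phi_{n_i}(x^{2^{n-N'}}) = \prod_{j=0}^{n-N'} \Phi_{n_i \cdot 2^j}(x)$. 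Rewriting gives $P(x^{2^n}) = x^{k_n} \prod_\ell \Phi_{m_\ell}(x)$ for $k_n := k \cdot 2^n \in \nn_0$ and positive integers $m_\ell$, as claimed.

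The main obstacle is the contradiction step in part (1): rigorously showing that an atomic irreducible factor $A_j \in \zz[x]$ induces a positive lower bound on the degrees of non-monomial divisors of $P$ in $\qq[M_{1/2}]$. This calls for careful bookkeeping of how divisors of $P$ decompose across the factorization $P(x^{2^N}) = c \prod A_i^{e_i}$, separating contributions from atomic blocks (with a positive degree floor coming from $\delta_j$) from those coming from the non-atomic cyclotomic blocks (which can be made arbitrarily small via Lemma~\ref{lem:cyclotomic polynomial product identity}). The crux is to verify that the atomic block cannot be hidden from every sufficiently small-degree non-monomial divisor, forcing the contradiction with the inf-zero hypothesis.
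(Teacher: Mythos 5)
Your part~(2) is fine and is essentially the paper's own reduction: strip off $x^{\mathrm{ord}\,P}$, transfer the small non-monomial divisors to the order-zero part, apply part~(1), and expand $\Phi_{n_i}\big(x^{2^{n-N'}}\big)$ via Lemma~\ref{lem:cyclotomic polynomial product identity}. The problem is part~(1), where the step you defer as the ``main obstacle'' --- ruling out an irreducible factor $A_j$ of $P\big(x^{2^N}\big)$ that is atomic in $\qq[M_{1/2}]$ but not an odd-index cyclotomic --- is not a bookkeeping matter: it is the entire content of the lemma, and the route you sketch for it points the wrong way. From ``every non-monomial divisor of sufficiently small degree avoids the $A_j$-block'' no contradiction follows: there is no ``minimal decomposition'' of $P$ from which $A_j$ could be ``driven out,'' and nothing prevents $A_j$ from dividing $P$ while all the small divisors come from a cyclotomic co-factor. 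Concretely, $(x+2)(x-1)$ admits the non-monomial divisors $x^{1/2^n}+1$ of degree $2^{-n}$ in $\qq[M_{1/2}]$ (they divide $x-1 = \big(x^{1/2^n}\big)^{2^n}-1$), while $x+2$ is irreducible, atomic, and non-cyclotomic; so an argument that merely splits each small divisor across the blocks and notes that its $A_j$-component must be small (or trivial) cannot by itself expel $A_j$, and your appeal to ``the unique factorization structure in the AP-domain'' does not supply the missing contradiction.

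The paper's proof goes in the opposite direction at exactly this point: for each small divisor $Q_n$ it passes to a level $\qq\big[x^{1/2^{M_n}}\big]$ containing both $Q_n$ and $P/Q_n$, uses the coprimality in $\qq[x]$ of $P_1'\big(x^{2^{M_n-N}}\big)^m$ and $P_2'\big(x^{2^{M_n-N}}\big)$ to factor $Q_n'$ as the product of its gcd's with the two blocks, and then argues that the rescaled $P_1'$-components are divisors of $P_1'(x)^m$ of degree tending to zero giving infinitely many non-associate divisors, so that Corollary~\ref{cor:atomic element characterization in $Q[M_{1/2}]$} forces $P_1'(x)^m$ to be non-atomic, whence Theorem~\ref{thm:irreducible polynomials that are not atomic in $Q[M_1/2]$} and $\mathrm{ord}\,P=0$ finish the argument. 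In other words, the crux is to show that the small divisors meet the $P_1'$-block \emph{nontrivially} infinitely often --- precisely the assertion your proposal leaves open, and, as the example above shows, precisely the delicate point (there the block components are all trivial, so the inf-zero hypothesis alone cannot be exploited by avoidance-type reasoning; in the intended application, Proposition~\ref{prop:ACCP-supported PE are ACCP}, the divisors arise as consecutive quotients of a chain and carry more structure than your sketch uses). As it stands, your proposal reproduces the easy outer shell of the paper's argument but not the step that makes it work.
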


\begin{proof}
	(1) Set $P'(x) := P\big( x^{2^N} \big)$. Let $(Q_n(x))_{n \ge 1}$ be a sequence consisting of monic polynomial expressions in $\qq[M_{1/2}] \setminus \qq$ dividing $P(x)$ in $\qq[M_{1/2}]$ with $\lim_{n \to \infty} \deg Q_n(x) = 0$. Clearly, one can assume that $\text{ord} \, Q_n = 0$ for every $n \in \nn$. For each $n \in \nn$, let $M_n$ be the minimum nonnegative integer such that both polynomial expressions $Q_n(x)$ and $P(x)/Q_n(x)$ belong to $\qq\big[x^{1/2^{M_n}}\big]$, and set
	\[
		P''(x) := P\big( x^{2^{M_n}}\big) \quad \text{ and } \quad Q_n'(x) := Q_n\big( x^{2^{M_n}}\big).
	\]
	It is clear that $M_n \ge N$ and $Q'_n(x) \mid_{\qq[x]} P''(x)$ for every $n \in \nn$.
	\smallskip
	
	Assume, by way of contradiction, that one of the factors in the only factorization of $P'(x)$ into irreducibles in $\qq[x]$ is not associate to $\Phi_{2n+1}(x)$ for any $n \in \nn_0$. Then we can write
    \[
        P'(x) = P_1'(x)^m P_2'(x)
    \]
    in $\qq[x]$ for some $m \in \nn$ and $P'_1(x), P'_2(x) \in \qq[x]$ such that $P_1'(x) \in \mathcal{A}(\zz[x]) \setminus \{\pm \Phi_{2n+1}(x) : n \in \nn_0\}$ and $P'_1(x) \nmid_{\qq[x]} P'_2(x)$. Furthermore, we can assume that the leading coefficient of $P'_1(x)$ is positive. Now set
	\[
		P_1(x) := P'_1\big( x^{1/2^N} \big) \quad \text{ and } \quad P_2(x) := P'_2\big( x^{1/2^N} \big).
	\]
	Observe that $P_1(x)^m P_2(x) = P(x)$ in $\qq[M_{1/2}]$, %Then take the preimages of $P_1'(x)$ and $P_2'(x)$ to obtain $P_1(x)$ and $P_2(x)$, fixed irrespective of $Q(x)$. 
	%Now fix $n \in \nn$, and then take the smallest nonnegative integer~$M$ such that $P_0(x), Q_n(x) \in \qq\big[ x^{1/2^M} \big]$. It is clear that $M \ge N$. 
	and so $P''(x) = P_1'\big(x^{2^{M_n-N}}\big)^m P_2'\big(x^{2^{M_n-N}}\big)$. As $P'_1(x)^m$ and $P'_2(x)$ are coprime in $\qq[x]$ it follows that $P'_1\big(x^{2^{M_n-N}}\big)^m$ and $P'_2\big(x^{2^{M_n-N}}\big)$ are also coprime in $\qq[x]$ (to see this, it suffices to compose any B\'ezout's identity for $P'_1(x)^m$ and $P'_2(x)$ with $x^{2^{M_n - N}}$). Take
	\[
		Q_{n,1}'(x) := \text{gcd}_{\qq[x]}\big( Q_n'(x), P'_1\big(x^{2^{M_n-N}}\big)^m \big) \quad \text{ and } \quad Q_{n,2}'(x) := \text{gcd}_{\qq[x]}\big( Q_n'(x),P'_2\big(x^{2^{M_n-N}}\big)\big).
	\]
	%to be the greatest monic common divisors of the sets $\{Q_n'(x), P'_1(x^{2^{M_n-N}})^m \}$ and $\{Q_n'(x),P'_2(x^{2^{M_n-N}}) \}$ in $\qq[x]$, respectively. 
	Since $P'_1\big(x^{2^{M_n-N}}\big)^m$ and $P'_2\big(x^{2^{M_n-N}}\big)$ are coprime with product $P''(x)$, it follows from the divisibility relation $Q'_n(x) \mid_{\qq[x]} P''(x)$ that $Q_n'(x) = Q_{n,1}'(x) Q_{n,2}'(x)$ in $\qq[x]$. %\textcolor{red}{Ben: Why this equation holds? Maybe we can argue $P'_1(x^{2^{M - N}})$ and $P'_2(x^{2^{M - N}})$ are coprime?} 
	Now set
	\[
		Q_{n,1}(x) := Q'_{n,1}\big(x^{1/2^{M_n}} \big) \quad \text{ and } \quad Q_{n,2}(x) := Q'_{n,2}\big(x^{1/2^{M_n}} \big),
	\]
	and note that $Q_n(x) = Q_{n,1}(x) Q_{n,2}(x)$ in $\qq[M_{1/2}]$. After writing $Q'_{n,1}(x) Q(x) = P'_1\big( x^{2^{M_n-N}}\big)^m$ for some $Q(x) \in \qq[x]$, we see that
	\[
		Q_{n,1}\big(x^{2^N} \big) Q\big(x^{1/2^{M_n-N}} \big) = Q'_{n,1}\big(x^{1/2^{M_n-N}} \big) Q\big(x^{1/2^{M_n-N}} \big) = P'_1(x)^m,
	\]
	and so $Q_{n,1}\big(x^{2^N} \big)$ divides $P'_1(x)^m$ in $\qq[M_{1/2}]$. %Hence $P'_1(x)^m$ has infinitely many non-associate divisors in $\qq[M_{1/2}]$. 
	Because $Q'_{n,1}(x) \mid_{\qq[x]} Q'_n(x)$, it follows that $\deg Q_{n,1}(x) = \deg Q'_{n,1} \big( x^{1/2^{M_n}}\big) \le \deg Q'_n\big( x^{1/2^{M_n}}\big) = \deg Q_n(x)$. Thus, from the fact that $\lim_{n \to \infty} \deg Q_n(x) = 0$, we can now infer that $\lim_{n \to \infty} \deg Q_{n,1}\big( x^{2^N}\big) = 0$ (as $N$ does not depend on $n$). As a consequence, $\big\{ Q_{n,1}\big(x^{2^N} \big) : n \in \nn \big\}$ contains infinitely many non-associate divisors of $P'_1(x)^m$ in $\qq[M_{1/2}]$, and so it follows from Corollary~\ref{cor:atomic element characterization in $Q[M_{1/2}]$} that $P'_1(x)^m$ is not atomic in $\qq[M_{1/2}]$. This implies that $P'_1(x)$ is not atomic in $\qq[M_{1/2}]$. Hence Theorem~\ref{thm:irreducible polynomials that are not atomic in $Q[M_1/2]$} ensures that $P'_1(x) = x$, which contradicts that $\text{ord} \, P'(x) = 0$. % Therefore Claim 1 is established. %\textcolor{red}{Ben: Why? For example, why $Q_{n, 1}(x^{2^N})$ cannot all be $x^2 - x + 1$?} Thus, $P'_1(x)^m$ is not atomic in $\qq[M_{1/2}]$ in light of Corollary~\ref{cor:atomic element characterization in $Q[M_{1/2}]$}. This implies that $P'_1(x)$ is not atomic $\qq[M_{1/2}]$. Hence it follows from Theorem~\ref{thm:irreducible polynomials that are not atomic in $Q[M_1/2]$} that $P'_1(x) = x$, which contradicts that $\text{ord} \, P'(x) = 0$. Therefore Claim 1 is established.
	%the infimum of the sequence $(\deg Q_{n,1}(x))_{n \ge 1}$ is positive, which is not possible because $\lim_{n \to \infty} \deg Q_n(x) = 0$. Therefore the Claim 1 is established.
	\smallskip

% Ben'n question: Why can't $Q_n$ (in the previous paragraph) be $1$?
% Alan-Alex's answer: The lemma statement discusses the set of degrees of all *non-monomial* divisors of $P$, so $Q$ cannot be 1 by definition. It would be helpful to clarify that $Q_n$ are not monomials in the beginning of the proof.
    
	(2) Set $P'(x) := P(x)/x^q$, where $q := \text{ord} \, P(x)$, and let $m$ be the minimum nonnegative integer such that $P'\big(x^{2^m}\big) \in \qq[x]$. It is clear that $m \le N$. Now observe that if $(Q_n(x))_{n \ge 1}$ is a sequence of non-monomials in $\qq[M_{1/2}] \setminus \qq$ dividing $P(x)$ in $\qq[M_{1/2}]$ with $\lim_{n \to \infty} \deg Q_n(x) = 0$, then $(Q_n(x)/x^{\text{ord} Q_n(x)})_{n \ge 1}$ is a sequence of non-monomials in $\qq[M_{1/2}]$ dividing $P'(x)$ in $\qq[M_{1/2}]$ with $\lim_{n \to \infty} \deg \big( Q_n(x)/x^{\text{ord} Q_n(x)} \big) = 0$. Hence $P'(x)$ also satisfies the hypothesis of the lemma. Since $\text{ord} \, P'(x) = 0$, by part~(1) we can write $P\big( x^{2^m}\big) = x^{2^mq} \prod_{i=1}^s \Phi_{m_i}(x)$ for some $m_1, \dots, m_s \in 2\nn_0 + 1$. Now if $n \in \nn_{\ge N}$, then $2^nq \in \nn_0$ and $P\big( x^{2^n}\big) = x^{2^nq} \prod_{i=1}^s \Phi_{m_i}\big(x^{2^{n-m}}\big)$, and so we are done in light of Lemma~\ref{lem:cyclotomic polynomial product identity}.
\end{proof}

Nonzero polynomial expressions in $\qq[M_{3/4}]$ having a monomial whose exponent satisfies the ACCP in $M_{3/4}$ play an essential role in the proof of the main result of this section, Theorem~\ref{thm:main}. We call such polynomial expressions \emph{ACCP-supported}. As the following proposition indicates, every ascending chain of principal ideals in $\qq[M_{3/4}]$ starting at an ACCP-supported polynomial expression must stabilize.

\begin{prop} \label{prop:ACCP-supported PE are ACCP}
	Every ACCP-supported polynomial expression in $\qq[M_{3/4}]$ satisfies the ACCP.
\end{prop}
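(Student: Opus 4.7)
The plan is to argue by contradiction. Suppose there is a strictly ascending chain of principal ideals $f\qq[M_{3/4}] = f_0\qq[M_{3/4}] \subsetneq f_1\qq[M_{3/4}] \subsetneq \cdots$, and write $f_i = f_{i+1} g_{i+1}$ with each $g_{i+1} \in \qq[M_{3/4}] \setminus \qq[M_{3/4}]^{\times}$. Fix an exponent $m \in \text{supp} \, f$ satisfying the ACCP in $M_{3/4}$; the aim is to use $m$ to pin down a common monomial exponent in every $f_i$ and convert this into a contradiction.

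First I would extract a chain of $M_{3/4}$-divisors of $m$. From the factorization $f = f_i \cdot (f/f_i)$, matching the coefficient of $x^m$ yields an $a_i \in \text{supp} \, f_i$ with $a_i \mid_{M_{3/4}} m$. Feeding $a_i$ through $f_i = f_{i+1} g_{i+1}$, a decomposition $a_i = a_{i+1} + b'$ with $a_{i+1} \in \text{supp} \, f_{i+1}$ and $b' \in \text{supp} \, g_{i+1}$ gives $a_{i+1} \mid_{M_{3/4}} a_i$. The resulting ascending chain of ideals $\{a_i + M_{3/4}\}$ dominates $m + M_{3/4}$, so by ACCP at $m$ it stabilizes; because $M_{3/4}$ is reduced, one has $a_i = a$ for all $i \ge N_0$. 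For such $i$, the forced identity $a = a + b'$ places $0$ in $\text{supp} \, g_{i+1}$, so $\text{ord} \, g_{i+1} = 0$ and the orders $\text{ord} \, f_i$ stabilize.

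Next I would translate the non-stabilization into a structural constraint on $f$. Since each $g_{i+1}$ (for $i \ge N_0$) is a non-unit with $0 \in \text{supp} \, g_{i+1}$, it must be non-monomial with $\deg g_{i+1} > 0$. Telescoping $\deg f_i = \deg f_{i+1} + \deg g_{i+1}$ against $\deg f_i \ge 0$ yields $\sum_i \deg g_{i+1} \le \deg f < \infty$, so $\deg g_{i+1} \to 0$. Because every $g_{i+1}$ divides $f$ in $\qq[M_{1/2}]$, the infimum of degrees of non-monomial $\qq[M_{1/2}]$-divisors of $f$ is zero, and Lemma~\ref{lem:factorizing polynomial expressions in $Q[M_{1/2}]$} forces the cyclotomic representation $f(x^{2^N}) = x^k \prod_{j=1}^{t} \Phi_{n_j}(x)$ for some nonnegative integers $N, k$ and positive integers $n_j$.

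The remaining step, which I expect to be the main obstacle, is to extract a contradiction from this cyclotomic form together with the constraints $f \in \qq[M_{3/4}]$ and the ACCP-supported hypothesis. The cyclotomic form alone is compatible with ACCP-supported (as $f = x - 1 = \Phi_1(x)$ illustrates), so the argument must exploit the arithmetic rigidity of $M_{3/4}$. Via Lemma~\ref{lem:cyclotomic polynomial product identity}, each $\Phi_{n_j}(x^{1/2^N})$ further factors in $\qq[M_{1/2}]$ into products of the form $\Phi_{n_j \cdot 2^\ell}(x^{1/2^{N+\ell}})$, whose exponents involve dyadic fractions that rarely lie in $M_{3/4}$ (a base-$3/4$ computation shows $1/2^M \in M_{3/4}$ only for $M = 0$). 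Combining this severe restriction with the stabilized exponent $a$ from the first step should show that only finitely many non-associate non-monomial $\qq[M_{3/4}]$-divisors of $f$ can exist, contradicting the infinite supply of non-associate $g_{i+1}$'s produced by the chain.
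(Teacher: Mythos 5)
Your preparatory steps are correct and run parallel to the paper's own setup: tracking an exponent $m \in \text{supp}\, f$ that satisfies the ACCP through the chain to force, eventually, $0 \in \text{supp}\, g_{i+1}$ (the paper handles this more tersely, noting that ACCP-supportedness allows only finitely many monomial quotients, but your divisor-chain argument is a legitimate way to justify it), then deducing $\deg g_{i+1} \to 0$, and then invoking Lemma~\ref{lem:factorizing polynomial expressions in $Q[M_{1/2}]$} to conclude that $f\big(x^{2^N}\big)$ is a monomial times a product of cyclotomic polynomials. Up to this point you and the paper agree.

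The genuine gap is the final step, which you yourself flag as ``the main obstacle'': no contradiction is actually derived from the cyclotomic form. Your plan --- to show that such an $f$ has only finitely many non-associate non-monomial divisors in $\qq[M_{3/4}]$ --- is neither proved nor clearly true, and the one concrete input you offer (that $1/2^{M} \in M_{3/4}$ only for $M=0$) is far too weak: the divisors produced by the chain are products of many cyclotomic factors evaluated at $x^{1/2^{M_n}}$, and their supports are \emph{sums} of such exponents, so membership of a single dyadic fraction in $M_{3/4}$ controls nothing. The paper's contradiction is instead quantitative and uses machinery absent from your sketch: after normalizing, the quotient $Q'_n(x)$ is a product of cyclotomics and hence palindromic; Lemma~\ref{lem:power sum symmetric functions} (via Newton's identities) identifies $k_n := \text{ord}\big(Q'_n(x) - Q'_n(0)\big)$ with the least $k$ whose power sum of roots is nonzero; a roots-of-unity argument establishes the key divisibility $k_n \mid 2^{M_n - N} u$, where $u$ is fixed so that $f\big(x^{2^N}\big) \mid_{\zz[x]} (x^u-1)^v x^w$; and finally $k_n = 2^{M_n}(q_n - o_n)$, where $q_n - o_n$ is a difference of elements of $M_{3/4}$ tending to $0$, so that for large $n$ both are sums of atoms $\big(\tfrac34\big)^i$ with $i > v_3(u)$, forcing $v_3(k_n) > v_3(u)$ and contradicting $k_n \mid 2^{M_n-N}u$. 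It is precisely this exploitation of the $3$-adic valuation of small elements of $M_{3/4}$, combined with palindromicity and the divisibility claim, that makes the proof work; without it (or a genuine substitute), your argument is incomplete.
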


\begin{proof}
	Suppose, towards a contradiction, that $(P_n(x) \qq[M_{3/4}])_{n \ge 0}$ is a non-stabilizing ascending chain of principal ideals with $P_0(x)$ being an ACCP-supported polynomial expression in $\qq[M_{3/4}]$. Assume, without loss of generality, that $P_n(x)$ is monic for every $n \in \nn_0$. Furthermore, we can assume that
    \[
        Q_{n+1}(x) := \frac{P_n(x)}{P_{n+1}(x)}
    \]
    is not constant for any $n \in \nn_0$. Since $P_0(x)$ is an ACCP-supported polynomial expression, only finitely many terms of the sequence $(Q_n(x))_{n \ge 1}$ can be monomials. Hence, after dropping finitely many initial terms from the sequence $(P_n(x))_{n \ge 0}$, we can assume that $Q_n(x)$ is not a monomial for any $n \in \nn$. Since the sequence $(\deg P_n(x))_{n \ge 0}$ is decreasing, it must converge, whence $\lim_{n \to \infty} \deg Q_n(x) = 0$. For each $n \in \nn$, set
	\[
		o_n := \text{ord} \, Q_n(x) \quad \text{ and } \quad q_n := \min \big(\text{supp} \, Q_n(x) \setminus \{o_n\} \big)
	\]
	and observe that $q_n$ is well defined because $Q_n(x)$ is not a monomial. Note that both sequences $(o_n)_{n \ge 1}$ and $(q_n)_{n \ge 1}$ converge to zero.
	
	Take $N$ to be the smallest nonnegative integer such that $P_0(x) \in \qq\big[ x^{1/{2^N}} \big]$ and, for each $n \in \nn$, take $M_n$ to be the smallest nonnegative integer such that both $Q_n(x)$ and $P_0(x)/Q_n(x)$ belong to $\qq\big[x^{1/2^{M_n}}\big]$.  Clearly, $M_n \ge N$. Now, for each $n \in \nn$, set $p := \text{ord} \, P_0\big( x^{2^{M_n}}\big)$ and $o'_n := \text{ord} \, Q_n\big( x^{2^{M_n}}\big)$, set
	\[
		P'(x) := P_0\big( x^{2^{M_n}}\big)/x^p \quad \text{ and } \quad Q_n'(x) := Q_n\big( x^{2^{M_n}}\big)/x^{o'_n},
	\]
	and observe that both $Q'_n(x)$ and $P'(x)$ are monic polynomials in $\qq[x]$ such that $Q'_n(x) \mid_{\qq[x]} P'(x)$.

	Because $\big(Q_n\big( x^{2^N} \big) \big)_{n \ge 1}$ is a sequence with $\inf\big\{\!\deg Q_n\big( x^{2^N}\big) : n \in \nn \big\} = 0$ whose terms divide $P_0\big(x^{2^N} \big)$ in $\qq[M_{1/2}]$, it follows from Lemma~\ref{lem:factorizing polynomial expressions in $Q[M_{1/2}]$} that
	\begin{equation} \label{eq:almost-product of cyclotomic}
		P_0\big(x^{2^{M_n}} \big) = x^p \prod_{i=1}^t \Phi_{m_i}(x)
	\end{equation}
	for some $m_1, \dots, m_t \in \nn$. Now fix $n \in \nn$. It follows from~\eqref{eq:almost-product of cyclotomic} that both $P'(x)$ and $Q'_n(x)$ can be written as products of cyclotomic polynomials. Indeed, after permuting the cyclotomic factors in~\eqref{eq:almost-product of cyclotomic}, we can assume that $Q'_n(x) = \prod_{i=1}^j \Phi_{m_i}(x)$ for some $j \in \ldb 1,t \rdb$. Because cyclotomic polynomials are palindromic and $Q'_n(x)$ is a product of cyclotomic polynomials, $Q'_n(x)$ is palindromic.
 
    Set $d_n := \deg Q'_n(x)$, and let $r_1, \dots, r_{d_n}$ be the roots of $Q'_n(x)$. Let $k_n$ be the smallest positive integer such that the coefficient of $x^{d_n-k_n}$ in $Q'_n(x)$ is nonzero. Then in light of Lemma~\ref{lem:power sum symmetric functions}, we see that $k_n$ is also the smallest positive integer such that $\sum_{i=1}^{d_n} r_i^{k_n}  \neq 0$. As $Q'_n(x)$ is palindromic, the equality $k_n = \text{ord}(Q'_n(x) - Q'_n(0))$ holds. Now take $u,v,w \in \nn_0$ such that $P_0\big(x^{2^N}\big) \mid_{\zz[x]} (x^u - 1)^v x^w$ (observe that the exponents $u,v,w$ do not depend on $n$). Since $Q_n\big(x^{2^{M_n}}\big) \mid_{\qq[x]} P_0\big( x^{2^{M_n}}\big)$, we see that $Q'_n(x) x^{o'_n} \mid_{\qq[x]}  \big( x^{2^{M_n-N}u} - 1 \big)^v x^{2^{M_n-N}w}$ and, therefore, $Q'_n(x) \mid_{\qq[x]} \big( x^{2^{M_n-N}u} - 1 \big)^v$.
	\medskip
	
	\noindent \textsc{Claim.} $k_n \mid 2^{M_n-N} u$.
	\smallskip
	
	\noindent \textsc{Proof of Claim.} Set $\ell := 2^{M_n-N} u$. Now suppose, for the sake of a contradiction, that $k_n \nmid \ell$. Set $k'_n = \gcd(k_n, \ell) < k_n$. Since  $\prod_{i=1}^j \Phi_{m_i}(x) = Q'_n(x) \mid_{\qq[x]} \big(x^\ell - 1\big)^u$ , for each $i \in \ldb 1,j \rdb$, the identity $x^\ell - 1 = \prod_{d \mid \ell} \Phi_d(x)$ ensures that $m_i \mid \ell$, and so $\gcd(k'_n, m_i) = \gcd(k_n, m_i)$. Now fix $m \in \{m_1, \dots, m_j\}$, let~$r$ be a primitive $m$-th root of unity, and let $r^{e_1}, \dots, r^{e_{\varphi(m)}}$ be the roots of the cyclotomic polynomial $\Phi_m(x)$. After taking $k \in \nn$ and setting $g := \gcd(m,k)$, one sees that $r^k$ is a primitive $(m/g)$-th root of unity, and so the fact that $\gcd(e_i, m/g) = \gcd(e_i,m) = 1$ for every $i \in \ldb 1, \varphi(m) \rdb$ ensures that $r^{ke_1}, \dots, r^{ke_{\varphi(m)}}$ are roots of $\Phi_{m/g}(x)$. Therefore the map $f \colon (\zz/m\zz)^\times \to (\zz/(m/g)\zz)^\times$ defined by $j + m\zz \mapsto j + (m/g)\zz$ is a well-defined group homomorphism. The homomorphism $f$ is clearly surjective. This, along with the fact that all the preimages of $f$ have the same size, guarantees that $r^{ke_1}, \dots, r^{ke_{\varphi(m)}}$ are the roots of the polynomial $\Phi_{m/g}(x)^{\varphi(m)/\varphi(m/g)}$, accounting for repetitions. As a result, from the fact that $\gcd(k'_n, m_i) = \gcd(k_n, m_i)$ for every $i \in \ldb 1,j \rdb$ we can infer that the underlying multisets of the finite sequences $(r_i^{k'_n})_{i=1}^{d_n}$ and $(r_i^{k_n})_{i=1}^{d_n}$ are equal. Consequently, $\sum_{i=1}^{d_n} r_i^{k'_n} = \sum_{i=1}^{d_n} r_i^{k_n} \neq 0$, and so the fact that $k_n' < k_n$ contradicts the minimality of $k_n$. The claim is now established.
	\smallskip
	
	We are now ready to conclude the proof. It follows from the claim that $k_n \mid 2^{M_n - N} u$ for every $n \in \nn$. On the other hand, for each $n \in \nn$, it follows from the equality $k_n = \text{ord}(Q'_n(x) - Q'_n(0))$ that $k_n = 2^{M_n}(q_n - o_n)$, and so $v_3(k_n) = v_3(q_n - o_n)$. Because $\lim_{n \to \infty} q_n = \lim_{n \to \infty} o_n = 0$, the fact that $M_{3/4}$ is atomic with $\mathcal{A}(M_{3/4}) = \big\{ \big( \frac34 \big)^n : n \in \nn_0 \big\}$ allows us to take $n_0 \in \nn$ large enough so that $q_{n_0}$ and $o_{n_0}$ can both be written as a finite sum of irreducibles in the set $\big\{ \big( \frac34 \big)^i : i > v_3(u) \big\}$. As a consequence, $v_3(k_{n_0}) = v_3(q_{n_0} - o_{n_0}) > v_3(u)$. However, this contradicts that $k_{n_0} \mid 2^{M_{n_0} - N} u$.
\end{proof}

We are in a position to prove that the monoid algebra $\qq[M_{3/4}]$ is atomic.

\begin{theorem} \label{thm:main}
	The monoid algebra $\qq[M_{3/4}]$ satisfies the almost ACCP but does not satisfy the ACCP.
\end{theorem}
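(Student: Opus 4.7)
The plan is to handle the two assertions separately. For the failure of the ACCP, I would invoke \cite[Theorem~13]{AJ15}, which, for a reduced, cancellative, torsion-free monoid $M$ and any field $F$, gives the equivalence of the ACCP between $M$ and the monoid algebra $F[M]$. Since $M_{3/4}$ satisfies these hypotheses and fails the ACCP (as recorded in the introduction via \cite[Example~3.8]{GL23a}, because $3/4 < 1$), the algebra $\qq[M_{3/4}]$ likewise fails the ACCP.

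For the almost ACCP, the strategy is to reduce to Proposition~\ref{prop:ACCP-supported PE are ACCP} by extracting a monomial common divisor from the almost ACCP of the exponent monoid. Given a finite nonempty $S \subseteq \qq[M_{3/4}]^*$, set $T := \bigcup_{P \in S} \supp P$, a finite subset of $M_{3/4}$. The almost ACCP of $M_{3/4}$ itself (again \cite[Example~3.8]{GL23a}, since $(3/4)^{-1} \notin \nn$) produces an atomic common divisor $d$ of $T$ in $M_{3/4}$ and an element $t \in T$ for which $t - d$ satisfies the ACCP in $M_{3/4}$. Take $D(x) := x^d$. Since $m - d \in M_{3/4}$ for every $m \in T$, each $P \in S$ factors as $D \cdot \sum_{m \in \supp P} c_m x^{m-d}$, showing that $D$ is a common divisor of $S$ in $\qq[M_{3/4}]$. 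Atomicity of $D$ follows from lifting a factorization $d = a_1 + \cdots + a_\ell$ into atoms of $M_{3/4}$ to $D = x^{a_1} \cdots x^{a_\ell}$, each factor being irreducible since divisors of a monomial in $\qq[M_{3/4}]$ are themselves monomials (a short leading-and-trailing-term argument in the totally ordered monoid algebra). Finally, choosing any $s \in S$ with $t \in \supp s$, the quotient $s/D$ has $x^{t-d}$ as one of its monomial terms, so $s/D$ is ACCP-supported, and Proposition~\ref{prop:ACCP-supported PE are ACCP} ensures that $s/D$ satisfies the ACCP.

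Once this bookkeeping is set up there is essentially no further obstacle; all the difficulty has already been dispatched inside Proposition~\ref{prop:ACCP-supported PE are ACCP} (together with the cyclotomic and splitting-sequence machinery of Section~\ref{sec:divisibility in Z[x]}) and inside the cited almost ACCP of $M_{3/4}$. The only technical point worth verifying explicitly in the writeup is the monomial-divisor observation used to conclude that $D$ is atomic in $\qq[M_{3/4}]$.
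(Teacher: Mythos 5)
Your proposal is correct and follows essentially the same route as the paper: the almost ACCP of $M_{3/4}$ supplies a monomial common divisor $x^d$ and an element whose quotient is ACCP-supported, so Proposition~\ref{prop:ACCP-supported PE are ACCP} finishes the almost ACCP, while the failure of the ACCP comes from \cite[Theorem~13]{AJ15} together with the non-ACCP of $M_{3/4}$. Your explicit checks that $x^d$ is atomic (via monomial divisors of monomials) and of the ACCP failure are points the paper leaves implicit, but they are consistent with its argument.
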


\begin{proof}
	It was proved in \cite[Example~3.2]{GL23a} that $M_{3/4}$ satisfies the almost ACCP. To argue that the monoid algebra $\qq[M_{3/4}]$ also satisfies the almost ACCP, fix nonconstant polynomials $P_1(x), \dots, P_k(x) \in \qq[M_{3/4}]$. Now set $S := \bigcup_{i=1}^k \text{supp} \, P_i(x)$. Because $M_{3/4}$ satisfies the almost ACCP, we can pick a common divisor $d \in M_{3/4}$ of $S$ such that $s-d$ satisfies the ACCP in $M_{3/4}$ for some $s \in S$. Take $j \in \ldb 1,k \rdb$ such that $s \in \text{supp} \, P_j(x)$. It is clear that $x^d$ is a common divisor of $P_1(x), \dots, P_k(x)$ in $\qq[M_{3/4}]$. In addition, $s-d \in \text{supp} \, P_j(x)/x^d$, and so $P_j(x)/x^d$ is an ACCP-supported polynomial expression in $\qq[M_{3/4}]$. Then it follows from Proposition~\ref{prop:ACCP-supported PE are ACCP} that $P_j(x)/x^d$ satisfies the ACCP in $\qq[M_{3/4}]$. Hence we can conclude that $\qq[M_{3/4}]$ satisfies the almost ACCP.
\end{proof}

Thus, the following corollary holds.

\begin{cor}
	The one-dimensional monoid algebra $\qq[M_{3/4}]$ is atomic but does not satisfy the ACCP.
\end{cor}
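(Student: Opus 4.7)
The plan is to obtain this corollary as an essentially immediate consequence of Theorem~\ref{thm:main} together with two general facts already recorded in the Background section. Theorem~\ref{thm:main} directly supplies the failure of the ACCP, so the only remaining tasks are to verify that $\qq[M_{3/4}]$ is atomic and that it is one-dimensional.

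For atomicity, I would invoke the fact (noted right after the definition of the almost ACCP in Section~\ref{sec:background}, and attributed to \cite[Proposition~3.10(2)]{GL23a}) that every monoid satisfying the almost ACCP is automatically atomic. Applying this to the multiplicative monoid of $\qq[M_{3/4}]$ and combining with Theorem~\ref{thm:main} yields that $\qq[M_{3/4}]$ is atomic as an integral domain.

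For the one-dimensionality assertion, I would appeal to the dimension formula $\dim R[M] = \dim R + \rank M$ that holds whenever $R$ is a Noetherian domain and $M$ is a torsion-free monoid, quoted from \cite[Corollary~2]{jO88} in the Background section. Taking $R = \qq$ (so $\dim R = 0$) and $M = M_{3/4}$, which is a rank-one Puiseux monoid (being an additive submonoid of $\qq$ generated by powers of $3/4$), this gives $\dim \qq[M_{3/4}] = 0 + 1 = 1$. Assembling the three ingredients delivers the corollary, with no further obstacle beyond the substantive content of Theorem~\ref{thm:main} itself.
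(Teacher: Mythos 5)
Your proposal is correct and matches the paper's (implicit) argument: the corollary is stated immediately after Theorem~\ref{thm:main} precisely because the almost ACCP implies atomicity (via \cite[Proposition~3.10(2)]{GL23a}, as recalled in Section~\ref{sec:background}) and the dimension formula $\dim \qq[M_{3/4}] = \dim \qq + \rank M_{3/4} = 1$ from \cite[Corollary~2]{jO88} gives one-dimensionality. Nothing further is needed.
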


\bigskip
%%%%%%%%%%%%%%%%%%%%%%%%%%%%%%%%%%
%%%%%%%%%%%%%%%%%%%%%%%%%%%%%%%%%%
\section{More Almost ACCP One-Dimensional Monoid Algebras}
\label{sec:another almost ACCP monoid algebra}

The primary purpose of this section is to construct a rank-one torsion-free monoid~$M$ that is universal in the following way: for every field $F$ the monoid algebra $F[M]$ satisfies the almost ACCP but not the ACCP. We split our work into three parts: first, we construct the monoid $M$, then we introduce some functions on $M$ with some convenient additive properties, and finally we prove the universal property of $M$.

\subsection{The Exponent Monoid} Let $(p_n)_{n \ge 1}$ and $(q_n)_{n \ge 1}$ be two strictly increasing sequences consisting of primes such that $100 < q_1$ and $q_n < p_n < q_{n+1}$ for every $n \in \nn$. Now, set
\[
    \alpha_n := \frac{q_n}{p_n}
\]
for every $n \in \nn$, and assume that the terms of the sequences $(p_n)_{n \ge 1}$ and $(q_n)_{n \ge 1}$ have been chosen large enough so that $\alpha_1 < \frac1{200}$ and also that $\alpha_{n+1} < \frac12 \alpha_n$ for every $n \in \nn$. Therefore $(\alpha_n)_{n \ge 1}$ is a strictly decreasing sequence such that
\begin{equation*}
	\sum_{n \in \nn} \alpha_n < \sum_{n \in \nn} \frac1{2^{n-1}}\alpha_1 = 2 \alpha_1 < \frac{1}{100},
 % \quad \text{ and } \quad \frac{1}{100} > \alpha_n > \alpha_{n+1} \ \text{ for every } \, n \in \nn.
\end{equation*}
Following~\cite[Section~3]{GL23}, we let $N$ be the atomization with respect to $(p_n)_{n \ge 1}$ of the monoid generated by the sequence $(q_n)_{n \ge 1}$ at this generating sequence; that is,
\begin{equation} \label{eq:the Puiseux monoid N}
 	N := \big\langle \alpha_n : n \in \nn \big\rangle.
\end{equation}
The following lemma provides an insight into the atomic structure of $N$ and also provides a canonical sum decomposition of the elements of $N$ that will be useful later.

\begin{lemma} \label{lem:atomicity and canonical decomposition of N}
	Let $N$ be the monoid in~\eqref{eq:the Puiseux monoid N}, and let $N_0$ be the (numerical) submonoid of $N$ generated by the set $\{q_n : n \in \nn\}$. Then the following statements hold.
	\begin{enumerate}
		\item[(1)] $N$ satisfies the ACCP.
		\smallskip
		
		\item[(2)] $\mathcal{A}(N) = \{\alpha_n : n \in \nn\}$.
		\smallskip
		
		\item[(3)] Each element $q \in N$ can be written uniquely as
		\begin{equation} \label{eq:canonical lifting decomposition}
			q = \nu(q) + \sum_{n \in \nn} c_n(q) \alpha_n,
		\end{equation}
		where $\nu(q) \in N_0$ and the map $\nn \to \nn_0$ defined via the assignment $n \mapsto c_n(q)$ has finite support and satisfies that $0 \le c_n(q) < p_n$ for every $n \in \nn$.
		\smallskip
		
		\item[(4)] For each element $q \in N$ the following conditions are equivalent, and each of these conditions implies that $|\mathsf{Z}(q)| = 1$.
		\smallskip
		\begin{enumerate}
			\item[(a)] $\nu(q) = 0$.
			\smallskip
			
			\item[(b)] Every factorization of $q$ has at most $p_n - 1$ copies of the atom $\alpha_n$ for every $n \in \nn$.
			\smallskip
			
			\item[(c)] There is a factorization of $q$ with at most $p_n - 1$ copies of the atom $\alpha_n$ for every $n \in \nn$.
		\end{enumerate}
		\smallskip
		
		\item[(5)] For any $q,r \in N$, the following inequality $\nu(q + r) \ge \nu(q) + \nu(r)$ holds.
	\end{enumerate}
\end{lemma}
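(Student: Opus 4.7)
The plan is to prove part~(3) first, since the canonical decomposition underpins everything else. Existence follows by iterating the identity $p_n \alpha_n = q_n$: starting from any representation $q = \sum a_n \alpha_n$, Euclidean division $a_n = b_n p_n + c_n(q)$ with $0 \le c_n(q) < p_n$ yields $q = \sum b_n q_n + \sum c_n(q) \alpha_n$, and we set $\nu(q) := \sum b_n q_n \in N_0$. Uniqueness is the subtle step. Given two candidate canonical decompositions of $q$, I would pick the largest index $n^*$ where the coefficients $c_{n^*}$ disagree and take the $p_{n^*}$-adic valuation of the difference of the two decompositions. The key observations are that $v_{p_{n^*}}(\alpha_{n^*}) = -1$, while every other $\alpha_n$-term has non-negative $p_{n^*}$-valuation because $p_{n^*}$ is a prime distinct from all $p_k$ with $k \ne n^*$ and from every $q_k$ (thanks to the interlacing $q_n < p_n < q_{n+1}$), and $0 < |c_{n^*}^{(1)} - c_{n^*}^{(2)}| < p_{n^*}$. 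By the ultrametric inequality the right-hand side has valuation exactly $-1$, contradicting that the left-hand difference of $\nu$-parts lies in $\zz$.

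From (3) the remaining parts follow cleanly. For (5), add the canonical decompositions of $q$ and $r$ and perform Euclidean division on each coefficient sum $c_n(q) + c_n(r) = d_n p_n + e_n$; the carries contribute $\sum d_n q_n \ge 0$ to the $N_0$-part, giving $\nu(q+r) = \nu(q) + \nu(r) + \sum d_n q_n \ge \nu(q) + \nu(r)$. For (4), applying the same mod-$p_n$ reduction to an arbitrary factorization $q = \sum a_n \alpha_n$ and invoking uniqueness of the canonical decomposition forces $a_n = c_n(q) < p_n$ whenever $\nu(q) = 0$, yielding (a)$\Rightarrow$(b) together with $|\mathsf{Z}(q)| = 1$; (b)$\Rightarrow$(c) is trivial since $N$ is generated by its atoms, and (c)$\Rightarrow$(a) is uniqueness once more. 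For (2), the inclusion $\mathcal{A}(N) \subseteq \{\alpha_n\}$ is immediate since $N = \langle \alpha_n \rangle$; conversely, if $\alpha_n = a + b$ with nonzero $a,b \in N$, then (5) forces $\nu(a) = \nu(b) = 0$, and comparing with the canonical decomposition of $\alpha_n$ (which has $c_n = 1$ and all other coefficients zero) forces one of $a$ or $b$ to be zero, a contradiction.

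For part~(1), I would track an ascending chain $q_0 + N \subseteq q_1 + N \subseteq \cdots$ via the pair $\big(\nu(q_i), c(q_i)\big) \in \nn_0 \times \nn_0$, where $c(q) := \sum_n c_n(q)$. Writing $q_i = q_{i+1} + r_i$ with $r_i \in N$, the addition analysis above shows that whenever $q_i \ne q_{i+1}$, either $\nu(r_i) > 0$ or some carry occurs, making $\nu(q_i) > \nu(q_{i+1})$; otherwise $\nu(q_i) = \nu(q_{i+1})$ and $c(q_i) = c(q_{i+1}) + c(r_i) > c(q_{i+1})$, since $r_i$ is nonzero with $\nu(r_i) = 0$ (hence at least one $c_n(r_i) > 0$). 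Thus the pair strictly decreases in lexicographic order at every non-stabilizing step, and well-foundedness of the lex order on $\nn_0 \times \nn_0$ forces the chain to stabilize. The main obstacle throughout is the uniqueness claim in part~(3); the delicate point is identifying the prime whose valuation discriminates between two candidate decompositions, which relies essentially on the primality and strict interlacing hypotheses placed on the sequences $(p_n)$ and $(q_n)$.
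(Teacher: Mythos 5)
Your proposal is correct, and on parts (3), (4) and (5) it is essentially the paper's own argument: existence of the canonical decomposition by Euclidean division of coefficients using $p_n\alpha_n = q_n$, uniqueness by taking $p_n$-adic valuations of the difference (the paper does this at every index rather than at a maximal disagreeing index, but the point is identical), the carry analysis $c_n(q)+c_n(r) = d_np_n + r_n$ for (5), and comparison with the canonical decomposition for the equivalences and the uniqueness of factorization in (4). Where you genuinely diverge is in parts (1) and (2). For (1) the paper does not argue directly: it observes that $N$ is a submonoid of the Puiseux monoid $\big\langle \frac{1}{p_n} : n \in \nn \big\rangle$, cites an external result that this monoid satisfies the ACCP, and uses the fact that submonoids of reduced ACCP monoids inherit the ACCP. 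Your argument instead tracks the pair $\big(\nu(q_i), \sum_n c_n(q_i)\big)$ along a divisor chain and shows it strictly decreases lexicographically at every proper step (a carry or a positive $\nu$-part drops the first coordinate; otherwise the coefficients add without carry and the second coordinate drops), so well-foundedness of the lexicographic order forces stabilization; this is correct and has the merit of being self-contained, at the cost of having to establish (3) and the refined form of (5) first, whereas the paper's citation-based route is independent of the decomposition. For (2) the paper dispatches the reverse inclusion with a one-line remark about the pairwise coprime denominators $p_m, p_n$, while you deduce it from the canonical decomposition of $\alpha_n$ together with the no-carry refinement of (5); this works (note that ruling out carries, not just $\nu(a)=\nu(b)=0$, is what lets you add coefficients componentwise, and that the inclusion $\mathcal{A}(N) \subseteq \{\alpha_n : n \in \nn\}$ tacitly uses that $N$ is reduced), and it is a pleasant uniform use of your machinery, though heavier than the paper's direct valuation remark.
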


\begin{proof}
	(1) First, observe that every submonoid of a reduced monoid satisfying the ACCP must satisfy the ACCP. Since $N$ is a submonoid of the Puiseux monoid $\big\langle \frac1{p_n} : n \in \nn \big\rangle$ and the latter satisfies the ACCP by \cite[Proposition~4.2.2]{fG22}, it follows that $N$ satisfies the ACCP.
	\smallskip
	
	(2) Since $N$ is reduced, it follows that $\mathcal{A}(N) \subseteq \{\alpha_n : n \in \nn\}$. The reverse inclusion follows immediately from the fact that $\gcd(\mathsf{d}(\alpha_m), \mathsf{d}(\alpha_n)) = \gcd(p_m, p_n) = 1$ for all distinct $m,n \in \nn$.
	\smallskip
	
	(3) Fix $q \in N$, and let us check that $q$ can be written as in~\eqref{eq:canonical lifting decomposition}. If $q=0$, it is clear that it has a trivial and unique sum decomposition as in~\eqref{eq:canonical lifting decomposition}. Assume, therefore, that $q > 0$. Since $N$ is atomic, we can write $q = \sum_{n \in \nn} c'_n \alpha_n$ for coefficients $c'_1, c'_2, \dots$ in $\nn_0$ such that $c'_n = 0$ for almost all $n \in \nn$. For each $n \in \nn$, take $b_n, c_n \in \nn_0$ with $c_n < p_n$ such that $c'_n = b_n p_n + c_n$. Then after setting $\nu(q) := \sum_{k \in \nn} b_k p_k \alpha_k = \sum_{k \in \nn} b_k q_k \in N_0$ (note that $b_k = 0$ for almost all $k \in \nn$) and $c_n(q) := c_n$ for every $n \in \nn$, we obtain a sum decomposition of $q$ as specified in~\eqref{eq:canonical lifting decomposition}. For the uniqueness of the sum decomposition, notice that if $\nu'(q) + \sum_{n \in \nn} c'_n(q) \alpha_n$ is also a sum decomposition of $q$ as specified in~\eqref{eq:canonical lifting decomposition}, then for each $n \in \nn$, after applying $p_n$-adic valuation to both sides of $\nu(q) - \nu'(q) + \sum_{n \in \nn} (c_n(q) - c'_n(q)) \alpha_n = 0$, the inequality $|c_n(q) - c'_n(q)| < p_n$ ensures that $c'_n(q) = c_n(q)$: this clearly implies that both sum decompositions are the same.
	\smallskip

    (4) We proceed to argue that the three conditions stated in part~(4) are equivalent.
	\smallskip
 
	(a) $\Rightarrow$ (b): Observe that if $q$ has a factorization having at least $p_n$ copies of the atom $\alpha_n$ for some $n \in \nn$, then we can proceed as we did in the proof of the existence of the sum decomposition~\eqref{eq:canonical lifting decomposition} to obtain that the same sum decomposition of $q$ satisfies $\nu(q) \ge q_n > 0$.
	\smallskip
	
	(b) $\Rightarrow$ (c): This follows trivially from the fact that $N$ is atomic.
	\smallskip
	
	(c) $\Rightarrow$ (a): Clearly, if $\sum_{n \in \nn} c_n \alpha_n$ is a factorization of $q$ in $N$ with $c_n \le p_n - 1$ for every $n \in \nn$, then it can be interpreted as the unique sum decomposition of $q$ in~\eqref{eq:canonical lifting decomposition} and, therefore, the equality $\nu(q) = 0$ must hold.
	\smallskip
	
	Now let us show that condition~(c) implies that $|\mathsf{Z}(q)| = 1$. To do so, take $\sum_{i=1}^n c_i \alpha_i \in \mathsf{Z}(q)$ with $c_i \le p_i - 1$ for every $i \in \nn$. Then $\sum_{i=1}^n c_i \alpha_i$ must be the sum decomposition of $q$ in~\eqref{eq:canonical lifting decomposition}, which satisfies $\nu(q) = 0$.  On the other hand, observe that if $q$ has a factorization having at least $p_n$ copies of the atom $\alpha_n$ for some $n \in \nn$, then we can proceed as we did in the proof of the existence of the sum decomposition~\eqref{eq:canonical lifting decomposition} to obtain that the same sum decomposition of $q$ satisfies $\nu(q) \ge q_n > 0$. Thus, by uniqueness of the sum decomposition in~\eqref{eq:canonical lifting decomposition}, every other factorization of $q$ must contain at most $p_n - 1$ copies of the atom $\alpha_n$, and so it must be the same as $\sum_{i=1}^n c_i \alpha_i$. 
	\smallskip
	
	(5) Fix $q,r \in N$. Let the canonical sum decomposition of $q$ be as in~\eqref{eq:canonical lifting decomposition}, and let the right-hand side of the equality
	\[
		r = \nu(r) + \sum_{n \in \nn} c_n(r) \alpha_n
	\]
	be the canonical sum decomposition~\eqref{eq:canonical lifting decomposition} of $r$. For each $n \in \nn$, write $c_n(q) + c_n(r) = d_n p_n + r_n$ for some $d_n, r_n \in \nn_0$ with $r_n < p_n $. Now observe that from
	\[
		q+r = \nu(q) + \nu(r) + \sum_{n \in \nn} (c_n(q) + c_n(r)) \alpha_n
	\]
	we can obtain the canonical sum decomposition of $q+r$ if, for each $n \in \nn$, we replace the coefficient $c_n(q) + c_n(r)$ by $r_n$ and add $\sum_{n \in \nn} d_n p_n \alpha_n = \sum_{n \in \nn} d_n q_n \in N_0$ to the isolated coefficient $\nu(q) + \nu(r)$. As a consequence, $\nu(q+r) = \nu(q) + \nu(r) + \sum_{n \in \nn} d_n p_n \alpha_n \ge \nu(q) + \nu(r)$, as desired.
	%
	%In the language of~\cite[Subsection~3.1]{GR23}, the monoid $N$ in~\eqref{eq:sequence of alpha_n} is the lifting monoid of $\langle q_n : n \in \nn \rangle$ with respect to the lifting map defined by the assignments $q_n \mapsto (p_n, \nn_0)$, and so it follows from \cite[Proposition~3.2]{GR23} that each element $\alpha \in N$ can be uniquely written as in the statement of the lemma (it is not difficult to argue the existence and uniqueness of the canonical sum decomposition directly).
\end{proof}
\smallskip

We call the right-hand side of the equality~\eqref{eq:canonical lifting decomposition} the \emph{canonical decomposition} of $q$ in $N$. Thus, with notation as in Lemma~\ref{lem:atomicity and canonical decomposition of N}, we can define the functions $\nu, \sigma \colon N \to \nn_0$ via the assignments
\[
	\nu \colon q \mapsto \nu(q) \quad \text{and} \quad \sigma \colon q \mapsto \sum_{n \in \nn} c_n(q).
\]
%
%\begin{lemma} \label{lem:additivity of nu}
%	For any $q,r \in N$, the following inequality $\nu(q + r) \ge \nu(q) + \nu(r)$ holds.
%\end{lemma}
%
%\begin{proof}
%	Fix $q,r \in N$. Let the canonical decomposition of $q$ be as in~\eqref{eq:canonical lifting decomposition}, and let the right-hand side of the equality
%	\[
%		r = \nu(r) + \sum_{n \in \nn} c_n(r) \alpha_n
%	\]
%	be the canonical decomposition of $r$ in $N$. For each $n \in \nn$, write $c_n(q) + c_n(r) = d_n p_n + r_n$ for some $d_n, r_n \in \nn_0$ with $r_n < p_n $. Now observe that from
%	\[
%		q+r = \nu(q) + \nu(r) + \sum_{n \in \nn} (c_n(q) + c_n(r)) \alpha_n
%	\]
%	we can obtain the canonical decomposition of $q+r$ in $N$ if, for each $n \in \nn$, we replace the coefficient $c_n(q) + c_n(r)$ by $r_n$ and add $\sum_{n \in \nn} d_n p_n \alpha_n \in \nn_0$ to the isolated coefficient $\nu(q) + \nu(r)$. As a consequence, $\nu(q+r) = \nu(q) + \nu(r) + \sum_{n \in \nn} d_n p_n \alpha_n \ge \nu(q) + \nu(r)$, as desired.
%\end{proof}

We proceed to introduce some subsets of rationals, defined in terms of $\mathcal{A}(N)$, which are also needed in the main construction of this section. These are the following sets:

\begin{equation*}
	A_1 :=  \bigg\{ \sum_{i=1}^\ell \alpha_{j_i} : \ell \in \nn \text{ and } j_1, j_2, \dots,j_\ell \in \nn \text{ with } j_1 < \dots < j_\ell \bigg\},
\end{equation*}
\begin{equation*}
	A_2 := \bigg\{ \alpha_{j_k} \, + \, \sum_{i=1}^\ell \alpha_{j_i} : \ell \in \nn, \ k \in \ldb 1, \ell \rdb, \text{ and distinct }j_1, j_2, \dots,j_\ell \in \nn \bigg\},
\end{equation*}
\begin{equation*}
	A_3 := \bigg\{ 2\alpha_{j_k} \! + \, \sum_{i=1}^\ell \alpha_{j_i} : \ell \in \nn, \ k \in \ldb 1, \ell \rdb, \text{ and distinct }j_1,j_2,\ldots,j_\ell\in \nn \bigg\},
\end{equation*}
\begin{equation*}
	B=\bigg\{ \beta_0 := 1,\beta_\ell := 1 - \sum_{i = 1}^\ell \alpha_i : \ell \in \nn \bigg\}.
\end{equation*}
Since $\sum_{n \in \nn} \alpha_n < \frac1{100}$, it is clear that $A_1 \cup A_2 \cup A_3 \subset \big(0,\frac12\big)$ and $B \subset \big(\frac12, 1\big]$. We encapsulate some further observations about the sets $A_1, A_2$, and $A_3$ in the following lemma.

\begin{lemma} \label{lem:sets A_1, A_2, A_3}
	The following statements hold.
	\begin{enumerate}
		\item $A_1, A_2$, and $A_3$ are mutually disjoint sets.
		\smallskip

        \item $B$ and $\langle A_1 \sqcup A_2 \sqcup A_3 \rangle$ are disjoint sets.
        \smallskip
		
		\item $N = A_1 \sqcup \langle A_2 \sqcup A_3 \rangle$.
	\end{enumerate}
\end{lemma}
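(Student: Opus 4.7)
The plan is to handle the three parts in order, relying throughout on the canonical decomposition from Lemma~\ref{lem:atomicity and canonical decomposition of N}(3) together with the uniqueness-of-factorization consequence in part~(4) of that lemma.

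For part~(1), I would observe that every element of $A_j$ comes with an obvious expression of the form $\sum c_n \alpha_n$ in which every coefficient is at most $3$, hence strictly less than $p_n$; thus that expression is already the canonical decomposition of the element. The three sets are then distinguished by their coefficient profiles: for $A_1$, all coefficients lie in $\{0,1\}$ (not all zero); for $A_2$, exactly one coefficient equals $2$ and the rest lie in $\{0,1\}$; for $A_3$, exactly one coefficient equals $3$ and the rest lie in $\{0,1\}$. Uniqueness of the canonical decomposition then makes $A_1$, $A_2$, and $A_3$ pairwise disjoint.

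For part~(2), the containment $\langle A_1 \sqcup A_2 \sqcup A_3 \rangle \subseteq N$ reduces the claim to $B \cap N = \emptyset$, and for this it suffices to show $1 \notin N$ (since any $\beta_\ell \in N$ with $\ell \ge 1$ would yield $1 = \beta_\ell + \sum_{i=1}^\ell \alpha_i \in N$). To rule out $1 \in N$, I would suppose $1 = \sum_{i \in I} c_i \alpha_i$ with the $c_i \in \nn_0$, clear denominators, and use the primality conditions $p_i \nmid q_i$ and $p_i \nmid p_j$ for $j \ne i$ to force $p_i \mid c_i$ for every $i \in I$; the inequality $q_i > 100$ then makes any nonzero summand exceed $1$, a contradiction.

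For part~(3), disjointness is immediate: if $q \in A_1$ then $\nu(q) = 0$, so by Lemma~\ref{lem:atomicity and canonical decomposition of N}(4) the factorization of $q$ is unique and uses every atom at most once; on the other hand, a representation $q = \sum_{i} a_i$ with $a_i \in A_2 \cup A_3$ (and at least one summand, since $q \ne 0$) concatenates to a factorization of $q$ in which some atom has multiplicity at least $2$, a contradiction. For the equality, I would take nonzero $q \in N$, expand its canonical decomposition into atoms (so that any nonzero $\nu(q)$ contributes coefficients at least $p_n > 100$), and examine the resulting factorization $q = \sum c_n \alpha_n$: if every $c_n \in \{0,1\}$, then $q$ is a sum of distinct atoms and $q \in A_1$. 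Otherwise, pick an index $n_0$ with $c_{n_0} \ge 2$ and choose $c'_{n_0} \in \{2,3\}$ so that $c_{n_0} - c'_{n_0}$ is either $0$ or at least $2$ (a short parity case analysis). Peel off the $A_2$-or-$A_3$ element $c'_{n_0}\alpha_{n_0} + \sum_{n:\,c_n=1}\alpha_n$; in what remains, every nonzero coefficient is $\ge 2$, so each monomial term $m\alpha_n$ can be split as $a(2\alpha_n) + b(3\alpha_n)$ via some $2a + 3b = m$, which exists because $m \ge 2$. The main delicacy is precisely this parity-based choice of $c'_{n_0} \in \{2,3\}$ that keeps the leftover coefficient at $n_0$ away from the value $1$, and it is the reason both $A_2$ and $A_3$ (rather than just $A_2$) are needed in the construction.
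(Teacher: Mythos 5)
Your proposal is correct and follows essentially the same route as the paper: part~(1) via uniqueness of the canonical decomposition, part~(2) by reducing to $1 \notin N$ through a $p_n$-adic valuation argument, and part~(3) using the canonical decomposition together with the fact that every integer $m \ge 2$ lies in $\langle 2,3 \rangle$. The only variations are cosmetic: for the containment in part~(3) you peel off a single $A_2$-or-$A_3$ element and split the rest directly, where the paper runs an induction on $\sum_n c_n(q)$ removing $2\alpha_k$ at each step, and for disjointness you invoke uniqueness of factorizations (Lemma~\ref{lem:atomicity and canonical decomposition of N}(4)) rather than comparing canonical decompositions, both of which rest on the same lemma.
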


\begin{proof}
	(1) Observe that the inequality $\min\{p_n : n \in \nn\} = p_1 > 3$ guarantees that, for each $i \in \ldb 1,3 \rdb$ and any element $q \in A_i$, the equality $\nu(q) = 0$ holds, and there exists $j \in \nn$ such that $c_j(q) = i$ while $c_n(q) \in \{0,1\}$ for any $n \neq j$. Thus, the uniqueness of the canonical decomposition of $q$ in $N$ ensures that $A_1$, $A_2$, and $A_3$ are mutually disjoint.
	\smallskip

    (2) Suppose, by way of contradiction, that $\beta_n \in \langle A_1 \sqcup A_2 \sqcup A_3 \rangle$ for some $n \in \nn_0$. In this case, $1 \in N$. Also, note that for each $n \in \nn$ the fact that the $p_n$-adic valuation of $1$ is nonnegative implies that $c_n(1) = 0$. Therefore $1 = \nu(1)$, and so $1 = \nu(1) \in \langle q_n : n \in \nn \rangle$. However, the smallest positive integer of the numerical monoid $\langle q_n : n \in \nn \rangle$ is $q_1$, and so the inequality $q_1 > 1$ generates a contradiction.
	\smallskip
 
	(3) To see that $A_1$ and $\langle A_2 \sqcup A_3 \rangle$ are disjoint we use the uniqueness of the canonical decomposition~\eqref{eq:canonical lifting decomposition}: first observe that for each $q \in A_1$, the equality $\nu(q) = 0$ holds and $c_n(q) \in \{0,1\}$ for every $n \in \nn$, while for each $r \in \langle A_2 \sqcup A_3 \rangle$, either $\nu(r) > 0$ or $c_n(r) \ge 2$ for some $n \in \nn$. 
 
    Let us argue now that $N \subseteq A_1 \sqcup \langle A_2 \sqcup A_3 \rangle$. The fact that $p_n \ge 2$ for every $n \in \nn$ implies that $q_j = p_j \alpha_j \in \langle 2\alpha_j, 3 \alpha_j \rangle \subset \langle A_2 \sqcup A_3 \rangle$ for every $j \in \nn$, and so $N_0 \subseteq \langle A_2 \sqcup A_3 \rangle$. Hence we only need to verify that each $q \in N$ with $\nu(q) = 0$ belongs to $A_1 \sqcup \langle A_2 \sqcup A_3 \rangle$. This follows immediately by induction on $\sum_{n \in \nn} c_n(q)$ as if $c_n(q) \in \{0,1\}$ for every $n \in \nn$ then $q \in A_1$ and, otherwise, there exists an index $k \in \nn$ such that $q = (q - 2 \alpha_k) + 2 \alpha_k \in A_1 \sqcup \langle A_2 \sqcup A_3 \rangle + A_2 \subseteq A_1 \sqcup \langle A_2 \sqcup A_3 \rangle$ by the inductive hypothesis.
 % This also follows from the uniqueness of the canonical decomposition as in the canonical decomposition of a nonzero element $q \in \langle A_2 \sqcup A_3 \rangle$, either $\nu(q) > 0$ or $c_n(q) \ge 2$ for some $n \in \nn$, while in the canonical decomposition of an element $q' \in A_1$, the equality $\nu(q') = 0$ holds and $c_n(q') \in \{0,1\}$ for every $n \in \nn$.
\end{proof}

The Puiseux monoid that will play a crucial role in the forthcoming construction of one-dimensional monoid algebras that satisfy the almost ACCP is that generated by the set $A_2 \cup A_3 \cup B$. Therefore we set
\begin{equation} \label{eq:main monoid}
	A := A_2 \sqcup A_3 \quad \text{ and } \quad M := \langle A \cup B \rangle.
\end{equation}
As the following proposition indicates, the monoid $M$ is atomic but does not satisfy the ACCP.

\begin{prop}\label{prop:M atomic} %\label{lem:M not ACCP}
	For the monoid $M$ in~\eqref{eq:main monoid}, the following statements hold.
	\begin{enumerate}
		\item $M$ is atomic with $\mathcal{A}(M) = A \cup B$.
		\smallskip
		
		\item $M$ does not satisfy the ACCP.
	\end{enumerate}
\end{prop}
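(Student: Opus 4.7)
My plan is to prove parts~(1) and~(2) separately, leveraging the canonical-decomposition machinery from Lemma~\ref{lem:atomicity and canonical decomposition of N} and the disjointness relations from Lemma~\ref{lem:sets A_1, A_2, A_3}. The unifying idea in part~(1) is the size gap between elements of $A \subset (0, 1/2)$ and elements of $B$: since $\sum_n \alpha_n < 1/100$, every $\beta \in B$ satisfies $\beta > 99/100$. Consequently, any element $m \in M$ with $m \le 1$ admits at most one $B$-summand in any representation over $A \cup B$, because two $B$-summands would already exceed $1$.

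For part~(1), the inclusion $\mathcal{A}(M) \subseteq A \cup B$ is immediate since $M = \langle A \cup B \rangle$ and $0 \notin A \cup B$. For the reverse inclusion in the case $a \in A$, the inequality $a < 1/2$ rules out $B$-summands entirely in any decomposition $a = x + y$ with $x, y \in M$ nonzero, so $x, y \in \langle A \rangle \subseteq N$; hence it suffices to show $a$ is an atom in $\langle A \rangle$. Assume $a = \tau_1 + \cdots + \tau_m$ with $\tau_i \in A$ and $m \ge 2$. Since $\nu(a) = 0$ and $\nu$ is superadditive (Lemma~\ref{lem:atomicity and canonical decomposition of N}(5)), each $\nu(\tau_i) = 0$ and no carries occur in the sum, so the canonical-decomposition coefficients add exactly: $c_n(a) = \sum_i c_n(\tau_i)$ for every $n$. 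I will then examine the distinguished index $j_k$ of $a$ (where $c_{j_k}(a) \in \{2,3\}$) and argue that either (i) some $\tau_i$ has its own distinguished index at $j_k$ and fully absorbs that coefficient, in which case $a - \tau_i \in A_1 \cap \langle A \rangle = \emptyset$ forces $a = \tau_i$ and $m = 1$; or (ii) the coefficient at $j_k$ is built from $\tau_i$'s whose distinguished indices lie elsewhere, which forces some $c_j(a) \ge 2$ at an index $j \ne j_k$, contradicting the shape of $a$.

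For $\beta_\ell \in B$, any decomposition $\beta_\ell = x + y$ with $x, y$ nonzero allows at most one $B$-summand total. If none appears, then $\beta_\ell \in \langle A \rangle$, contradicting Lemma~\ref{lem:sets A_1, A_2, A_3}(2). Otherwise, write $x = \beta_m + x'$ with $x' \in \langle A \rangle$ and $y \in \langle A \rangle$; the bound $\beta_m \le \beta_\ell$ forces $m \ge \ell$ when $\ell \ge 1$ (and $m = 0$ when $\ell = 0$). The case $m = \ell$ gives $x' = y = 0$, contradicting $y \ne 0$; the case $m > \ell$ gives $x' + y = \sum_{i=\ell+1}^m \alpha_i \in A_1 \cap \langle A \rangle$, again contradicting Lemma~\ref{lem:sets A_1, A_2, A_3}(3). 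This completes $\mathcal{A}(M) = A \cup B$, and atomicity of $M$ follows because $M$ is generated by its atoms.

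For part~(2), I will exhibit the explicit chain $b_n := 2\beta_n \in M$: the identity
\[
    b_n - b_{n+1} = 2\alpha_{n+1} \in A_2 \subset M
\]
shows $b_n \in b_{n+1} + M$, so $(b_n + M)_{n \ge 1}$ is ascending, and the fact that $M$ is reduced prevents stabilization. The main obstacle will be the combinatorial case analysis in the $A$-atom step, where I need to track carefully which $\tau_i$'s contribute to the distinguished position $j_k$ and rule out every configuration compatible with the coefficient constraints imposed by Lemma~\ref{lem:atomicity and canonical decomposition of N}.
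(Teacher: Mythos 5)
Your proposal is correct and follows essentially the same route as the paper: atoms of $A$ are detected through the carry-free additivity of the canonical coefficients (the paper phrases this as the trichotomy $\nu(b)\ge 1$, some $c_i(b)\ge 4$, or two distinct coefficients $\ge 2$, which is exactly what closes your case analysis at the distinguished index), atoms of $B$ via the size bound together with the disjointness statements of Lemma~\ref{lem:sets A_1, A_2, A_3}, and part~(2) via the identical non-stabilizing chain coming from $2\beta_n = 2\beta_{n+1} + 2\alpha_{n+1}$. The only point to tidy is that your dichotomy (i)/(ii) is not literally exhaustive --- you must also treat a $\tau_i$ whose distinguished index is $j_k$ but which does not absorb the full coefficient there --- yet the same coefficient constraints (either a second index with coefficient $\ge 2$ or a coefficient $\ge 4$ at $j_k$) dispose of that subcase immediately.
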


\begin{proof}
	(1) It suffices to show that $A \cup B \subset \mathcal{A}(M)$. Now if $b := a_1 + \dots + a_\ell$ for some $a_1, \dots, a_\ell \in A$ and $\ell \ge 2$, then one of the following conditions must hold:
	\begin{itemize}
		\item $\nu(b) \ge 1$;
		\smallskip
		
		\item $c_i(b) \ge 4$ for some $i \in \nn$;
		\smallskip
		
		\item $c_i(b) \ge 2$ and $c_j(b) \ge 2$ for some $i,j \in \nn$ with $i \neq j$.
	\end{itemize}
	On the other hand, observe that none of the elements of $A$ satisfies any of these three conditions, whence $\alpha \notin \langle A \setminus \{\alpha\} \rangle$ for any $\alpha \in A$. In addition, none of the elements of $B$ can divide any element of $A$ in~$M$ because $\sup A < \frac12 < \inf B$. Thus, we can conclude that $A \subseteq \mathcal{A}(M)$. 
	
	Considering the canonical decomposition of each $b' \in \langle A \rangle$, we see that either $\nu(b') \ge 1$ or $c_n(b') \ge 2$ for some $n \in \nn$. Thus, $\sum_{i=n}^{n+d} \alpha_i \notin \langle A \rangle$ for any $n \in \nn$ and $d \in \nn_0$. Now fix $k \in \nn_0$, and let us argue that $\beta_k \in \mathcal{A}(M)$. It follows from part~(2) of Lemma~\ref{lem:sets A_1, A_2, A_3} that $\beta_n \notin \langle A \rangle$ for any $n \in \nn_0$. Now since $(\beta_n)_{n \ge 0}$ is a strictly decreasing sequence whose underlying set, $B$, is contained in $(\frac 12, 1]$, if we write $\beta_k$ as a sum of elements of $A \cup B$, then at most one term $\beta_\ell$ (and exactly one because $\beta_k \notin \langle A \rangle$) can appear as a summand, in which case, $\ell \ge k$ and we see that $\sum_{i=k+1}^\ell \alpha_i \in \langle A \rangle$, which can only happen if $\ell = k$. Therefore $\beta_k \in \mathcal{A}(M)$, as desired. Hence the inclusion $B \subseteq \mathcal{A}(M)$ also holds, and so $M$ is atomic with $\mathcal{A}(M) = A \cup B$.
	\smallskip
	
	(2) This part follows immediately: indeed, the equality $2 \beta_n = 2 \beta_{n+1} + 2 \alpha_{n+1}$ holds for every $n \in \nn_0$, and this implies that the ascending chain of principal ideals $(2 \beta_n + M)_{n \ge 0}$ of $M$ does not stabilize.
\end{proof}

%For the next lemma, recall that $\{\alpha_n : n \in \nn\}$ is the set of atoms of the atomic submonoid $N$ of $\qq_{\ge 0}$ introduced in~\eqref{eq:the Puiseux monoid N}, and also that $\alpha_n = \frac{q_n}{p_n}$ for every $n \in \nn$.

We conclude this section proving the following lemma.

\begin{lemma} \label{lem:generating set of A}
	%Let the sequence $(\alpha_n)_{n \ge 1}$ be as in~\eqref{eq:sequence of alpha_n}. 
	For any $b_1, \dots, b_\ell \in \nn_0$, if $\max \{b_1, \dots, b_\ell\} \ge 2$, then $\sum_{i=1}^\ell b_i \alpha_i \in \langle A \rangle$.
\end{lemma}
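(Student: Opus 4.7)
The plan is to reduce the problem to Lemma~\ref{lem:atomicity and canonical decomposition of N} via the decomposition $N = A_1 \sqcup \langle A \rangle$ supplied by part~(3) of that lemma. Setting $s := \sum_{i=1}^\ell b_i \alpha_i$, we have $s \in N$ because each $\alpha_i \in \mathcal{A}(N)$, and so it suffices to show that $s \notin A_1$; the desired conclusion $s \in \langle A \rangle$ will then follow from the disjoint decomposition.

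To rule out $s \in A_1$, I will argue by contradiction. Assume $s \in A_1$, so that $s$ admits a representation $s = \sum_{j \in J} \alpha_j$ for some finite $J \subset \nn$. This is a factorization of $s$ in $N$ with every coefficient lying in $\{0,1\}$. Since $p_n \ge p_1 > 100$, each coefficient is in particular bounded by $p_n - 1$, so the factorization realizes condition~(c) of Lemma~\ref{lem:atomicity and canonical decomposition of N}(4); the lemma then yields $|\mathsf{Z}(s)| = 1$. On the other hand, the expression $s = \sum_{i=1}^\ell b_i \alpha_i$ is itself a factorization of $s$, and since $\max\{b_1, \dots, b_\ell\} \ge 2$, it necessarily differs from the $A_1$-factorization (whose coefficients are all at most $1$). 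This contradicts the uniqueness of the factorization of $s$. Hence $s \notin A_1$, and so $s \in \langle A \rangle$.

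Because the entire argument is a direct consequence of properties already established in Lemma~\ref{lem:atomicity and canonical decomposition of N}, no substantive obstacle is anticipated; the only conceptual point is recognizing that the hypothesis $\max b_i \ge 2$ supplies a second factorization incompatible with the unique-factorization behavior enjoyed by elements of $A_1$.
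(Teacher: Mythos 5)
Your proof is correct, but it takes a genuinely different route from the paper's. The paper argues from scratch: it introduces the weight $\omega$ on the set $U_N$ of uniquely-factorizable elements of $N$, shows by induction on $\omega$ that every element of $U_N$ with a repeated atom lies in $\langle A \rangle$, and then handles the case $\nu(q) > 0$ separately via the identity $q_j + (q - \nu(q)) = (p_j - 2)\alpha_j + \big(2\alpha_j + (q - \nu(q))\big)$ together with $p_j - 2 \in \langle 2,3 \rangle$ and $N_0 \subseteq \langle A \rangle$. You instead deduce the statement in a few lines from two earlier results: the disjoint decomposition $N = A_1 \sqcup \langle A_2 \sqcup A_3 \rangle$, which is part~(3) of Lemma~\ref{lem:sets A_1, A_2, A_3} (not of Lemma~\ref{lem:atomicity and canonical decomposition of N}, as your opening sentence suggests --- fix that citation), and the implication (c) $\Rightarrow |\mathsf{Z}(q)| = 1$ in part~(4) of Lemma~\ref{lem:atomicity and canonical decomposition of N}: if $s := \sum_{i=1}^\ell b_i \alpha_i$ lay in $A_1$, it would have a factorization in which every atom appears at most once, hence fewer than $p_n$ times, forcing $|\mathsf{Z}(s)| = 1$; but $\sum_{i=1}^\ell b_i\alpha_i$ is a second factorization, distinct since some $b_i \ge 2$, a contradiction, so $s \in \langle A_2 \sqcup A_3 \rangle = \langle A \rangle$. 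This is valid and considerably shorter, and it is not formally circular, since Lemma~\ref{lem:sets A_1, A_2, A_3} precedes the present lemma and its proof does not invoke it. What the paper's longer route buys is self-containedness: the delicate inclusion ``$N_0$ plus a sum of distinct atoms lies in $\langle A \rangle$'' is exactly the point where the written proof of part~(3) of Lemma~\ref{lem:sets A_1, A_2, A_3} is tersest, and the paper's proof of the present lemma supplies that decomposition explicitly, whereas your argument pushes the entire burden back onto the earlier lemma as a black box.
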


%\begin{lemma} \label{lem:generating set of A}
%	With notation as in \eqref{eq:canonical lifting decomposition}, the following equality holds:
%	\[
%		N \setminus \langle A \rangle = \big\{q \in N : \nu(q) = 0 \ \text{and} \ c_n(q) \in \{0,1\} \text{ for every } n \in \nn \big\}.
%	\]
%\end{lemma}
%
%\begin{proof}
%	Set $T : \big\{q \in N : \nu(q) > 0 \ \text{or} \ c_n(q) \in \{0,1\} \text{ for every } n \in \nn \big\}$. By definition, $T \subseteq N$. Now fix $q \in T$, and let 
%	\[
%		q = \sum_{n \in \nn} c_n(q) \alpha_n
%	\]
%	be the canonical decomposition of $q$ in $N$: as $q \in T$, it follows that $c_n(q) \in \{0,1\}$ for every $n \in \nn$. TODO: finish...
%\end{proof}

\begin{proof}
	Let $U_N$ be the subset of $N$ consisting of all the elements having a unique factorization; that is, $U_N := \{q \in N : |\mathsf{Z}(q)| = 1\}$. Now consider the map $\omega \colon U_N \to \nn_0$ defined by
	\[
		\omega\bigg( \sum_{n \in \nn} c_n \alpha_n \bigg) = \sum_{n \in \nn} \max\{c_n - 1, 0\}
	\]
	for every $ \sum_{n \in \nn} c_n \alpha_n \in U_N$. Since $U_N$ consists of elements having exactly one factorization, the map~$\omega$ is well defined.
%	
%	First observe that for any $c_1, \dots, c_\ell \in \nn_0$, if $c_j \ge p_1$ then $c_j \ge 3$,$\max \{c_1, \dots, c_\ell\} \ge p_1$, then... Suppose that Set $S := \big\{ \sum_{i=1}^\ell c_i \alpha_i : c_1, \dots, c_\ell \in \nn_0 \text{ and } \max \{c_1, \dots, c_\ell\} \ge 2 \big\}$, and define the map $\omega \colon S \to \nn_0$ as follows: for each $a = \sum_{i=1}^\ell c_i \alpha_i  \in S$, where $c_1, \dots, c_\ell \in \nn_0$ are such that $\max \{c_1, \dots, c_\ell\} \ge 2$, set
%	\[
%		\omega(a) := \sum_{i=1}^n \max\{c_i - 1, 0\}.
%	\]
	We now prove by induction on $\omega(q)$ that the set
	\[
		S := \bigg\{ \sum_{n \in \nn} c_n \alpha_n \in U_N : \max\{c_n : n \in \nn \} \ge 2 \bigg\}
	\]
	is a subset of $\langle A \rangle$. If $\omega(q) = 1$ for some $q \in S$, then it is clear that $q \in A_2 \subseteq \langle A \rangle$. Also, if $\omega(q) = 2$ for some $q \in S$, then either $q \in A_3 \subseteq \langle A \rangle$ or $q \in A_2 + A_2 \subseteq \langle A \rangle$. Now suppose there exists $k \in \nn$ with $k \ge 3$ such that $r \in \langle A \rangle$ for all $r \in S$ with $\omega(r) < k$, and take $q := \sum_{n \in \nn} c_n \alpha_n \in S$ with $\omega(q) = k$. Fix an index $i \in \nn$ such that $c_i \ge 2$. From a combination of the inequalities $c_i \ge 2$ and $\omega(q) \ge 3$ we infer that $q - 2 \alpha_i \in S$. In addition, $\omega(q - 2 \alpha_i) < \omega(q) = k$, and so our induction hypothesis ensures that $q - 2 \alpha_i \in \langle A \rangle$. Thus, $q = 2 \alpha_i + (q - 2 \alpha_i) \in A_2 + \langle A \rangle \subseteq \langle A \rangle$. Therefore $S \subseteq \langle A \rangle$.
	
	% Now, set $N_0 := \langle q_n : n \in \nn \rangle$. Let us verify that $N_0 \subseteq \langle A \rangle$. First, notice that $2\alpha_n, 3 \alpha_n \in \langle A \rangle$ for every $n \in \nn$. Thus, for each $n \in \nn$, the equality $q_n := p_n \alpha_n$, along with the fact that $p_n \in \langle 2,3 \rangle$, ensures that $q_n = p_n \alpha_n \in \langle 2\alpha_n, 3 \alpha_n \rangle \subseteq \langle A \rangle$. Since $\{q_n : n \in \nn\}$ is a generating set of $N_0$, the inclusion $N_0 \subseteq \langle A \rangle$ must hold.
    
	Finally, pick $b_1, \dots, b_\ell \in \nn_0$ such that $\max \{b_1, \dots, b_\ell\} \ge 2$, and let us argue that $q := \sum_{i=1}^\ell b_i \alpha_i \in \langle A \rangle$. If $\nu(q) = 0$, then it follows from part~(3) of Lemma~\ref{lem:atomicity and canonical decomposition of N} that $q$ is an element of $N$ having a unique factorization, and so $q \in S \subseteq \langle A \rangle$. Thus, we assume that $\nu(q) > 0$. As $q - \nu(q)$ is an element of~$N$ having a unique factorization (also by part~(3) of Lemma~\ref{lem:atomicity and canonical decomposition of N}), if the only factorization of $q - \nu(q)$ in~$N$ repeats at least one atom, then $q - \nu(q) \in S \subseteq \langle A \rangle$, which implies that $q \in \nu(q) + \langle A \rangle \subseteq N_0 + \langle A \rangle \subseteq \langle A \rangle$ (the inclusion $N_0 \subseteq \langle A \rangle$ was already argued in the proof of part~(3) of Lemma~\ref{lem:sets A_1, A_2, A_3}). Therefore suppose that the only factorization of $q - \nu(q)$ in $N$ is the sum of distinct atoms. Now write $\nu(q) = \sum_{n \in \nn} d_n q_n$, where $d_n = 0$ for all but finitely many $n \in \nn$, and take an index $j$ such that $d_j > 0$. Thus,
	\begin{equation} \label{eq:lemm max coeff aux 1}
		\nu(q) - q_j \in N_0 \subseteq \langle A \rangle.
	\end{equation}
	Since $q - \nu(q)$ is the sum of distinct atoms of $N$, we see that $2\alpha_j + (q - \nu(q)) \in A_2 \cup A_3$. Moreover, because $p_j - 2 \in \langle 2,3 \rangle$, we see that
	\begin{equation} \label{eq:lem max coeff aux 2}
		q_j + (q - \nu(q)) = (p_j - 2) \alpha_j + \big(2\alpha_j + (q - \nu(q)) \big) \in \langle 2\alpha_j, 3 \alpha_j \rangle + A_2 \cup A_3 \subseteq \langle A \rangle.
	\end{equation}
	Finally, we can use \eqref{eq:lemm max coeff aux 1} and~\eqref{eq:lem max coeff aux 2} to obtain that $q = \big( \nu(q) - q_j \big) + \big( q_j + (q - \nu(q))\big) \in \langle A \rangle + \langle A \rangle \subseteq \langle A \rangle$.
\end{proof}

\smallskip
%%%%%%%%%%%%%%%%%%%%%%%%%
\subsection{Integer-Valued Functions on $M$}

Before proving that $F[M]$ satisfies the almost ACCP for every field $F$, which we shall do in the next section, some further preliminaries are needed. We proceed to introduce, for each $n \in \nn_0 \cup \{\infty\}$, an integer-valued function $\iii_n$ on $M$. These functions will play a crucial role in the proof of our primary result. We start by the case $n=\infty$. Recall that $\beta_0 := 1$ and $\beta_n = 1 - \sum_{i = 1}^n \alpha_i$ for every $n \in \nn$.

\begin{definition}
	The map $\ifi \colon M \to \nn_0$ defined by
	\begin{equation*}
		\ifi(b) := \max \Big\{ m \in \nn_0 : \text{there exist } j_1, j_2, \dots, j_m \in \nn_0 \text{ such that } \sum_{i=1}^m \beta_{j_i} \mid_M b \Big\}
	\end{equation*}
	is called the \emph{indicator map} of $M$ and, for each $b \in M$, the value $\ifi(b)$ is called the \emph{indicator of $b$ at infinity}.
\end{definition}

We are using here the convention that a summation of real numbers over an empty set is $0$. This, along with the fact that $\beta_n > \frac12$ for all $n \in \nn_0$, guarantees that the indicator map in the previous definition is well defined. In the following lemma we will encapsulate some useful properties of the map $\ifi$. %Moreover, as the following lemma indicate the same indicator map is superadditive.

\begin{lemma} \label{lem:indicator infty} %\label{lem:indicator infty sum} \label{lem:indicator infty > 1}} \label{lem:indicator infty = 1}
	Let $M$ be as in~\eqref{eq:main monoid}. Then the following statements hold.
	\begin{enumerate}
		\item $\ifi(b+c) \ge \ifi(b) + \ifi(c)$ for all $b,c \in M$.
		\smallskip
		
		\item If $\ifi(b)\ge 2$ for some $b \in M$, then for all sufficiently large $n \in \nn$ we have $\beta_n \mid_M b$.
		\smallskip
		
		\item If $\ifi(b)=1$ for some $b \in M \setminus B$, then for all sufficiently large $n \in \nn$ we have $\beta_n \mid_M b$.
	\end{enumerate}
\end{lemma}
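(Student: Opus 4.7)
The plan is to handle the three statements in increasing order of difficulty. For~(1), I would simply concatenate witnesses: if $\sum_{i=1}^{\ifi(b)} \beta_{j_i} \mid_M b$ and $\sum_{i=1}^{\ifi(c)} \beta_{k_i} \mid_M c$, then adding the corresponding quotients in $M$ shows that $\sum_{i=1}^{\ifi(b)} \beta_{j_i} + \sum_{i=1}^{\ifi(c)} \beta_{k_i}$ divides $b+c$ in $M$, and this is a sum of $\ifi(b)+\ifi(c)$ elements of $B$.

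For~(2), the key identity is the telescoping $\beta_j = \beta_n + \sum_{i=j+1}^n \alpha_i$, valid for $n \ge j$. Since $\ifi(b) \ge 2$, after pruning extra terms from the witnessing sum, I can assume $\beta_{j_1} + \beta_{j_2} \mid_M b$. For any $n > \max(j_1, j_2)$, I write
\[
   \beta_{j_1} + \beta_{j_2} = 2\beta_n + T, \quad T := \sum_{i=j_1+1}^n \alpha_i + \sum_{i=j_2+1}^n \alpha_i,
\]
and observe that the coefficient of $\alpha_n$ in $T$ equals $2$, so Lemma~\ref{lem:generating set of A} places $T$ in $\langle A \rangle \subseteq M$. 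Therefore $b - \beta_n = \beta_n + T + (b - \beta_{j_1} - \beta_{j_2})$ is a sum of elements of~$M$, so $\beta_n \mid_M b$, as desired.

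For~(3), let $\beta_{j_0}$ witness $\ifi(b) \ge 1$ and set $c := b - \beta_{j_0} \in M$; since $b \notin B$, we have $c \neq 0$. The hypothesis $\ifi(b) = 1$ forces no $\beta_k$ to divide $c$ in~$M$: otherwise $\beta_{j_0} + \beta_k \mid_M b$ would give $\ifi(b) \ge 2$. Since $M$ is atomic (Proposition~\ref{prop:M atomic}) and no factorization of $c$ can contain a $\beta$, every factorization of $c$ lies in $\langle A \rangle$, so $c \in \langle A \rangle$. Writing $c = a_1 + \cdots + a_r$ with each $a_i \in A_2 \cup A_3$, the defining form of $A_2$ and $A_3$ guarantees that the $\alpha$-expansion of $c$ has a coefficient $\ge 2$ at the heavy index of $a_1$. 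Then for any $n > j_0$, telescoping as before yields $b - \beta_n = c + \sum_{i=j_0+1}^n \alpha_i$, and this element still has a coefficient $\ge 2$ in its $\alpha$-expansion, so Lemma~\ref{lem:generating set of A} gives $b - \beta_n \in \langle A \rangle \subseteq M$.

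The main obstacle is the qualitative leap in~(3), where we have only a single $\beta_{j_0}$ to pivot on. The essential moves are (i) turning the negative information ``$\ifi(b) = 1$ and $b \notin B$'' into the positive structural statement ``$c$ is a nonzero element of $\langle A \rangle$,'' and (ii) noting that a \emph{single} atom of $A$ already suffices to force a coefficient $\ge 2$, so appending the telescoping $\alpha_i$'s cannot destroy the hypothesis of Lemma~\ref{lem:generating set of A}. Everything else reduces to the telescoping identity for $\beta_j - \beta_n$ that already powers~(2).
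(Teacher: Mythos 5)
Your proposal is correct and follows essentially the same route as the paper: part (1) by concatenating witnesses, and parts (2) and (3) via the telescoping identity $\beta_j = \beta_n + \sum_{i=j+1}^n \alpha_i$ combined with Lemma~\ref{lem:generating set of A} to absorb the leftover sum of $\alpha$'s into $\langle A \rangle$. Your treatment of (3) is a slight streamlining of the paper's (you use the doubled coefficient already present in the nonzero element $c \in \langle A \rangle$ instead of pairing $\beta_{j_0}$ with a chosen $\alpha_j$ as the paper does), but the ideas and key lemma are the same.
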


\begin{proof}
	(1) Fix $b,c \in M$, and then set $m := \ifi(b)$ and $n := \ifi(c)$. Now take nonnegative integers $j_1, \dots, j_m$ and $k_1, \dots, k_n$ such that $\sum_{i = 1}^m \beta_{j_i} \mid_M b$ and $\sum_{i=1}^n \beta_{k_i} \mid_M c$. Then $\sum_{i = 1}^m \beta_{j_i} + \sum_{i = 1}^n \beta_{k_i}$ divides $b+c$ in~$M$ and, therefore, $\ifi(b+c) \ge m+n = \ifi(b) + \ifi(c)$.
	\smallskip
	
	(2) Suppose now that the inequality $\ifi(b)\ge 2$ holds for some $b \in M$. Then there exist nonnegative integers $m_1, \dots, m_t$ with $t \ge 2$ such that $\sum_{i=1}^t \beta_{m_i} \mid_M b$. Now take $n \in \nn$ such that the inequality $n \ge \max\{m_1, \dots, m_t\} + 1$ holds, and then write
    \[
        \beta_{m_1} = \beta_n + \sum_{i=m_1+1}^n \alpha_i \quad \text{ and } \quad \beta_{m_2} = \beta_n + \sum_{i=m_2+1}^n \alpha_i.
    \]
    Observe that $2 \beta_n + \big( \sum_{i=m_1+1}^n \alpha_i + \sum_{i=m_2+1}^n \alpha_i \big) \mid_M b$. On the other hand, as there are two copies of $\alpha_n$ in the sum $\sum_{i=m_1+1}^n \alpha_i + \sum_{i=m_2+1}^n \alpha_i$, Lemma~\ref{lem:generating set of A} ensures that $\sum_{i=m_1+1}^n \alpha_i + \sum_{i=m_2+1}^n \alpha_i \in \langle A \rangle \subseteq M$. As a result, $2 \beta_n \mid_M b$, which implies that $\beta_n \mid_M b$.
	\smallskip
	
	(3) Finally, suppose that $\ifi(b) = 1$ for some $b \in M$. Take $i \in \nn$ such that $\beta_i \mid_M b$, and then write $b = \beta_i + a$ for some nonzero $a \in \langle A \rangle$. Fix $j \in \nn$ such that $\alpha_j$ divides $a$ in $\langle A \rangle$. Then $\beta_i + \alpha_j \mid_M b$, and proceeding as we did to argue part~(2), we will obtain that $\beta_n \mid_M b$ for each $n \in \nn$ such that $n > \max\{i, j\}$.
\end{proof}

%\begin{lemma}\label{lem:indicator infty sum}
%	Let $M$ be as in~\eqref{eq:main monoid}. Then $\ifi(b+c) \ge \ifi(b) + \ifi(c)$ for all $b,c \in M$.
%\end{lemma}
%
%\begin{proof}
%	Fix $b,c \in M$, and set $m := \ifi(b)$ and $n := \ifi(c)$. Now take nonnegative integers $j_1, \dots, j_m$ and $k_1, \dots, k_n$ such that $\sum_{i = 1}^m \beta_{j_i} \mid_M b$ and $\sum_{i=1}^n \beta_{k_i} \mid_M b$. Then $\sum_{i = 1}^m \beta_{j_i} + \sum_{i = 1}^n \beta_{k_i}$ divides $b+c$ in~$M$ and, therefore, $\ifi(b+c) \ge m+n = \ifi(b) + \ifi(c)$.
%\end{proof}
%
%\begin{lemma}\label{lem:indicator infty > 1}
%	If $\ifi(x)\ge 2$, then for all sufficiently large $n \in \nn$ we have $\beta_n \mid_M x$.
%\end{lemma}
%
%\begin{proof}
%	There exist $\beta_{m_1},\beta_{m_2},\ldots,\beta_{m_t}(t\ge 2)\text{ such that } (\beta_{m_1}+\beta_{m_2}+\cdots+\beta_{m_t})\mid_M x$, then for all $ n\ge \max(m_1,m_2,\ldots,m_t)+1$, we have $2\beta_n\mid_M x$ by \ref{lem:generating set of A}, which implies $\beta_n\mid_M x$.
%\end{proof}
%
%\begin{lemma}\label{lem:indicator infty = 1}
%	If $\ifi(x)=1$ and $x \not\in B$, then for all sufficiently large $n \in \nn$ we have $\beta_n \mid_M x$.
%\end{lemma}
%
%\begin{proof}
%	Write $x=\beta_i+a$ for some $a\in \langle A \rangle\setminus \{ 0 \}$ that contains $\alpha_j$ in its decomposition, then for all $ n > \max(i, j)$ we have $\beta_n\mid_M x$ by \ref{lem:generating set of A}.
%\end{proof}

Before introducing the map $\iii_n$ for every $n \in \nn$, we need to argue the following lemma.

\begin{lemma} \label{lem:existence and uniqueness of s to define I_n}
    For each $b \in M$ and $n \in \nn$, there exists a unique $s \in \zz$ such that 
	\begin{equation} \label{eq:conditions for s}
		- \frac{p_n-1}{2} \le s \le \frac{p_n-1}{2} \quad \text{ and } \quad v_{p_n}(b - s\alpha_n) \ge 0.
	\end{equation}
\end{lemma}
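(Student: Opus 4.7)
The plan is to establish two facts: (i) the key bound $v_{p_n}(b) \ge -1$ for every $b \in M$, and (ii) the translation of the condition on $s$ into a linear congruence modulo $p_n$ with a unique solution in the prescribed range.

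\emph{For (i),} I would proceed atom-by-atom, using Proposition~\ref{prop:M atomic} to reduce to the cases $b \in A$ and $b \in B$. Every element of $A = A_2 \sqcup A_3$ has the form $\sum_i c_i \alpha_{j_i}$ with distinct $j_i \in \nn$ and $c_i \in \{1,2,3\}$; since each $\alpha_{j_i}$ satisfies $v_{p_n}(\alpha_{j_i}) \ge -1$ (with equality precisely when $j_i = n$), and each $c_i$ is coprime to $p_n$ (as $p_n > 100$), the ultrametric inequality yields $v_{p_n}(b) \ge -1$ for $b \in A$. For $\beta_\ell = 1 - \sum_{i=1}^\ell \alpha_i \in B$, the same ultrametric argument applied to the sum gives $v_{p_n}(\beta_\ell) \ge -1$. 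Since an arbitrary $b \in M$ is a nonnegative integer combination of atoms, a final application of the ultrametric inequality propagates the bound to $v_{p_n}(b) \ge -1$.

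\emph{For (ii),} I would work in the localization $\zz_{(p_n)}$. The bound from (i) ensures $p_n b \in \zz_{(p_n)}$. Multiplying the target inequality by $p_n$ and using the identity $p_n \alpha_n = q_n$, the condition $v_{p_n}(b - s\alpha_n) \ge 0$ becomes $p_n b \equiv s q_n \pmod{p_n \zz_{(p_n)}}$, which in the residue field $\zz_{(p_n)}/p_n \zz_{(p_n)} \cong \zz/p_n\zz$ reads as a single linear congruence in $s$. Since $\gcd(q_n, p_n) = 1$, the class of $q_n$ is a unit in $\zz/p_n\zz$, so the congruence determines $s$ uniquely modulo $p_n$. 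Finally, because $p_n > 100$ is an odd prime, the $p_n$ integers in $\bigl[-\tfrac{p_n-1}{2}, \tfrac{p_n-1}{2}\bigr]$ form a complete residue system modulo $p_n$, yielding a unique representative $s$ in this range.

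The main obstacle is the atom-by-atom verification establishing $v_{p_n}(b) \ge -1$ for every $b \in M$; once this bound is secured, the remainder of the argument reduces to the elementary observation that $q_n$ is invertible modulo $p_n$ together with the fact that a symmetric interval of length $p_n$ around $0$ realizes every residue class exactly once when $p_n$ is odd.
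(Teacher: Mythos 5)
Your proposal is correct and follows essentially the same route as the paper: the paper writes $b = c_0 + \sum_i c_i \alpha_i$ with $c_0 \in \nn_0$ and $c_i \in \zz$ (which encodes your bound $v_{p_n}(b) \ge -1$) and observes that $v_{p_n}(b - s\alpha_n) \ge 0$ is equivalent to $p_n \mid c_n - s$, which is exactly your linear congruence with $q_n$ invertible modulo $p_n$ and the symmetric interval serving as a complete residue system. Your atom-by-atom valuation bound plus the localization $\zz_{(p_n)}$ is just a mild repackaging of that same argument, and it is sound.
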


\begin{proof}
Fix $b \in M$ and $n \in \nn$. Since $M$ is generated by the set $A \cup B$, we can write $b = c_0 + \sum_{i=1}^\ell c_i \alpha_i$ for some index $\ell \in \nn$ with $\ell \ge n$ and coefficients $c_0 \in \nn_0$ and $c_1, \dots, c_n \in \zz$. From the fact that $\alpha_n = \frac{q_n}{p_n}$ (with $\gcd(p_n, q_n) = 1$), we obtain that for each $s \in \zz$, the inequality $v_{p_n} (b - s \alpha_n) \ge 0$ holds if and only if $p_n \mid c_n-s$, which immediately implies the existence and uniqueness of $s \in \zz$ satisfying the conditions in~\eqref{eq:conditions for s}.
    % Observe that $v_{p_n}(b - s\alpha_n) \ge 0$ if and only if $p_n$ does not divide the denominator of
    % \[
    %     b - s \alpha_n = \frac{p_n \mathsf{n}(b) - s \mathsf{d}(b)q_n}{\mathsf{d}(b)p_n}.
    % \]
    % If $p_n \mid \mathsf{d}(b)$, then we see that $p_n \nmid \mathsf{d}(b - s \alpha_n)$ if and only if $p_n \mid \mathsf{n}(b) - s \frac{\mathsf{d}(b)}{p_n} q_n$, and the fact that $\gcd\big(p_n, \frac{\mathsf{d}(b)}{p_n} q_n \big) = 1$ ensures that $\big\{ s \frac{\mathsf{d}(b)}{p_n} q_n : - \frac{p_n - 1}2 \le s \le \frac{p_n - 1}2 \big\}$ is a complete residue system module $p_n$, which in turn ensures the existence and uniqueness of $s \in \zz$ with $- \frac{p_n - 1}2 \le s \le \frac{p_n - 1}2$ such that $p_n \mid \mathsf{n}(b) - s \frac{\mathsf{d}(b)}{p_n} q_n$. On the other hand, if $p_n \nmid \mathsf{d}(b)$, then we see that $p_n \nmid \mathsf{d}(b - s \alpha_n)$ if and only if $p_n \mid p_n \mathsf{n}(b) - s \mathsf{d}(b) q_n$, which happens precisely when $s=0$ because $\gcd(p_n, \mathsf{d}(b)q_n) = 1$.
\end{proof}

We are in a position to introduce the map $\iii_n \colon M \to \zz$ for every $n \in \nn$.

\begin{definition}
	For each $n \in \nn$, the \emph{indicator map at} $n$ is the map $\iii_n \colon M \to \zz$ defined as follows: for $b \in M$, we let $\iii_n(b)$ be the unique $s \in \zz$ such that both conditions in \eqref{eq:conditions for s} hold.
	% \begin{equation*}
	% 	- \frac{p_n-1}{2} \le s \le \frac{p_n-1}{2} \quad \text{ and } \quad v_{p_n}(b - s\alpha_n) \ge 0.
	% \end{equation*}
\end{definition}

In light of Lemma~\ref{lem:existence and uniqueness of s to define I_n}, for each $n \in \nn$, the map $\iii_n$ is well defined. As we did with the map $\ifi$, we collect some properties of the maps $\iii_n$ in the following lemma.

\begin{lemma} \label{lem:indicator n} %\label{lem:indicator n inequality} \label{lem:indicator n sum}
	Let $M$ be as in~\eqref{eq:main monoid}. Then the following statements hold.
	\begin{enumerate}
		\item For each $n \in \nn$, if $b \in M$ and $b \le \frac{q_n}{100}$, then $\lvert \iii_n(b) \rvert \le \frac{p_n}6$.
		\smallskip
		
		\item For each $n \in \nn$, if $b,c \in M$ and $b+c \le \frac{q_n}{100}$, then $\iii_n(b+c) = \iii_n(b) + \iii_n(c)$.
        \smallskip

        \item For each $n \in \nn$, if $b \in \langle A \rangle \subset N$ and $b \le \frac{q_n}{100}$, then $\iii_n(b) = c_n(b)$.
	\end{enumerate}
\end{lemma}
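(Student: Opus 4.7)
The plan is to read $\iii_n(b)$ as the ``$\alpha_n$-coefficient'' of $b$, extracted from any expression of $b$ in terms of the generators $A \cup B$, and to exploit that the hypothesis $b \le q_n/100$ forces this coefficient to land in a range much smaller than $[-(p_n-1)/2,\,(p_n-1)/2]$. I would carry out the three parts in the order (3), (1), (2).

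For part~(3), given $b \in \langle A \rangle \subseteq N$, I invoke Lemma~\ref{lem:atomicity and canonical decomposition of N}(3) to write the canonical decomposition $b = \nu(b) + \sum_i c_i(b)\alpha_i$ with $\nu(b) \in N_0 \subseteq \nn_0$ and $0 \le c_i(b) < p_i$. Then $c_n(b)\alpha_n \le b \le q_n/100$ gives $c_n(b) \le p_n/100$. Because the primes $p_1, p_2, \dots$ and $q_1, q_2, \dots$ are pairwise distinct, $v_{p_n}(\alpha_i) = 0$ for every $i \ne n$ and $v_{p_n}(\nu(b)) \ge 0$; hence $v_{p_n}(b - c_n(b)\alpha_n) \ge 0$. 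As $c_n(b) \in [0,\, p_n/100] \subseteq [-(p_n-1)/2,\,(p_n-1)/2]$, uniqueness in Lemma~\ref{lem:existence and uniqueness of s to define I_n} forces $\iii_n(b) = c_n(b)$.

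For part~(1), I write a general $b \in M$ as $b = \sum_j k_j \beta_{m_j} + b'$ with $k_j \in \nn_0$ and $b' \in \langle A \rangle$. Expanding $\beta_{m_j} = 1 - \sum_{i=1}^{m_j} \alpha_i$ and combining with the canonical decomposition of $b'$ in $N$ gives
\[
b \;=\; \bigl(K + \nu(b')\bigr) \,+\, \sum_i \bigl(c_i(b') - K_i\bigr)\alpha_i,
\]
where $K := \sum_j k_j$ and $K_i := \sum_{j:\, m_j \ge i} k_j$. The same valuation-of-$p_n$ argument as in part~(3) shows $v_{p_n}(b - s\alpha_n) \ge 0$ for $s := c_n(b') - K_n$. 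Since each $\beta_{m_j} > 99/100$ (as $\sum_i \alpha_i < 1/100$), $K \le \tfrac{100}{99}b \le q_n/99$, so $K_n \le p_n/99$; combined with $c_n(b') \le p_n/100$, this yields $|s| \le \max\{c_n(b'),\, K_n\} < p_n/6 \le (p_n-1)/2$. Uniqueness in Lemma~\ref{lem:existence and uniqueness of s to define I_n} now gives $\iii_n(b) = s$, hence $|\iii_n(b)| \le p_n/6$.

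For part~(2), subtracting the two valuation inequalities $v_{p_n}\bigl((b+c) - (\iii_n(b)+\iii_n(c))\alpha_n\bigr) \ge 0$ and $v_{p_n}\bigl((b+c) - \iii_n(b+c)\alpha_n\bigr) \ge 0$ and using $v_{p_n}(\alpha_n) = -1$ produces $p_n \mid \iii_n(b+c) - \iii_n(b) - \iii_n(c)$. Applying part~(1) to $b$, $c$, and $b+c$ (all $\le q_n/100$) bounds this integer difference in absolute value by $3 \cdot p_n/6 = p_n/2 < p_n$, forcing it to vanish. The main (minor) obstacle is just the additive book-keeping after opening every $\beta_{m_j}$; once the decomposition in part~(1) is written down, the estimates are elementary and the constant $100$ built into the hypothesis $b \le q_n/100$ provides ample slack.
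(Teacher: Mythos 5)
Your proposal is correct and follows essentially the same route as the paper: in both arguments one reads off the $\alpha_n$-coefficient of $b$ from a decomposition into $B$-generators plus an element of $\langle A\rangle$, checks the $p_n$-adic valuation condition, and uses the bound $b \le \frac{q_n}{100}$ (together with $\beta_i$ being close to $1$ and $\alpha_n = \frac{q_n}{p_n}$) to force the representative into the window $\big[-\frac{p_n-1}{2}, \frac{p_n-1}{2}\big]$, with part~(2) then following from the congruence modulo $p_n$ and the size bounds. The only cosmetic difference is that you identify the coefficient via one explicit global decomposition and a single appeal to the uniqueness in Lemma~\ref{lem:existence and uniqueness of s to define I_n}, whereas the paper bounds the contributions of the $\beta$-part and the $\langle A\rangle$-part separately through the congruence $\iii_n(b) \equiv \iii_n(\beta\text{-part}) + \iii_n(a) \pmod{p_n}$.
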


\begin{proof}
	(1) Take $n \in \nn$ and $b \in M$ such that $b \le \frac{q_n}{100}$. Now write $b = (\beta_{m_1} + \dots +\beta_{m_k}) + a$ for some $m_1, \dots, m_k \in \nn_0$ and $a \in \langle A \rangle$. Since
	\[
		\iii_n(b) \equiv \iii_n(\beta_{m_1} + \cdots + \beta_{m_k}) + \iii_n(a) \pmod{p_n},
	\]
	we only need to show that $-\frac{p_n}6 \le \iii_n(\beta_{m_1} + \dots + \beta_{m_k}) \le 0$ and $0 \le \iii_n(a) \le \frac{p_n}6$. We first argue that
	\begin{equation} \label{eq:inequality for i_n(beta)}
		-\frac{p_n}6 \le \iii_n(\beta_{m_1} + \dots +\beta_{m_k}) \le 0.
	\end{equation}
	Towards this end, observe that $m := |\{i \in \ldb 1, k \rdb : m_i \ge n \}|$ is the number of indices $m_i$ (with $i \in \ldb 1,k \rdb$) such that $-\alpha_n$ is a summand of $\beta_{m_i}$. Thus, for any $s \in \zz$, the inequality $v_{p_n} \big( \big(\sum_{i=1}^k \beta_{m_i}\big) - s\alpha_n \big) \ge 0$ holds if and only if $p_n \mid -m - s$, and so
	\begin{equation} \label{eq:i_n congruent to -m mod p_n}
		\iii_n(\beta_{m_1} + \dots + \beta_{m_k}) \equiv - m \pmod{p_n}.
	\end{equation}
	On the other hand, $k < 2 (\beta_{m_1} + \dots + \beta_{m_k}) < 2b$ (this is because $\frac12 < \beta_i$ for every $i \in \nn_0$), from which we obtain that
	\begin{equation*}
		0 \le m \le k < 2b \le \frac{2q_n}{100} < \frac{q_n}6 < \frac{p_n}6.
	\end{equation*}
    Therefore $-\frac{p_n - 1}2 \le -m, \iii_n(\beta_{m_1} + \dots + \beta_{m_k}) \le \frac{p_n - 1}2$, and so the congruence \eqref{eq:i_n congruent to -m mod p_n} guarantees that $\iii_n(\beta_{m_1} + \dots + \beta_{m_k}) = - m \in \big[\!-\!\frac{p_n}6, 0\big]$, which is~\eqref{eq:inequality for i_n(beta)}.
    
	Now we argue $0 \le \iii_n(a) \le \frac{p_n}6$. To do this, write $a = c_1 \alpha_1 + \dots + c_\ell \alpha_\ell$ for some $\ell \in \nn$ with $\ell \ge n$ and $c_1, \dots, c_\ell \in \nn_0$. Then $\iii_n(a) \equiv c_n \pmod{p_n}$, and so we are done because
	\begin{equation*}
		0 \le c_n \le \left\lfloor \frac{b}{\alpha_n} \right\rfloor \le \frac{b}{\alpha_n} = \frac{b}{q_n/p_n}\le \frac{q_n/100}{q_n/p_n} = \frac{p_n}{100} < \frac{p_n}{6}.
	\end{equation*}
	\smallskip
	
	(2) Now take $n \in \nn$ and $b,c \in M$ such that $b+c \le \frac{q_n}{100}$. As $\iii_n(b+c) \equiv \iii_n(b) + \iii_n(c) \pmod{p_n}$, the only way that the equality $\iii_n(b+c) = \iii_n(b) + \iii_n(c)$ does not hold is that $\lvert \iii_n(b) + \iii_n(c) \rvert > \frac{p_n-1}{2}$. However, $\lvert \iii_n(b) + \iii_n(c) \rvert \le \lvert \iii_n(b) \rvert + \lvert \iii_n(c) \rvert \le \frac{p_n}3$, where the last inequality follows from part~(1) and, on the other hand, we can see that $\frac{p_n}{3} < \frac{p_n-1}{2}$ because $p_n > 100$.
    \smallskip

    (3) Let $b = \nu(b) + \sum_{n \in \nn} c_n(b) \alpha_n$ be the canonical decomposition of $b$ in $N$, which was introduced in~\eqref{eq:canonical lifting decomposition}. Observe that $c_n(b) \le \frac{p_n-1}2$ as otherwise $b \ge c_n(b)\alpha_n > \frac{p_n-1}{2p_n}q_n > \frac{q_n}{100}$. This observation, along with the fact that $v_{p_n}(b - s\alpha_n) \ge 0$ if and only if $p_n \mid c_n(b) - s$, guarantees that $\iii_n(b) = c_n(b)$, as desired.
\end{proof}

\medskip
%%%%%%%%%%%%%%%%%%%
\subsection{$F[M]$ Satisfies the Almost ACCP}

Let $M$ be the Puiseux monoid introduced in~\eqref{eq:main monoid}, and let~$F$ be a field, which will be fixed for the rest of this section. It turns out that the monoid algebra $F[M]$ satisfies the almost ACCP but not the ACCP, and we dedicate the rest of this section to give a proof of this fact. Let us start by defining the map $\ifi \colon F[M]^* \to \nn_0$ as follows:
\[
	\ifi(f) := \min \big\{ \ifi(s) : s \in \text{supp}\, f \big\}
\]
for all $f \in F[M]^*$. Also, for each $n \in \nn$, set
\[
    W_n := \Big\{f \in F[M]^* : \deg f \le \frac{q_n}{100} \Big\}.
\]
Clearly, $(W_n)_{n \ge 1}$ is an ascending chain of subsets of $F[M]$ whose union is $F[M]$. For each $n \in \nn$, define the map $\iii_n \colon W_n \to \zz$ as follows:
\[
	\iii_n(f) := \max \big\{ \iii_n(s) : s \in \text{supp}\, f \big\}
\]
for all $f \in W_n$. %Here is a comment on our choice of notation.

\begin{remark}
	Although the maps $\ifi \colon M \to \nn_0$ and $\iii_n \colon M \to \zz$ (for every $n \in \nn$) introduced in the previous subsection are respectively denoted as the maps $\ifi \colon F[M]^* \to \nn_0$ and $\iii_n \colon W_n \to \zz$ introduced in this subsection, this should not cause any ambiguity given that the domains of the equally-denoted maps have different nature.
\end{remark}

We proceed to show that the maps $\ifi \colon F[M]^* \to \nn_0$ and $\iii_n \colon W_n \to \zz$ (for every $n \in \nn$) also satisfy some convenient properties. 

\begin{lemma} \label{lem:f indicator maps} %\label{lem:f indicator infty sum} %\label{lem:f indicator n sum}
	The following statements hold.
	\begin{enumerate}
		\item $\ifi(fg) \ge \ifi(f) + \ifi(g)$ for all $f,g \in F[M]^*$. 
		\smallskip
		
		\item $\iii_n(fg) = \iii_n(f) + \iii_n(g)$ for all $n \in \nn$ and $f,g \in W_n$ such that $\deg(fg) \le \frac{q_n}{100}$.
	\end{enumerate}
\end{lemma}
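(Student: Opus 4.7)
The plan is to reduce both parts to the additive properties established at the monoid level in Lemmas~\ref{lem:indicator infty}(1) and~\ref{lem:indicator n}(2), using the elementary fact that every $s \in \text{supp}(fg)$ can be written (possibly non-uniquely) as $s = b + c$ with $b \in \text{supp}(f)$ and $c \in \text{supp}(g)$, because the coefficient of $x^s$ in $fg$ is $\sum_{b+c = s} f_b g_c$ and this sum is nonzero. For part~(1), I would fix any $s \in \text{supp}(fg)$ and choose such a decomposition $s = b+c$; applying Lemma~\ref{lem:indicator infty}(1) and the bounds $\ifi(b) \ge \ifi(f)$ and $\ifi(c) \ge \ifi(g)$ coming from the minimum definitions yields $\ifi(s) \ge \ifi(b) + \ifi(c) \ge \ifi(f) + \ifi(g)$. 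Taking the minimum over $s \in \text{supp}(fg)$ then gives the desired inequality.

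Part~(2) has two halves. For the upper bound, observe that any $s \in \text{supp}(fg)$ satisfies $s \le \deg(fg) \le q_n/100$, so writing $s = b+c$ as above with $b \in \text{supp}(f)$ and $c \in \text{supp}(g)$, Lemma~\ref{lem:indicator n}(2) applies and gives $\iii_n(s) = \iii_n(b) + \iii_n(c) \le \iii_n(f) + \iii_n(g)$; maximizing over $s$ yields $\iii_n(fg) \le \iii_n(f) + \iii_n(g)$.

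The main obstacle is the matching lower bound: since $\iii_n$ on the polynomial ring is defined as a \emph{maximum} over the support, cancellation in the convolution $\sum_{b+c=s} f_b g_c$ could in principle kill the terms of largest $\iii_n$-value at the candidate exponent. To rule this out, I would select distinguished witnesses by setting
\[
    b^* := \max\{b \in \text{supp}(f) : \iii_n(b) = \iii_n(f)\}, \qquad c^* := \max\{c \in \text{supp}(g) : \iii_n(c) = \iii_n(g)\},
\]
where the maxima are taken with respect to the usual order on $\qq$. Suppose $(b, c) \in \text{supp}(f) \times \text{supp}(g)$ satisfies $b + c = b^* + c^*$; since $b^* + c^* \le \deg(fg) \le q_n/100$, Lemma~\ref{lem:indicator n}(2) forces $\iii_n(b) + \iii_n(c) = \iii_n(b^*) + \iii_n(c^*)$, and comparing with the individual bounds $\iii_n(b) \le \iii_n(f) = \iii_n(b^*)$ and $\iii_n(c) \le \iii_n(g) = \iii_n(c^*)$ yields $\iii_n(b) = \iii_n(b^*)$ and $\iii_n(c) = \iii_n(c^*)$. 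The maximality of $b^*$ and $c^*$ then gives $b \le b^*$ and $c \le c^*$, and the equality $b + c = b^* + c^*$ collapses this to $(b, c) = (b^*, c^*)$. Thus $(b^*, c^*)$ is the unique pair in $\text{supp}(f) \times \text{supp}(g)$ summing to $b^* + c^*$, so the coefficient of $x^{b^* + c^*}$ in $fg$ equals $f_{b^*} g_{c^*} \ne 0$, hence $b^* + c^* \in \text{supp}(fg)$, and we conclude $\iii_n(fg) \ge \iii_n(b^* + c^*) = \iii_n(b^*) + \iii_n(c^*) = \iii_n(f) + \iii_n(g)$.
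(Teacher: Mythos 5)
Your proof is correct and follows essentially the same route as the paper: part (1) reduces to Lemma~\ref{lem:indicator infty}(1) via a decomposition of supports, and part (2) handles possible cancellation by picking the largest exponents $b^*, c^*$ attaining the maximal indicator and showing $(b^*, c^*)$ is the unique pair summing to $b^*+c^*$, exactly as in the paper's argument with $q_m, r_m$. No gaps; the only difference is cosmetic (you phrase the maximality step directly rather than by contradiction).
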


\begin{proof}
	(1) Fix $f,g \in F[M]^*$. Take $s \in \text{supp} \, fg$ such that $\ifi(s) = \ifi(fg)$, and then take $q \in \text{supp} \, f$ and $r \in \text{supp} \, g$ such that $s = q+r$. It follows from part~(1) of Lemma~\ref{lem:indicator infty} that
	\[
		\ifi(s) \ge \ifi(q) + \ifi(r) \ge \ifi(f) + \ifi(g)
	\]
	and, as a consequence, $\ifi(fg) = \ifi(s) \ge \ifi(f) + \ifi(g)$, as desired.
	\smallskip
	
	(2) Fix $n \in \nn$ and $f,g \in F[M]^*$ such that $\deg fg \le \frac{q_n}{100}$. Now take $s \in \text{supp} \, fg$ with $\iii_n(fg) = \iii_n(s)$, and then pick $q \in \text{supp} \, f$ and $r \in \text{supp} \, g$ such that $s = q+r$. Then we see that
	\[
		\iii_n(fg) = \iii_n(s) = \iii_n(q+r) = \iii_n(q) + \iii_n(r) \le \iii_n(f) + \iii_n(g),
	\]
	 where the last equality follows from part~(2) of Lemma~\ref{lem:indicator n} as $q+r \le \frac{q_n}{100}$. 
%	 As a consequence,
%	 \begin{align*}
%	 	\iii_n(fg) &\ge \max \{ \iii_n(q) + \iii_n(r) : q \in \text{supp} \, f \text{ and } r \in \text{supp} \, g \} \\
%	 				   &= \max\{ \iii_n(q) : q \in \text{supp} \, f \} + \max\{ \iii_n(r) : r \in \text{supp} \, g \} \\
%	 				   &= \iii_n(f)+\iii_n(g)
%	 \end{align*}
	 To argue the reverse inequality, set
	\[
		q_m := \max \big\{ q \in \text{supp} \, f : \iii_n(q) = \iii_n(f) \big\}
	\]
	and
	\[
		r_m := \max \big\{ r \in \text{supp} \, g : \iii_n(r) = \iii_n(g) \big\}.
	\]
	We claim that $q_m + r_m \in \text{supp} \, fg$; that is, $x^{q_m+r_m}$ cannot be canceled out when unfolding $fg$. To argue this, take $q \in \text{supp} \, f$ and $r \in \text{supp} \, g$ such that $q_m + r_m = q+r$. If $q > q_m$, then it follows from the maximality of $q_m$ that $\iii_n(q) < \iii_n(q_m)$ and so the fact that $\iii_n(r) \le \iii_n(r_m)$, along with part~(2) of Lemma~\ref{lem:indicator n}, would imply that
    \[
        \iii_n(q+r) = \iii_n(q) + \iii_n(r) < \iii_n(q_m) + \iii_n(r_m) = \iii_n(q_m + r_m),
    \]
    which is not possible. Thus, $q \le q_m$. In the same way, we can argue that $r \le r_m$. Therefore the equality $q_m + r_m = q+r$ implies that $q = q_m$ and $r = r_m$. Hence our claim follows: $q_m + r_m \in \text{supp} \, fg$. %indeed, if $q_m + r_m = q + r$ for some $q \in \text{supp} \, f$ and $r \in \text{supp} \, g$, then $\iii_n(q) = \iii_n(f)$ and $\iii_n(r) = \iii_n(g)$, and so the maximality in our choice of both $q_m$ and $r_m$ ensures that $q = q_m$ and $r = r_m$. This implies that $q_m + r_m \in \text{supp} \, fg$.
    As a consequence,
	\[
		\iii_n(fg) \ge \iii_n(q_m + r_m) = \iii_n(q_m) + \iii_n(r_m) = \iii_n(f) + \iii_n(g),
	\] 
	where the first equality follows from part~(2) of Lemma~\ref{lem:indicator n}. %because $q_m + r_m \le \frac{q_n}{100}$. % write this proof in terms of $r,s,t$.
\end{proof}

%\begin{lemma}\label{lem:f indicator n sum}
%	If $\deg(fg) \le q_n/100$, then $\iii_n(fg) =\iii_n(f) + \iii_n(g)$.
%\end{lemma}
%
%\begin{proof}
%	From lemma 8 we can see that $\iii_n(fg) \le \iii_n(f)+\iii_n(g)$. Now we prove the reverse side. Take $\hat{a} = \max \big\{ a \in \supp\, f \mid \iii_n(a) = \iii_n(f) \big\}, \hat{b} = \max \big\{ b \in \supp\, g \mid \iii_n(b) = \iii_n(f) \big\}$, then we see that $\hat{a} + \hat{b} \in \text{supp}(fg)$. This is because the term $x^{\hat{a}+\hat{b}}$ cannot be canceled out (if $\hat{a} + \hat{b} = a + b$, then $\iii_n(a) = \iii_n(f), \iii_n(b) = \iii_n(g)$, but we take $\hat{a}$ and $\hat{b}$ to be the maximum among this choices). So $\iii_n(fg) \ge \iii_n(\hat{a} + \hat{b}) = \iii_n(\hat{a}) + \iii_n(\hat{b}) = \iii_n(f) + \iii_n(g)$ by \ref{lem:indicator n sum}.
%\end{proof}

This is the last auxiliary result we need before proving our last main theorem.

\begin{lemma} \label{lem:f indicator ge 0 and eq 0}%\label{lem:certain f indicator n = 0} %\label{lem:any f indicator n = 0}
	For any $n \in \nn$ and $f \in W_n$, the following statements hold.
	\begin{enumerate}
		\item If $\ifi(f) = 0$, then $\iii_k(f) \ge 0$ for every $k \in \nn$ with $k \ge n$. %for all $n$ such that $\deg f \le q_n/100$.
		\smallskip
		
		\item %For each $f \in F[M]^*$, there 
		There exists $k_0 \in \nn$ such that $\iii_k(f) = 0$ for every $k \ge k_0$.
	\end{enumerate}
\end{lemma}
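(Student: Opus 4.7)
The plan is to reduce both parts to direct rational-arithmetic claims about the (finitely many) elements of $\text{supp}\, f$, invoking Lemma~\ref{lem:indicator n}(3) for part~(1) and a short $p_k$-adic valuation argument for part~(2). In particular, I will bypass any more delicate analysis of how $\iii_k$ interacts with products or factorizations.

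For part~(1), I first observe that, since $\ifi$ takes values in $\nn_0$, the equality $\ifi(f) = 0$ is equivalent to the existence of some $s_0 \in \text{supp}\, f$ with $\ifi(s_0) = 0$. The key claim is that such an $s_0$ must lie in $\langle A \rangle$: by Proposition~\ref{prop:M atomic}, every element of $M$ factors into atoms from $A \cup B$, and any appearance of some atom $\beta_j$ in a factorization of $s_0$ would yield $\beta_j \mid_M s_0$ and hence $\ifi(s_0) \ge 1$. With $s_0 \in \langle A \rangle \subseteq N$ in hand, and noting that $s_0 \le \deg f \le q_n/100 \le q_k/100$ for every $k \ge n$, Lemma~\ref{lem:indicator n}(3) applies to give $\iii_k(s_0) = c_k(s_0) \ge 0$. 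Since $\iii_k(f)$ is the maximum of $\iii_k(s)$ over $s \in \text{supp}\, f$, we obtain $\iii_k(f) \ge \iii_k(s_0) \ge 0$.

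For part~(2), I exploit the finiteness of $\text{supp}\, f$: each $s \in \text{supp}\, f$ is a fixed positive rational, which I write as $s = a_s/b_s$ in lowest terms. I choose $k_0 \ge n$ with $p_{k_0} > \max\{b_s : s \in \text{supp}\, f\}$; this is possible since $p_k \to \infty$. For $k \ge k_0$ and each $s \in \text{supp}\, f$, the condition $p_k \nmid b_s$ gives $v_{p_k}(s) \ge 0$, so $t = 0$ satisfies the valuation condition in the definition of $\iii_k(s)$. On the other hand, for any nonzero $t \in \ldb -(p_k-1)/2, (p_k-1)/2 \rdb$, the fact that $p_k \nmid t q_k$ yields $v_{p_k}(t \alpha_k) = -1$, and therefore $v_{p_k}(s - t \alpha_k) = -1 < 0$. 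By the uniqueness of the integer in the defining condition of $\iii_k$, we conclude that $\iii_k(s) = 0$ for every $s \in \text{supp}\, f$, so $\iii_k(f) = 0$.

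I do not anticipate any substantive obstacle here; the only bookkeeping is to maintain $k \ge n$ throughout so that $f \in W_k$ (which is automatic because $(q_n)$ is increasing), and to use that $p_k \to \infty$ so that eventually $p_k$ exceeds every denominator appearing in $\text{supp}\, f$. The mildly non-trivial input is the identification $\ifi(s_0) = 0 \Rightarrow s_0 \in \langle A \rangle$ used in part~(1), but this is immediate from $\mathcal{A}(M) = A \cup B$.
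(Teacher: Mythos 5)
Your proposal is correct and follows essentially the same route as the paper: for part (1) you pick $s_0\in\mathrm{supp}\,f$ with $\ifi(s_0)=0$, deduce $s_0\in\langle A\rangle$, and apply part (3) of Lemma~\ref{lem:indicator n} exactly as the paper does; for part (2) your denominator argument is just an explicit version of the paper's observation that $v_{p_k}(s)\ge 0$ for all sufficiently large $k$, forcing $\iii_k(s)=0$ by the uniqueness in Lemma~\ref{lem:existence and uniqueness of s to define I_n}. No gaps; the extra verification that nonzero $t$ fail the valuation condition is redundant given that uniqueness, but harmless.
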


\begin{proof}
	(1) Suppose that $\ifi(f) = 0$. Take $q \in \text{supp} \, f$ such that $\ifi(q) = 0$, which is equivalent to the fact that no element of $B$ divides $q$ in $M$. This implies that $q \in \langle A \rangle$. Now fix $k \in \nn$ with $k \ge n$. As the sequence $(q_i)_{i \ge 1}$ is strictly increasing, it follows that $q \le \deg f \le \frac{q_n}{100} \le \frac{q_k}{100}$, and so part~(3) of Lemma~\ref{lem:indicator n} guarantees that $\iii_k(q) = c_k(q)$, whence $\iii_k(f) \ge \iii_k(q) \ge 0$.
	\smallskip
	
	(2) For each $q \in \text{supp} \, f$, one can fix $n_q \in \nn$ such that, for every $n \ge n_q$, the inequality $v_{p_n}(q) \ge 0$ holds, which means that $\iii_n(q) = 0$. Thus, after taking $k_0 \in \nn$ larger than both $n$ and $\max\{n_q : q \in \text{supp} \, f \}$, we see that whenever $k \ge k_0$ the inequality $\iii_k(f) = \max\{\iii_k(q) : q \in \text{supp} \, f\} = 0$ must hold.
    %It suffices to observe that if $\alpha_n$ does not appear in the decomposition of any element in $\text{supp} \, f$ and $\deg f \le \frac{q_n}{100}$, then the equality $\iii_n(f) = 0$ most hold.
\end{proof}

%\begin{lemma}\label{lem:certain f indicator n = 0}
%	If $\ifi(f) = 0$, then $\iii_n(f) \ge 0$ for all $n$ such that $\deg f \le q_n/100$.
%\end{lemma}
%
%\begin{proof}
%	This is because $a \in \text{supp} \, f$ for some $a \in \langle A \rangle$, in which case $\iii_n(a) \ge 0$.
%\end{proof}

We are finally in a position to prove that the monoid algebra $F[M]$ satisfies the almost ACCP for any field $F$.

\begin{theorem}\label{thm:rank 1 half-ACCP}
	For each field $F$, the monoid algebra $F[M]$ satisfies the almost ACCP.
\end{theorem}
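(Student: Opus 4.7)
The proof follows the overall template of Theorem~\ref{thm:main}, proceeding in two main stages. First, I would establish that the monoid $M$ itself satisfies the almost ACCP (Part A). Second, I would show that any \emph{ACCP-supported} polynomial expression in $F[M]$---meaning any $f \in F[M]^*$ whose support contains some $s \in M$ satisfying the ACCP in $M$---itself satisfies the ACCP in $F[M]$ (Part B). A union-of-supports argument (Part C) then combines these to yield the theorem.

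For Part A, the key preliminary observation is that every $u \in \langle A \rangle$ satisfies the ACCP in $M$: by the superadditivity of $\ifi$ (Lemma~\ref{lem:indicator infty}(1)), every $M$-divisor $y$ of $u$ has $\ifi(y) = 0$, and for $y \in M$ this forces $y \in \langle A \rangle$ (since the presence of any $\beta_j$ in a factorization of $y$ would yield $\beta_j \mid_M y$). Hence the divisor chain of $u$ in $M$ lies in $\langle A \rangle \subseteq N$, and $N$ satisfies the ACCP by Lemma~\ref{lem:atomicity and canonical decomposition of N}(1). Given a finite $S \subset M$, if some $s_i \in S$ lies in $\langle A \rangle \cup B$ (with the $B$-case invoking that each $\beta_j$ is an atom), then $s_i$ itself satisfies the ACCP and we may take $d=0$. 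Otherwise $\ifi(s_i) \ge 1$ and $s_i \notin B$ for every $i$, and with $n := \min_i \ifi(s_i)$, Lemma~\ref{lem:indicator infty}(2)--(3) combined with Lemma~\ref{lem:generating set of A} produces an integer $m$ large enough that $n\beta_m \mid_M s_i$ for every $i$. Setting $d := n\beta_m$, for $s_j$ attaining the minimum superadditivity of $\ifi$ forces $s_j - d \in \langle A \rangle$, which satisfies the ACCP.

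For Part B, let $f_0 \in F[M]^*$ be ACCP-supported with witness $s_0 \in \text{supp}\, f_0$, and consider an ascending chain of principal ideals $(f_n F[M])_{n \ge 0}$ with $g_n := f_n/f_{n+1}$. A K\"onig's lemma argument on the tree of decompositions $s_0 = t_n + \sum_{i < n} u_{n,i}$ (with $t_n \in \text{supp}\, f_n$ and $u_{n,i} \in \text{supp}\, g_i$) extracts a consistent sequence $(t_n, u_n)_{n \ge 0}$ with $t_n = t_{n+1} + u_n$; the chain $(t_n M)_{n \ge 0}$ in $M$ starts at $s_0 M$ and stabilizes by the ACCP of $s_0$, forcing $u_n = 0$, and hence $0 \in \text{supp}\, g_n$, for all $n \ge N_0$. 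By Lemma~\ref{lem:f indicator maps}(1) together with the finiteness of $\ifi(f_0)$, we obtain $\ifi(g_n) = 0$ for $n \ge N_1$, and the degree additivity $\sum_n \deg g_n \le \deg f_0$ gives $\deg g_n < \tfrac12$ for $n \ge N_2$. For such $n$, if $g_n$ is a non-unit then some $s \in \text{supp}\, g_n$ satisfies $0 < s < \tfrac12$, which forces $s \in \langle A \rangle$ with $\nu(s) = 0$ (since $q_1 > 100 > \tfrac12$), so $c_k(s) > 0$ for some $k$, and Lemma~\ref{lem:indicator n}(3) yields $\iii_k(g_n) \ge c_k(s) \ge 1$. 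The conservation of $\iii_k$ (Lemma~\ref{lem:f indicator maps}(2)), together with Lemma~\ref{lem:f indicator ge 0 and eq 0}(1)--(2), bounds, for each fixed $k$, the number of $g_n$ with $\iii_k(g_n) \ge 1$, while the eventual vanishing $\iii_k(f_0) = 0$ for $k \ge k^*$ restricts the relevant $k$ to a finite range; this contradicts having infinitely many non-unit $g_n$.

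For Part C, given $f_1, \dots, f_k \in F[M]^*$, we apply Part A to $S := \bigcup_i \text{supp}\, f_i \subset M$, obtaining $d \in M$ common divisor of $S$ and some $s \in \text{supp}\, f_j$ with $s - d$ satisfying the ACCP in $M$. Then $x^d$ is a common divisor of the $f_i$'s in $F[M]$, and $f_j/x^d$ is ACCP-supported (with witness $s - d$), so it satisfies the ACCP by Part B. I expect the main obstacle to be the final step of Part B: carefully handling the potentially negative values of $\iii_k(f_n)$ (arising from $\beta$-components in $\text{supp}\, f_n$) when using the conservation equation $\iii_k(f_0) = \iii_k(f_n) + \sum_{i<n}\iii_k(g_i)$, in order to honestly bound the contributions of non-unit $g_n$'s and restrict the relevant indices $k$ to a genuinely finite range.
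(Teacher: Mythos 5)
Your overall architecture (strip a common atomic monomial divisor, then prove the ACCP for one remaining polynomial via the indicator maps) parallels the paper, but Part A contains a false claim that breaks the chain of reasoning. You assert that every $u \in \langle A \rangle$ satisfies the ACCP in $M$ because every $M$-divisor of $u$ has $\ifi$-value $0$; equivalently, you are using the converse of the paper's implication ``$\ifi(q)=0 \Rightarrow q \in \langle A \rangle$'', and that converse fails. Since $\beta_0 = 1 \in B$, the monoid $M$ contains every nonnegative integer, and this lets the $\beta$'s divide large elements of $\langle A \rangle$: take $u := (p_1+1)\alpha_1 \in \langle 2\alpha_1, 3\alpha_1 \rangle \subseteq \langle A \rangle$; then $u - 2\beta_1 = (q_1 - 2)\beta_0 + 3\alpha_1 \in M$, so $2\beta_1 \mid_M u$ and $\ifi(u) \ge 2$, and the ascending chain $u + M \subseteq 2\beta_1 + M \subseteq 2\beta_2 + M \subseteq \cdots$ (using $2\beta_n = 2\beta_{n+1} + 2\alpha_{n+1}$) does not stabilize, so $u$ does \emph{not} satisfy the ACCP in $M$. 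Hence membership in $\langle A \rangle$ is not an ACCP certificate, your Case ``some $s_i \in \langle A \rangle$, take $d=0$'' is wrong, and the Case-2 witness $s_j - n\beta_m$, certified only by $s_j - n\beta_m \in \langle A \rangle$, is not justified either (it can be salvaged by superadditivity of $\ifi$, which forces $\ifi(s_j - d) = 0$, but that is not the argument you gave; also Lemma~\ref{lem:indicator infty}(2) yields one copy of $\beta_m$, not $n$ copies, so ``$n\beta_m \mid_M s_i$ for all $i$'' needs its own proof). The correct dividing line, which the paper uses, is the value of $\ifi$ (and membership of some $\beta_k$ in the support), not membership in $\langle A \rangle$.

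Part B stops exactly where the paper's real work lies. The K\"onig-lemma extraction and the conclusions $0 \in \mathrm{supp}\, g_n$, $\ifi(g_n)=0$, $\deg g_n \to 0$ are fine, and the intended endgame is right: if one knew $\iii_k(g_i) = 0$ for all $i$ and all $k \ge k^*$, then every non-unit $g_i$ would contain a nonzero exponent bounded below by $\alpha_{k^*-1}$, contradicting $\deg g_i \to 0$ (this is the paper's Claims~1 and~4). But the balance $\iii_k(f_0) = \iii_k(f_n) + \sum_{i<n} \iii_k(g_i)$ from Lemma~\ref{lem:f indicator maps}(2) only gives this if all terms on the right are nonnegative, and controlling the possibly negative values of $\iii_k(f_n)$ is precisely the content of the paper's argument: in the case $\ifi(f)=0$ nonnegativity comes from Lemma~\ref{lem:f indicator ge 0 and eq 0}(1), and in the case $\ifi(f)=1$ it comes from showing that a fixed $\beta_k$ persists in $\mathrm{supp}\, f_i$ for all $i$ (Claim~2) followed by an induction (Claim~3). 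You flag this as ``the main obstacle'' but do not resolve it, and for a general ACCP-supported $f$ (e.g.\ with witness $s_0 = \beta_1 + 2\alpha_5$, which satisfies the ACCP in $M$, has $\ifi(s_0)=1$, but lies neither in $\langle A \rangle$ nor in $B$) your scheme provides no substitute; the paper sidesteps the general statement entirely by first removing the maximal common monomial $x^{\beta_{m_1}+\dots+\beta_{m_\ell}}$ and then proving the ACCP only for polynomials of the two special shapes ($\ifi(f)=0$, or $\ifi(f)=1$ with $B \cap \mathrm{supp}\, f \neq \emptyset$), which suffices. So as written the proposal has genuine gaps in both halves.
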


\begin{proof}
	Let $F$ be a field, and set $R := F[M]$. Fix any nonempty finite subset $S$ of~$R$. Since $\inf B > \frac12$, there exists a maximum $\ell \in \nn$ such that we can take $m_1, \dots, m_\ell$ with $x^{\beta_{m_1} + \dots + \beta_{m_\ell}} \mid_R f$ for all $f \in S$. Since $B \subseteq \mathcal{A}(M)$ (by Proposition~\ref{prop:M atomic}), the monomial $x^{\beta_{m_i}}$ is irreducible in $F[M]$ for every $i \in \ldb 1, \ell \rdb$, and so after replacing $S$ by $S/x^{\beta_{m_1} + \dots + \beta_{m_\ell}}$, we can assume that the monomial $x^{\beta_n}$ is not a common divisor of $S$ for any $n \in \nn$. %If there exists $\beta_i$ such that $x^{\beta_i} \mid_{R} f$ for all $f \in S$, then we divide every elements in $S$ by $x^{\beta_i}$. We repeat this process, it will end in finitely many steps, because $\beta_i > \frac12$ for all $i \in \nn_0$. Now we notice that each $x^{\beta_i}$ is an atom in $R$, so we can assume there does not exist $\beta_i$ such that $x^{\beta_i} \mid_R f$ for all $f \in S$. 
	
	We proceed to show that there exists a polynomial expression $f \in S$ such that $f$ satisfies the ACCP in $F[M]$. To do so, we first set
	\[
		t_0 := \min \{ \ifi(f) : f \in S \}.
	\]
	 Observe that $t_0 \in \{0,1\}$; this is because if $t_0 \ge 2$, then part~(2) of Lemma~\ref{lem:indicator infty} would guarantee the existence of $n \in \nn_0$ such that $x^{\beta_n}$ is a common divisor of $S$ in $F[M]$, which is not possible. We split the rest of the proof into two cases, and we will argue in each of the cases that there exists an element of $S$ satisfying the ACCP.
	\smallskip
	
	\textsc{Case 1:} $t_0 = 0$. Take $f \in S$ such that $\ifi(f) = 0$. We claim that $f$ satisfies the ACCP. Suppose, by way of contradiction, that $(f_i R)_{i \ge 0}$ is an ascending chain of principal ideals of $R$ that does not stabilize with $f_0 = f$. Furthermore, we can assume that $f_i/f_{i+1}$ is not a unit of $F[M]$ for any $i \in \nn_0$. Then from part~(1) of Lemma~\ref{lem:f indicator maps} we obtain that $(\ifi(f_i))_{i \ge 0}$ is a decreasing sequence. The equality $\ifi(f) = 0$ now implies that, for every $i \in \nn_0$,
    \[
        \ifi(f_i) = 0.
    \]
    As $\lim_{n \to \infty} q_n = \infty$, in light of part~(2) of Lemma~\ref{lem:f indicator ge 0 and eq 0} we can fix $n_0 \in \nn$ large enough so that $\deg f \le \frac{q_{n_0}}{100}$ and $\iii_n(f) = 0$ for every $n \in \nn$ with $n \ge n_0$, whence $f_i \in W_n$ for every $i,n \in \nn_0$ with $n \ge n_0$. Now, for each $i \in \nn_0$, the fact that $\ifi(f_i) = 0$, along with part~(1) of Lemma~\ref{lem:f indicator ge 0 and eq 0}, ensures that $\iii_n(f_i) \ge 0$ for every $n \ge n_0$. Thus, for each $n \ge n_0$, the sequence $(\iii_n(f_i))_{i \ge 0}$ is decreasing by part~(2) of Lemma~\ref{lem:f indicator maps}. Now, in light of part~(2) of Lemma~\ref{lem:f indicator ge 0 and eq 0}, after replacing $n_0$ by a larger integer, one can further assume that for all $i,n \in \nn_0$ with $n \ge n_0$,
    \[
        \iii_n(f_i) = 0.
    \]
 
    %As a consequence, for each $i \in \nn_0$, the fact that $\ifi(f_i) = 0$, along with part~(1) of Lemma~\ref{lem:f indicator ge 0 and eq 0}, ensures that $\iii_n(f_i) = 0 $ for every $n \in \nn$ with $n \ge n_0$ 
	\medskip

	\noindent \textsc{Claim 1.} $\deg f_i \ge \alpha_{n_0 - 1} + \deg f_{i+1}$ for every $i \in \nn_0$.
	\smallskip
	
	\noindent \textsc{Proof of Claim 1.} Fix $i \in \nn_0$, and then write $f_i = f_{i+1}g$ for some $g \in F[M]$. As $\ifi(f_{i+1}) = 0$, we see that $\ifi(g) = \ifi(g) + \ifi(f_{i+1}) \le \ifi(f_i) = 0$ by part~(1) of Lemma~\ref{lem:f indicator maps} and, therefore, the fact that $\ifi(g) \in \nn_0$ guarantees that $\ifi(g) = 0$. %\min\{\ifi(q) : q \in \text{supp} \, g\} = 0$. 
    Also, if $\max\{\ifi(q) : q \in \text{supp} \, g\} \ge 1$, then one of the elements in $\text{supp} \, g$ is divisible by some element of $B$ in~$M$ and the inclusion $B \subset \big(\frac12,1\big]$ implies that $\deg g > \frac12$, whence
	\begin{equation*}
		\deg f_i = \deg g + \deg f_{i+1} > \frac12 + \deg f_{i+1} > \frac1{100} + \deg f_{i+1} >\alpha_{n_0 - 1}  + \deg f_{i+1},
	\end{equation*}
	as desired. Therefore we can assume that $\ifi(s) = 0$ for every $s \in \text{supp} \, g$. This, together with the fact that $g \notin F[M]^\times$, ensures the existence of a nonzero element $q \in \text{supp} \, g$ such that $\ifi(q) = 0$. Hence it follows from part~(1) of Lemma~\ref{lem:f indicator ge 0 and eq 0} that $\iii_n(q) \ge 0$ when $n \ge n_0$. Moreover, observe that if $n \ge n_0$, then $\iii_n(q) = 0$ as, otherwise, $\iii_n(f_i) \ge \iii_n(g) \ge \iii_n(q) > 0$. Since $\ifi(q) = 0$, it follows that $q \in \langle A \rangle \subseteq N$, and so $q$ has a canonical decomposition as in~\eqref{eq:canonical lifting decomposition}, namely, 
    \[
        q = \nu(q) + \sum_{n \in \nn} c_n(q) \alpha_n,
	\]
	where $\nu(q) \in \nn_0$ while the assignment $n \mapsto c_n(q)$ defines a map of finite support with $c_n(q) \in \ldb 0, p_n - 1 \rdb$ for every $n \in \nn$. As $q \in \langle A \rangle$ and $q \le \deg g \le \frac{q_{n_0}}{100}$, it follows from part~(3) of Lemma~\ref{lem:indicator n} that $c_n(q) = \iii_n(q) = 0$ for every $n \ge n_0$. Thus, if $\nu(q) \neq 0$ then $q \ge 1$, and if $\nu(q) = 0$ then $q \ge \alpha_j$ for some $j \in \ldb 1, n_0 - 1 \rdb$. In any case, the fact that the sequence $(\alpha_n)_{n \ge 1}$ is decreasing with $1 > \alpha_1$ ensures that $q \ge \alpha_{n_0 - 1}$. Hence
	\begin{equation*}
		\deg f_i = \deg g + \deg f_{i+1} \ge q + \deg f_{i+1} \ge \alpha_{n_0 - 1}+\deg f_{i+1},
	\end{equation*}
	and so Claim~1 is established.
	\smallskip
	
	Now set $d := \left\lceil \frac{\deg f}{\alpha_{n_0 - 1}}\right\rceil +1$ and observe that, as a consequence of Claim~1, the following inequalities $\deg f_d \le \deg f - d \alpha_{n_0 - 1} \le - \alpha_{n_0 - 1} < 0$ hold, which contradicts the fact that $f_d$ is nonzero.
	% \begin{equation*}
	% 	\deg f_{\left\lfloor \frac{\deg f}{\alpha_{n_0 - 1}}\right\rfloor +1}\le \deg f - \left(  \right)\alpha_{n_0 - 1} < 0,
	% \end{equation*}
%	which leads to a contradiction.
	\medskip

    % \textcolor{red}{Part (3) of Lemma 5.6:} If $\ifi(b)=1$ for some $b \in M \setminus B$, then for all sufficiently large $n \in \nn$ we have $\beta_n \mid_M b$.
	
	\textsc{Case 2:} $t_0=1$. It follows from part~(2) (resp., part~(3)) of Lemma~\ref{lem:indicator infty} that for all $f \in S$ with $\ifi(f) \ge 2$ (resp., $\ifi(f) = 1$) and $s \in \text{supp} \, f$ (resp., $s \in \text{supp} \, f \setminus B$) the divisibility relation $\beta_n \mid_M s$ holds for all sufficiently large $n \in \nn$. This, along with the fact that no monomial of the form $x^{\beta_n}$ is a common divisor of $S$ in $F[M]$, guarantees the existence of $f \in S$ such that $B \cap \text{supp} \, f$ is a nonempty set. We will show that $f$ satisfies the ACCP. Assume, towards a contradiction, that $(f_nR)_{\ge 0}$ is an ascending chain of principal ideals of $R$ that does not stabilize such that $f_0 = f$. As in the previous case, we can further assume that $f_i/f_{i+1}$ is not a unit of $F[M]$ for any $i \in \nn_0$, and we can infer that $(\ifi(f_i))_{i \ge 0}$ is a decreasing sequence from part~(1) of Lemma~\ref{lem:f indicator maps}. Now observe that if there exists $i \in \nn$ such that $\ifi(f_i)=0$, then we can repeat for $f_i$ the argument given in Case~1 to obtain that~$f_i$ satisfies the ACCP. Therefore, given the fact that $\ifi(f_0) = 1$, we can assume $\ifi(f_i) = 1$ for every $i \in \nn_0$. Now fix $k \in \nn_0$ such that $\beta_k \in \text{supp} \, f$.
	\medskip
	
	\noindent \textsc{Claim 2.} $\beta_k \in \text{supp} \, f_n$ for every $n \in \nn_0$.
	\smallskip
	
	\noindent \textsc{Proof of Claim 2.} We proceed by induction on~$n$. The case $n = 0$ follows from our choice of $k$. Now fix $n \in \nn$ such that $\beta_k \in f_i$ for every $i < n$. Since $\beta_k$ is an atom of $M$, if $\beta_k \notin \text{supp} \, f_n$, then from the two facts $\beta_k \in \text{supp} \, f_{n-1}$ and $f_n \mid_R f_{n-1}$ we can infer that $0 \in \text{supp} \, f_n$, which implies that $\ifi(f_n) = 0$, a contradiction. Hence Claim~2 has been established.
	\smallskip

% \color{red}
%  \begin{lemma} \label{lem:f indicator ge 0 and eq 0}%\label{lem:certain f indicator n = 0} %\label{lem:any f indicator n = 0}
% 	For any $n \in \nn$ and $f \in W_n$, the following statements hold.
% 	\begin{enumerate}
% 		\item If $\ifi(f) = 0$, then $\iii_k(f) \ge 0$ for every $k \in \nn$ with $k \ge n$. %for all $n$ such that $\deg f \le q_n/100$.
% 		\smallskip
		
% 		\item %For each $f \in F[M]^*$, there 
% 		There exists $k_0 \in \nn$ such that $\iii_k(f) = 0$ for every $k \ge k_0$.
% 	\end{enumerate}
% \end{lemma}
% \smallskip
% \color{black}

	Now take $n_0 \in \nn_{\ge k+2}$ such that $\deg f \le \frac{q_{n_0}}{100}$ and $\iii_n(f) = 0$ for all $n \ge n_0$, which can be done by virtue of part~(2) of Lemma~\ref{lem:f indicator ge 0 and eq 0}.
	\medskip
	
	\noindent \textsc{Claim 3.} $\iii_n(f_i) = 0$ and $\iii_n(f_{i-1}/f_i) = 0$ for all $i,n \in \nn$ with $n \ge n_0$. %, and $\iii_n(f_{i-1}/f_i) = 0$ for all $i,n \in \nn$ with $n \ge n_0$.
	\smallskip
	
	\noindent \textsc{Proof of Claim 3.} Fix $n \in \nn$ with $n \ge n_0$, and let us proceed by induction on~$i$. The case $i = 0$ follows from assumption. Now fix $i \in \nn$ and assume that both equalities hold for any index in $\ldb 0, i-1 \rdb$. Write $f_{i-1} = f_i g$ for some $g \in F[M]$. Observe that $g \in W_{n_0}$ because $\deg g \le \deg f$. We want to show that $\iii_n(f_i) = \iii_n(g) = 0$. %for all $n \ge n_0$. 
	Because $1 = \ifi(f_{i-1}) = \ifi(f_i)$, it follows from part~(1) of Lemma~\ref{lem:f indicator maps} that $\ifi(g) = 0$. This, along with part~(1) of Lemma~\ref{lem:f indicator ge 0 and eq 0}, guarantees that $\iii_n(g)\ge 0$. Also, since $\beta_k\in \text{supp} \, f_i$, it follows that $\iii_n(f_i)\ge \iii_n(\beta_k) = 0$. %(from our choice of $n_0$). 
    Thus, part~(2) of Lemma~\ref{lem:f indicator maps}, together with our induction hypothesis, $\iii_n(f_i) + \iii_n(g) = \iii_n(f_{i-1}) = 0$, which in turn implies that $\iii_n(f_i) = \iii_n(g) = 0$. Hence Claim~3 has been established.
	\smallskip

 	\noindent \textsc{Claim 4.} $\deg f_i \ge \alpha_{n_0 - 1} + \deg f_{i+1}$ for every $i \in \nn_0$.
	\smallskip
	
	\noindent \textsc{Proof of Claim 4.} Fix $i \in \nn_0$. After writing $f_i = f_{i+1}g$ for some $g \in F[M]$, we can proceed as in the proof of Claim~1 to obtain the inequality $\deg f_i > \alpha_{n_0 - 1}  + \deg f_{i+1}$ and the existence of a nonzero element $q \in \text{supp} \, g$ such that $\ifi(q) = 0$, and so $q \in \langle A \rangle$. Now it follows from Claim~3 that $\iii_n(q) = 0$ for every $n \in \nn$ with $n \ge n_0$. Now we can proceed as we did in the proof of Claim~1 to complete our argument, so Claim~4 is established.
    \smallskip
 
	% \noindent \textsc{Claim 4.} For each $i \in \nn_0$, the inequality $\deg f_i \ge \alpha_{n_0 - 1} + \deg f_{i+1}$ holds.
	% \smallskip
	
	% \noindent \textsc{Proof of Claim 4.} \textcolor{red}{[TODO: I'll try to merge this claim with claim 1.]} Fix $i \in \nn_0$, and then write $f_i = f_{i+1}g$ for some $g \in F[M]$. Then $\ifi(g) = 0$. In addition, if $\max\{\ifi(b) : b \in \text{supp} \, g\} \ge 1$, then $\deg g > \frac12$, whence
	% \begin{equation*}
	% 	\deg f_i = \deg g + \deg f_{i+1} > \frac12 + \deg f_{i+1} > \frac1{100} + \deg f_{i+1} > \alpha_{n_0 - 1} + \deg f_{i+1}.
	% \end{equation*}
	% As a result, we can assume that $\ifi(b) = 0$ for every $b \in \text{supp} \, g$ and, therefore, we can pick a nonzero element $b \in \text{supp} \, g \cap \langle A \rangle$ because $g$ does not belong to $F[M]^\times$. From Claim~3 we see $\iii_n(b) = 0$ for every $n \in \nn$ with $n \ge n_0$. Thus, if we write $q = c_1 \alpha_1 + \dots + c_\ell \alpha_\ell$ for some $c_1, \dots, c_\ell \in \nn_0$, we obtain that $c_j = 0$ for every index $j \in \ldb n_0, \ell \rdb$ as, otherwise, $\iii_j(q) = c_j \ge 1$. Hence $q \ge \alpha_j$ for some $j \in \ldb 1, n_0 - 1 \rdb$. This, along with the fact that $(\alpha_n)_{n \ge 1}$ is a decreasing sequence, ensures that $q \ge \alpha_{n_0 - 1}$. Now
	% \begin{equation*}
	% 	\deg f_i = \deg g + \deg f_{i+1} \ge b + \deg f_{i+1} \ge \alpha_{n_0 - 1}+\deg f_{i+1},
	% \end{equation*}
	% and so Claim~4 is established.
	% \smallskip

    With Claim~4 playing the role of Claim~1, we can produce the desired contradiction by mimicking the last paragraph of Case~1, and this completes our proof.
     %
	% It now follows from Claim~4 that if $d := \left\lfloor \frac{\deg f}{\alpha_{n_0 - 1}}\right\rfloor +1$, then $\deg f_d \le \deg f - d \alpha_{n_0 - 1} < 0$; however, this contradicts the fact that $f_d$ is nonzero.
%		
%	Therefore
%	\begin{equation*}
%		\deg f_{\left\lfloor \frac{\deg f}{\alpha_{n_0 - 1}}\right\rfloor +1} \le \deg f - \left( \left\lfloor \frac{\deg f}{\alpha_{n_0 - 1}}\right\rfloor + 1 \right)\alpha_{n_0 - 1} < 0,
%	\end{equation*}
%	which leads to a contradiction.
\end{proof}

We conclude with the following corollary, explicitly adding a class of one-dimensional monoid algebras to the constructions of atomic domains not satisfying the ACCP given in~\cite{aG74,aZ82,mR93,BC19,GL23,GL23a}.

\begin{cor}
    For each field $F$, the monoid algebra $F[M]$ is an atomic domain that does not satisfy the ACCP.
\end{cor}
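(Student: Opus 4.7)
The plan is to extract both conclusions directly from results already established in this section and in the background. Atomicity of $F[M]$ will follow at once from Theorem~\ref{thm:rank 1 half-ACCP}, which asserts that $F[M]$ satisfies the almost ACCP, together with the observation recorded at the end of Section~\ref{sec:background} (quoting \cite[Proposition~3.10(2)]{GL23a}) that every monoid satisfying the almost ACCP is atomic. Applying this to the multiplicative monoid $F[M]^*$ gives the first half of the statement without further work.

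For the failure of the ACCP in $F[M]$, I would lift the explicit non-stabilizing chain exhibited in the proof of Proposition~\ref{prop:M atomic}(2). That proof uses the identity $2\beta_n = 2\beta_{n+1} + 2\alpha_{n+1}$, which holds in $M$ for every $n \in \nn_0$, to produce a non-stabilizing ascending chain of principal ideals of $M$. Translating this identity to the monoid algebra via the exponential assignment gives the equality
\[
    x^{2\beta_n} = x^{2\beta_{n+1}} \cdot x^{2\alpha_{n+1}}
\]
in $F[M]$. Since $M$ is a (nontrivial) Puiseux monoid and is therefore reduced, the description $F[M]^\times = \{u x^m : u \in F^\times,\ m \in M^\times\}$ recorded in Section~\ref{sec:background} forces $M^\times = \{0\}$ and hence $x^{2\alpha_{n+1}} \notin F[M]^\times$ for every $n \in \nn_0$. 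Consequently $\bigl( x^{2\beta_n} F[M] \bigr)_{n \ge 0}$ is an ascending chain of principal ideals of $F[M]$ that does not stabilize.

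There is no real obstacle in this argument; the only point to verify is the nonunit status of each $x^{2\alpha_{n+1}}$, which, as just noted, reduces to the reducedness of $M$ and the standard description of units in a monoid algebra over a torsion-free monoid. No additional machinery beyond what has already been developed in Sections~\ref{sec:background} and~\ref{sec:another almost ACCP monoid algebra} is required.
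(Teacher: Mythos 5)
Your proposal is correct. The atomicity half is exactly the intended argument: Theorem~\ref{thm:rank 1 half-ACCP} gives the almost ACCP for $F[M]$, and the fact recorded in Section~\ref{sec:background} (via \cite[Proposition~3.10(2)]{GL23a}) that the almost ACCP implies atomicity, applied to $F[M]^*$, finishes it. For the failure of the ACCP, the route the paper sets up is slightly different: Proposition~\ref{prop:M atomic}(2) shows that $M$ itself fails the ACCP, and then one invokes the transfer result quoted in the introduction, namely that for a reduced, cancellative, torsion-free monoid $M$ and any field $F$, the monoid $M$ satisfies the ACCP if and only if $F[M]$ does \cite[Theorem~13]{AJ15}. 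You instead lift the explicit witness by hand: from $2\beta_n = 2\beta_{n+1} + 2\alpha_{n+1}$ you get $x^{2\beta_n} = x^{2\beta_{n+1}} x^{2\alpha_{n+1}}$, and since $M$ is a reduced torsion-free (Puiseux) monoid, the unit description $F[M]^\times = \{u x^m : u \in F^\times,\ m \in M^\times\}$ shows $x^{2\alpha_{n+1}}$ is a nonunit, so (using cancellation in the domain $F[M]$) the chain $\bigl(x^{2\beta_n} F[M]\bigr)_{n \ge 0}$ is ascending and does not stabilize. Note that $2\alpha_{n+1} \in A_2 \subseteq M$, so the monomial $x^{2\alpha_{n+1}}$ really lies in $F[M]$; with that observed, your argument is complete. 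The trade-off is minor: citing \cite{AJ15} is shorter and reuses Proposition~\ref{prop:M atomic}(2) verbatim, while your direct lifting is self-contained and avoids an external transfer theorem.
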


\bigskip
%%%%%%%%%%%%%%%
%%%%%%%%%%%%%%%
\section*{Acknowledgments}

This collaboration took place as part of PRIMES-USA at MIT, and the authors would like to thank the directors and organizers of the program for making this research experience possible. While working on this paper, the second author was kindly supported by the NSF awards DMS-1903069 and DMS-2213323.

\bigskip
%%%%%%%%%%%%%%
%%%%%%%%%%%%%%

\end{document}